\documentclass{amsart2020}
\newtheorem{theorem}{Theorem}[section]
\newtheorem{lemma}[theorem]{Lemma}
\newtheorem{proposition}[theorem]{Proposition}
\newtheorem{corollary}[theorem]{Corollary}

\theoremstyle{definition}
\newtheorem{definition}[theorem]{Definition}
\newtheorem{example}[theorem]{Example}
\newtheorem{remark}[theorem]{Remark}
\usepackage{amscd,amssymb}
\begin{document}

\title[Free bicommutative superalgebras]{Free bicommutative superalgebras}

\author[Drensky, Ismailov, Mustafa, Zhakhayev]{Vesselin Drensky, Nurlan Ismailov, Manat Mustafa, Bekzat Zhakhayev}
\address{Institute of Mathematics and Informatics, Bulgarian Academy of Sciences, Sofia 1113, Bulgaria}
\email{drensky@math.bas.bg}
\address{Astana IT University, Astana 010000, Kazakhstan and Suleyman Demirel University, Kaskelen, Kazakhstan}
\email{nurlan.ismail@gmail.com}
\address{Department of Mathematics, School of Sciences and Humanities, Nazarbayev University, 53 Qabanbay Batyr Avenue, Astana 010000, Kazakhstan}
\email{manat.mustafa@nu.edu.kz}
\address{Suleyman Demirel University and Institute of Mathematics and Mathematical Modeling, Almaty, Kazakhstan}
\email{bekzat.kopzhasar@gmail.com}

\subjclass[2020]{17A70, 17A30, 17A50, 17A61}

\keywords{Bicommutative superalgebras, free bicommutative superalgebras, Hilbert series, Hilbert Basissatz, Gr\"obner-Shirshov basis, Specht property, codimensions, cocharacters}

\thanks{This research was supported by the Science Committee of the Ministry of Science and Higher Education of the Republic of Kazakhstan (Grant No. AP09259551)
and by Nazarbayev University Faculty Development Competitive Research Grant 021220FD3851.}

\maketitle

\begin{abstract}
We introduce the variety ${\mathfrak B}_{\text{\rm sup}}$ of bicommutative superalgebras over an arbitrary field of characteristic different from 2.
The variety consists of all nonassociative ${\mathbb Z}_2$-graded algebras satisfying the polynomial super-identities of super- left- and right-commutativity
\[
x(yz)= (-1)^{\overline{x}\,\overline{y}} y(xz)\text{ and } (xy)z=(-1)^{\overline{y}\,\overline{z}} (xz)y,
\]
where $\overline{u}\in\{0,1\}$ is the parity of the homogeneous element $u$.

We present an explicit construction of the free bicommutative superalgebras, find their bases as vector spaces
and show that they share many properties typical for ordinary bicommutative algebras and super-commutative associative superalgebras.
In particular, in the case of free algebras of finite rank we compute the Hilbert series and find explicitly its coefficients.
As a consequence we give a formula for the codimension sequence. We establish an analogue of the classical Hilbert Basissatz for two-sided ideals.
We see that the Gr\"obner-Shirshov bases of these ideals are finite,
the Gelfand-Kirillov dimensions of finitely generated bicommutative superalgebras are nonnegative integers
and the Hilbert series of finitely generated graded bicommutative superalgebras are rational functions.
Concerning problems studied in the theory of varieties of algebraic systems, we prove that the variety of bicommutative superalgebras satisfies the Specht property.
In the case of characteristic 0 we compute the sequence of cocharacters.
\end{abstract}

\section{Introduction}

\subsection{Bicommutative algebras}
An algebra over a field $K$ belongs to the variety $\mathfrak B$ of {\it bicommutative} algebras if it satisfies the identities of
{\it left-commutativity} and {\it right-commutativity}, respectively,
\[
x_1(x_2x_3)=x_2(x_1x_3)\text{ and }(x_1x_2)x_3=(x_1x_3)x_2.
\]
Bicommutative algebras were first considered by Dzhumadil'daev and Tulenbaev in \cite{Dzhumadil'daev-Tulenbaev}.
Bicommutative algebras arise also in the study by Burde, Dekimpe, Deschamps and Vercammen of simply transitive affine actions on Lie groups, see \cite{Burde1}, \cite{Burde2}, \cite{Burde3}.
In the latter three papers, bicommutative algebras are called LR- (left and right) algebras.
Bases of free bicommutative algebras and some numerical invariants of bicommutative algebras were given by Dzhumadil'daev, Ismailov and Tulenbaev in \cite{Dzh-Ism-Tul}.
Polynomial identities satisfied by bicommutative algebras with respect to the commutator and anti-commutator products and Lie and Jordan elements
were studied by Dzhumadil'daev and Ismailov in \cite{Dzhumadil'daev-Ismailov2018}.
It was proved by Drensky and Zhakhayev in \cite{Drensky-Zhakhaev} that finitely generated bicommutative algebras are weakly noetherian
and the authors gave a positive solution of the Specht problem for varieties of bicommutative algebras over an arbitrary field of any characteristic.
The subvarieties of the variety $\mathfrak B$ of bicommutative algebras were studied by Drensky in \cite{Drensky2}.
In \cite{Shestakov-Zhang2021}, Shestakov and Zhang studied automorphisms of free bicommutative algebras over  an arbitrary field
and constructed wild automorphisms in two-generated and three-generated free bicommutative algebras.
Bai, Chen and Zhang showed in \cite{Bai-Chen-Zhang2022} that the Gelfand-Kirillov dimension of an arbitrary finitely generated bicommutative algebra is a nonnegative integer.
They also established in \cite{Bai-Chen-Zhang2022} that the ideals of finitely generated free bicommutative algebras $F_d({\mathfrak B})$ have a finite Gr\"obner-Shirshov bases.
The algebraic and geometric classifications of bicommutative algebras in low dimensions were studied
by Drensky \cite{Drensky2},  Kaygorodov and Volkov \cite{Kaygorodov-Volkov2019},  Kaygorodov, P\'aez-Guill\'an and Voronin \cite{Kaygorodov-Voronin2020}, \cite{Kaygorodov-Voronin2021}.

\subsection{Bicommutative superalgebras}
In this paper $K$ is an arbitrary field of characteristic different from 2. We introduce the class of {\it bicommutative superalgebras} over $K$.
The algebra $A$ is a {\it superalgebra} if it is $\mathbb{Z}_2$-graded,
i.e. $A=A_0+A_1$ is a direct sum of its subspaces $A_0$ and $A_1$ such that $A_iA_j\subseteq A_{i+j\,(\text{mod}\,2)}$.
The subspaces $A_0$ and $A_1$ are called the {\it even} and the {\it odd} components of $A$, respectively,
and their elements are called {\it even} and {\it odd}, respectively.
In what follows we always assume that the considered elements are homogeneous, i.e. they are either even or odd.
For an element $a\in A_i$, $i\in \{0,1\}$, we write $\overline{a}=i$ for its parity.
A superalgebra $A=A_0+A_1$  with super-identities
\begin{equation}\label{superleft}
x_1(x_2x_3)= (-1)^{\overline{x_1}\,\overline{x_2}} x_2(x_1x_3),
\end{equation}
\begin{equation}\label{superright}
(x_1x_2)x_3=(-1)^{\overline{x_2}\,\overline{x_3}} (x_1x_3)x_2
\end{equation}
is called a {\it bicommutative} superalgebra. The super-identity (\ref{superleft}) is called {\it super-left-commutative}
and the super-identity (\ref{superright}) is called {\it super-right-commutative}.
We denote by ${\mathfrak B}_{\text{\rm sup}}$ the variety of all bicommutative superalgebras.

For a variety $\mathfrak V$ of algebras one of the first problems to solve is to describe the free algebra $F({\mathfrak V})$ of countable rank
and the free algebra $F_d({\mathfrak V})$ of finite rank $d$.
For the description of $F({\mathfrak V})$ and $F_d({\mathfrak V})$ it is important to find an explicit basis as a vector space and a nice multiplication rule
which allows to study combinatorial aspects related with all algebras in $\mathfrak V$.
In the general case it is not always easy to solve these problems but for several well-known varieties of algebras the description of the free algebras is well understood.
Such examples include free associative algebras, Lie algebras, (anti)commutative algebras, and bicommutative algebras.
For a variety ${\mathfrak V}_{\text{\rm sup}}$ of superalgebras one has to describe the free algebra $F({\mathfrak V}_{\text{\rm sup}})$
generated by countable sets of even and odd variables and the algebra $F_{p,q}({\mathfrak V}_{\text{\rm sup}})$ generated by $p$ even and $q$ odd variables.
Such a description is given for example for free Lie superalgebras by Shtern \cite{Shtern1986}
and Bokut, Kang, Lee and Malcolmson \cite{Bokut1999}, free right-symmetric superalgebras by Vasilieva and Mikhalev \cite{Vasilieva-Mikhalev1996},
Malcev superalgebras on one odd generator by Shestakov \cite{Shestakov2003}, free alternative superalgebras on one odd generator by Shestakov and Zhukavets \cite{Shestakov-Zhukavets2007},
free Novikov (or Gelfand-Dorfman-Novikov) superalgebras by Zhang, Chen and Bokut \cite{Zhang-Chen-Bokut2019}.

We present an explicit construction of the free bicommutative superalgebras $F({\mathfrak B}_{\text{\rm sup}})$ and $F_{p,q}({\mathfrak B}_{\text{\rm sup}})$,
and find their bases as vector spaces. Comparing the situation with the ordinary bicommutative case, Dzhumadil'daev and Tulenbaev noted in \cite{Dzhumadil'daev-Tulenbaev},
that if $A$ is a bicommutative algebra, then $A^2$ is commutative and associative. In the super-case, we show that $A^2$ is an associative and super-commutative superalgebra.
We show that bicommutative superalgebras share many properties typical for ordinary bicommutative algebras and super-commutative associative superalgebras
in the spirit of the results of Drensky and Zhakhayev in \cite{Drensky-Zhakhaev}.
In particular, in the case of free algebras of finite rank we compute the Hilbert series and find explicitly their coefficients.
As a consequence we give a formula for the codimension sequence.
We establish an analogue of the classical Hilbert Basissatz for two-sided ideals.
We see that the Gr\"obner-Shirshov bases of the ideals in $F_{p,q}({\mathfrak B}_{\text{\rm sup}})$ are finite.
As a consequence we obtain that the Gelfand-Kirillov dimension of a finitely generated bicommutative superalgebra is a nonnegative integer
and the Hilbert series of finitely generated graded bicommutative superalgebras are rational functions.
This is an analogue of results by Bai, Chan and Zhang in \cite{Bai-Chen-Zhang2022} but the methods are in the spirit of \cite{Drensky-Zhakhaev}.

Then we consider problems typical for the theory of varieties of algebraic systems.
Using the Higman-Cohen method \cite{Higman} and \cite{Cohen} we prove
that the variety ${\mathfrak B}_{\text{\rm sup}}$ of bicommutative superalgebras over an arbitrary field $K$ satisfies the Specht property
which is a super-analogue of a result in \cite{Drensky-Zhakhaev}.
This is a consequence of a stronger result for Gr\"obner-Shirshov bases of ideals invariant under semigroups of endomorphisms
in the spirit of the results by Drensky and La Scala \cite{Drensky-La Scala}.
In the case of characteristic 0 we compute the sequences of cocharacters as in the paper by Dzhumadil'daev, Ismailov and Tulenbaev \cite{Dzh-Ism-Tul}
but again the proofs are as in \cite{Drensky-Zhakhaev}.

\section{Bases of free bicommutative superalgebras}

\subsection{Free bicommutative algebras}
Recall that the following basis of the free bicommutative algebra $F({\mathfrak B})$ was constructed in
\cite{Dzh-Ism-Tul}. Let $F({\mathfrak B})$ be freely generated by the set $X$ with a well-ordering $\leq$.
Suppose that $x_1,\ldots,x_k,y_1,\ldots,y_l\in X$ with $x_1\leq\cdots\leq x_k$ and $y_1\leq\cdots\leq y_l$ with $k,l\geq 1$.
Then set of elements of the form
\begin{equation}\label{basis of algebra}
x_1(\cdots(x_{k-1}((\cdots((x_ky_1)y_2)\cdots)y_l))\cdots)
\end{equation}
together with the free generating set $X$ forms a basis of $F({\mathfrak B})$.
It was shown in \cite{Drensky-Zhakhaev} that the algebra $F({\mathfrak B})$ is isomorphic to the following bicommutative algebra.
Let $K[X]$ be the usual polynomial algebra and let $\omega(K[X])$ be the augmentation ideal of $K[X]$
(the set of polynomials without constant term).
The algebra $G(X)$ is a direct sum of the vector spaces $KX$ with basis $X$ and $\omega(K[X])\otimes_K\omega(K[X])$ with multiplication rule
\[
(KX+\omega(K[X])\otimes\omega(K[X]))\circ (KX+\omega(K[X])\otimes\omega(K[X]))\to\omega(K[X])\otimes\omega(K[X])
\]
defined by
\[
x\circ y=x\otimes y,\quad x,y\in X,
\]
\[
x\circ(u(X)\otimes v(X))=(xu(X))\otimes
v(X),(u(X)\otimes v(X))\circ x=u(X)\otimes(v(X)x),
\]
$x\in X$, $u(X),v(X)\in\omega(K[X])$,
\[
(u_1(X)\otimes v_1(X))\circ(u_2(X)\otimes v_2(X))=(u_1(X)u_2(X))\otimes(v_1(X)v_2(X)),
\]
$u_1(X),u_2(X),v_1(X),v_2(X)\in\omega(K[X])$. The isomorphism $F({\mathfrak B})\cong G(X)$ is defined by
\[
x\to x\text{ for }x\in X, \] \[ x_1(\cdots(x_{k-1}((\cdots((x_ky_1)y_2)\cdots)y_l))\cdots)\to x_1\cdots
x_{k-1}x_k\otimes y_1y_2\cdots y_l
\]
for $x_1(\cdots(x_{k-1}((\cdots((x_ky_1)y_2)\cdots)y_l))\in F^2({\mathfrak B})$.

\subsection{Prebases of free bicommutative superalgebras}
We shall find similar constructions in the super-case. In what follows we shall work in the free bicommutative superalgebra $F({\mathfrak B}_{\text{\rm sup}})$
with a set of free generators $X=Y\cup Z$, where $Y=X_0$ and $Z=X_1$ are the well-ordered sets of even and odd generators, respectively.
Define a set $M$ consisting of monomials of the form
\begin{equation}\label{basis of superalgebra}
y_1(\cdots(y_k(z_1(\cdots(z_{l-1}((\cdots(((\cdots(z_ly_{k+1})\cdots)y_{k+m})z_{l+1})\cdots)z_{l+n}))\cdots)))\cdots)
\end{equation}
with
\[
 y_1\leq\cdots\leq y_k, \, y_{k+1}\leq\cdots\leq y_{k+m},\, z_1<\cdots<z_l, \,z_{l+1}<\cdots<z_{l+n},
\]
where $y_1,\ldots, y_{k+m}\in Y$, $z_1,\ldots, z_{l+n}\in Z$ and $k+l>0,  m+n>0$.
We shall show that the set $X\cup M$ forms a basis of the vector space $F({\mathfrak B}_{\text{\rm sup}})$
and shall find rules for the multiplication of the elements in $X$ and $M$.

First, in a series of lemmas we shall show that the set $X\cup M$ is a pre-basis for $F({\mathfrak B}_{\text{\rm sup}})$,
i.e. $X\cup M$ is the spanning set for the free bicommutative superalgebra $F({\mathfrak B}_{\text{\rm sup}})$.

\begin{lemma}\label{Lemma 2.1}
The variety ${\mathfrak B}_{\text{\rm sup}}$ satisfies the super-identities
\begin{equation}\label{weak-associativity}
x_1((x_2x_3)x_4)=(x_1(x_2x_3))x_4;
\end{equation}
\begin{equation}\label{two odd in left-box}
x_2((\cdots((x_1x_3)x_4)\cdots)x_k)=(-1)^{\overline{x_1}\,\overline{x_2}}x_1((\cdots((x_2x_3)x_4)\cdots)x_k);
\end{equation}
\begin{equation}\label{cor two odd in left-box}
z((\cdots((zx_1)x_2)\cdots)x_k)=0,\, z\in Z;
\end{equation}
\begin{equation}\label{two odd in left}
\begin{split}
&x_2(x_k(\cdots(x_4(x_1x_3))\cdots))\\
&=(-1)^{(\overline{x_4}+\cdots+\overline{x_k})(\overline{x_1}+\overline{x_2})+\overline{x_1}\,\overline{x_2}}x_1(x_k(\cdots(x_4(x_2x_3))\cdots));
\end{split}
\end{equation}
\begin{equation}\label{cor two odd in left}
z(x_k(\cdots(x_2(zx_1))\cdots))=0,\, z\in Z;
\end{equation}
\begin{equation}\label{two odd in right}
\begin{split}
&((\cdots((x_3x_1)x_4)\cdots)x_k)x_2\\
&=(-1)^{(\overline{x_4}+\cdots+\overline{x_k})(\overline{x_1}+\overline{x_2})+\overline{x_1}\,\overline{x_2}}((\cdots((x_3x_2)x_4)\cdots)x_k)x_1;
\end{split}
\end{equation}
\begin{equation}\label{cor two odd in right}
((\cdots((x_1z)x_2)\cdots)x_k)z=0,\,z\in Z.
\end{equation}
\end{lemma}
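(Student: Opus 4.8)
The plan is to derive all seven super-identities from the two defining super-identities (\ref{superleft}) and (\ref{superright}), with (\ref{weak-associativity}) serving as the fundamental ingredient from which (\ref{two odd in left-box})--(\ref{cor two odd in right}) follow by short arguments. Throughout it is convenient to read (\ref{superleft}) as the statement that left multiplication operators super-commute, $L_aL_b=(-1)^{\overline{a}\,\overline{b}}L_bL_a$ on any homogeneous element, and dually to read (\ref{superright}) as $R_aR_b=(-1)^{\overline{a}\,\overline{b}}R_bR_a$; these reformulations will be used repeatedly.

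To prove (\ref{weak-associativity}) I would rewrite $x_1((x_2x_3)x_4)$ by a chain of four applications of the axioms: first (\ref{superright}) inside the bracket, turning it into $(-1)^{\overline{x_3}\,\overline{x_4}}x_1((x_2x_4)x_3)$; then (\ref{superleft}) to pull $x_1$ inward, reaching $(-1)^{\overline{x_1}(\overline{x_2}+\overline{x_4})}(x_2x_4)(x_1x_3)$; then (\ref{superright}) on the outer product, reaching $(-1)^{\overline{x_4}(\overline{x_1}+\overline{x_3})}(x_2(x_1x_3))x_4$; and finally (\ref{superleft}) on the subword $x_2(x_1x_3)$, replacing it by $(-1)^{\overline{x_1}\,\overline{x_2}}x_1(x_2x_3)$. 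The four exponents add up to $2(\overline{x_1}\,\overline{x_2}+\overline{x_1}\,\overline{x_4}+\overline{x_3}\,\overline{x_4})\equiv 0\pmod 2$, so $x_1((x_2x_3)x_4)=(x_1(x_2x_3))x_4$. I expect this to be the only genuine obstacle of the lemma: one has to choose the rewriting path so that it closes up, and the naive attempt of re-deriving one auxiliary relation from another by relabeling the variables only yields a tautology; the point is that the last step must apply (\ref{superleft}) directly to the internal subword $x_2(x_1x_3)$ rather than to the whole expression.

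Granting (\ref{weak-associativity}), identity (\ref{two odd in left-box}) is proved by induction on $k$, the case $k=3$ being exactly (\ref{superleft}). In the inductive step one writes the left-hand bracket as $x_2(wx_k)$, where $w$ is the shorter right-normed word and is itself a product, so (\ref{weak-associativity}) gives $x_2(wx_k)=(x_2w)x_k$; the inductive hypothesis rewrites $x_2w$ with the sign $(-1)^{\overline{x_1}\,\overline{x_2}}$, and a second use of (\ref{weak-associativity}) reabsorbs $x_k$ inside the left bracket. Identity (\ref{cor two odd in left-box}) follows at once: specialising in (\ref{two odd in left-box}) the two interchangeable variables to one and the same odd generator $z$ makes the word equal to $(-1)^{1}$ times itself, hence zero because $2\neq 0$ in $K$.

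The remaining two nontrivial identities (\ref{two odd in left}) and (\ref{two odd in right}) are treated symmetrically to one another using the super-commutation of the operators $L$, respectively $R$, recorded in the first paragraph. For (\ref{two odd in left}) one moves $L_{x_2}$ past $L_{x_4},\ldots,L_{x_k}$ down to the innermost product $x_1x_3$ (producing the sign $(-1)^{\overline{x_2}(\overline{x_4}+\cdots+\overline{x_k})}$), applies (\ref{superleft}) to rewrite $x_2(x_1x_3)$ as $(-1)^{\overline{x_1}\,\overline{x_2}}x_1(x_2x_3)$, and then moves $L_{x_1}$ back to the front (sign $(-1)^{\overline{x_1}(\overline{x_4}+\cdots+\overline{x_k})}$); the three contributions combine into the asserted coefficient $(-1)^{(\overline{x_1}+\overline{x_2})(\overline{x_4}+\cdots+\overline{x_k})+\overline{x_1}\,\overline{x_2}}$. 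Identity (\ref{two odd in right}) is the mirror image: move $R_{x_2}$ inward, apply (\ref{superright}) to $(x_3x_1)x_2$, and move $R_{x_1}$ outward. Finally (\ref{cor two odd in left}) and (\ref{cor two odd in right}) drop out of (\ref{two odd in left}) and (\ref{two odd in right}) by putting the two interchangeable variables equal to the same odd $z$: since $\overline{x_1}+\overline{x_2}=0$ the coefficient becomes $(-1)^{0+1}=-1$, and again $2\neq 0$ in $K$ forces the word to vanish.
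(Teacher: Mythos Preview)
Your proof is correct and follows essentially the same route as the paper's: the same four-step rewriting chain for (\ref{weak-associativity}) with the same sign bookkeeping, the same use of (\ref{weak-associativity}) to reduce (\ref{two odd in left-box}) to (\ref{superleft}) (you phrase it as an induction, the paper applies (\ref{weak-associativity}) repeatedly in one pass, but the content is identical), and the same operator-commutation arguments for (\ref{two odd in left}) and (\ref{two odd in right}) with the corollaries obtained by the $z=z$ specialisation and $\mathrm{char}\,K\neq 2$.
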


\begin{proof}
By the super-identities (\ref{superleft}) and (\ref{superright})  we have
\[
x_1((x_2x_3)x_4)=(-1)^{\overline{x_3}\,\overline{x_4}}x_1((x_2x_4)x_3)=(-1)^{\overline{x_3}\,\overline{x_4}+\overline{x_1}\,\overline{x_2 x_4}}(x_2x_4)(x_1x_3)
\]
\[
=(-1)^{\overline{x_3}\,\overline{x_4}+\overline{x_1}\,\overline{x_2x_4}+\overline{x_4}\,\overline{x_1x_3}}(x_2(x_1x_3))x_4
\]
\[
=(-1)^{\overline{x_3}\,\overline{x_4}+\overline{x_1}\,\overline{x_2
x_4}+\overline{x_4}\,\overline{x_1 x_3}+\overline{x_1}\,\overline{x_2}}(x_1(x_2x_3))x_4
\]
\[
=(-1)^{\overline{x_3}\,\overline{x_4}+\overline{x_1}\,\overline{x_2}+\overline{x_1}\,
\overline{x_4}+\overline{x_1}\,\overline{x_4}+\overline{x_3}\,\overline{x_4}+\overline{x_1}\,\overline{x_2}}(x_1(x_2x_3))x_4
=(x_1(x_2x_3))x_4
\]
and this proves (\ref{weak-associativity}).
\[
x_{k+2}((\cdots((x_{k+1}x_1)x_2)\cdots)x_k)
\]
(by (\ref{weak-associativity}))
\[
=(\cdots((x_{k+2}(x_{k+1}x_1))x_2)\cdots)x_k
\]
(by (\ref{superleft}))
\[
=(-1)^{\overline{x_{k+1}}\,\overline{x_{k+2}}}(\cdots((x_{k+1}(x_{k+2}x_1))x_2)\cdots)x_k
\]
(by (\ref{weak-associativity}))
\[
=(-1)^{\overline{x_{k+1}}\,\overline{x_{k+2}}}x_{k+1}((\cdots((x_{k+2}x_1)x_2)\cdots)x_k)
\]
and this is (\ref{two odd in left-box}).

The super-identity (\ref{cor two odd in left-box}) follows immediately from (\ref{two odd in left-box}).

For the proof of (\ref{two odd in left}) we use the super-left-commutativity (\ref{superleft}):
\[
x_2(x_k(\cdots(x_4(x_1x_3))\cdots))
=(-1)^{(\overline{x_4}+\cdots+\overline{x_k})\overline{x_2}}x_k(\cdots(x_4(x_2(x_1x_3)))\cdots)
\]
\[
=(-1)^{(\overline{x_4}+\cdots+\overline{x_k})\overline{x_2}+\overline{x_1}\,\overline{x_2}}x_k(\cdots(x_4(x_1(x_2x_3)))\cdots)
\]
\[
(-1)^{(\overline{x_4}+\cdots+\overline{x_k})(\overline{x_1}+\overline{x_2})+\overline{x_1}\,\overline{x_2}}x_1(x_k(\cdots(x_4(x_2x_3))\cdots)).
\]

The super-identity (\ref{cor two odd in left}) is an immediate consequence of (\ref{two odd in left}).

For the proof of (\ref{two odd in right}) we use the super-right-commutativity (\ref{superright}) and obtain consecutively
\[
((\cdots((x_3x_1)x_4)\cdots)x_k)x_2
=(-1)^{(\overline{x_4}+\cdots+\overline{x_k})\overline{x_2}}(\cdots(((x_3x_1)x_2)x_4)\cdots)x_k
\]
\[
=(-1)^{(\overline{x_4}+\cdots+\overline{x_k})\overline{x_2}+\overline{x_1}\,\overline{x_2}}(\cdots(((x_3x_2)x_1)x_4)\cdots)x_k
\]
\[
=(-1)^{(\overline{x_4}+\cdots+\overline{x_k})(\overline{x_1}+\overline{x_2})+\overline{x_1}\,\overline{x_2}}((\cdots((x_3x_2)x_4)\cdots)x_k)x_1.
\]

The equation (\ref{cor two odd in right}) is an immediate consequence of (\ref{two odd in right}).
\end{proof}

\begin{corollary}\label{permuting the variables}
Let $\sigma$ be a permutation in the symmetric group $S_{k+l}$,
and let $x_1=y_1,\ldots,x_k=y_k\in Y$, $x_{k+1}=z_1,\ldots,x_{k+l}=z_l\in Z$, $u\in F({\mathfrak B}_{\text{\rm sup}})$. Then
\[
x_{\sigma(1)}(\cdots(x_{\sigma(k+l)}u)\cdots)=(-1)^{\sigma_1}y_1(\cdots(y_k(z_1(\cdots(z_{l-1}(z_lu))\cdots)))\cdots),
\]
\[
(\cdots(ux_{\sigma(1)})\cdots)x_{\sigma(k+l)}=(-1)^{\sigma_1}(\cdots(((\cdots((uy_1)y_2)\cdots)y_k)z_1)\cdots)z_l,
\]
where $(-1)^{\sigma_1}$ is the sign of the part $(\sigma(k+1),\ldots,\sigma(k+l))$ of the permutation $\sigma$
which shows the number of inversions between the odd variables $z_1,\ldots,z_l$.
\end{corollary}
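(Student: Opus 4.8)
The plan is to obtain both identities by iterating super-left-commutativity (\ref{superleft}) (for the first) and super-right-commutativity (\ref{superright}) (for the second) as an ``adjacent swap'' of two consecutive variables inside a left-normed, respectively right-normed, product, and then to identify the resulting sign by a short invariance argument. Since both sides of each identity are $K$-linear in $u$ and $F({\mathfrak B}_{\text{\rm sup}})$ is $\mathbb{Z}_2$-graded, I may assume $u$ is homogeneous, so that every element occurring below is homogeneous. For homogeneous $a_1,\ldots,a_m,u$ and any $1\le i<m$, write $w=a_{i+2}(\cdots(a_mu)\cdots)$ (read as $u$ when $i=m-1$); substituting $x_1=a_i$, $x_2=a_{i+1}$, $x_3=w$ in (\ref{superleft}) and then multiplying the resulting equality in $F({\mathfrak B}_{\text{\rm sup}})$ on the left successively by $a_{i-1},\ldots,a_1$ gives the local rule
\[
a_1(\cdots(a_i(a_{i+1}w))\cdots)=(-1)^{\overline{a_i}\,\overline{a_{i+1}}}\,a_1(\cdots(a_{i+1}(a_iw))\cdots).
\]
The same argument applied to (\ref{superright}) yields the analogous rule inside a right-normed product $(\cdots((wa_i)a_{i+1})\cdots)a_m$.

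Since the transpositions $(i,i+1)$ generate $S_{k+l}$, iterating the local rule (with $m=k+l$ and $a_j=x_{\sigma(j)}$) rewrites $x_{\sigma(1)}(\cdots(x_{\sigma(k+l)}u)\cdots)$ as $\pm\,y_1(\cdots(y_k(z_1(\cdots(z_{l-1}(z_lu))\cdots)))\cdots)$, and it only remains to identify the accumulated sign. Each local swap contributes the factor $(-1)^{\overline{a_i}\,\overline{a_{i+1}}}$, which equals $-1$ exactly when both swapped variables are odd and equals $+1$ otherwise. To pin the sign down I would track the statistic $I$ defined as the number of pairs of odd variables that stand, in the current arrangement of the $k+l$ variables, in the order opposite to $z_1,\ldots,z_l$: an adjacent swap of two even variables, or of one even and one odd variable, leaves $I$ unchanged, while an adjacent swap of two odd variables changes $I$ by $\pm 1$. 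Hence the number of sign-producing swaps carried out over the whole process is congruent modulo $2$ to the initial value of $I$ minus its final value, that is, to $\sigma_1-0=\sigma_1$. Therefore the total sign is $(-1)^{\sigma_1}$, and the second identity follows verbatim with right-normed products and (\ref{superright}).

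\textbf{The delicate point} is precisely the well-definedness of this sign: a priori, two different sequences of adjacent swaps carrying the $\sigma$-arrangement to the standard one might produce different signs, and the invariance of $I$ under even--even and even--odd swaps is what rules this out while at the same time computing the answer as $(-1)^{\sigma_1}$. Everything else is a routine substitution into (\ref{superleft}) and (\ref{superright}). If some two of $z_1,\ldots,z_l$ coincide, then both sides vanish by (\ref{cor two odd in left}), so one may assume $z_1,\ldots,z_l$ pairwise distinct; repetitions among the even generators cause no trouble, since swapping equal variables contributes the sign $+1$.
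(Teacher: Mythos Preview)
Your proof is correct and follows essentially the same approach as the paper: both arguments reduce the reordering to repeated applications of super-left- and super-right-commutativity and then identify the accumulated sign as the parity of the inversions among the odd variables. The paper's proof simply cites the pre-packaged swap identities of Lemma~\ref{Lemma 2.1}, while you work directly from (\ref{superleft}) and (\ref{superright}) via adjacent transpositions and make the sign computation explicit through the invariant $I$; this is the same idea carried out at a slightly more elementary level.
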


\begin{proof}
The super-identities in the statement of the corollary follow from  the super-identities (\ref{two odd in left-box}), (\ref{two odd in left}), (\ref{two odd in right}),
(\ref{cor two odd in left-box}), (\ref{cor two odd in left}) and (\ref{cor two odd in right}) established in Lemma \ref{Lemma 2.1}.
\end{proof}

\begin{proposition}\label{pre-base}
Over a field $K$ of characteristics different from two the set $X\cup M$ is a pre-basis for $F({\mathfrak B}_{\text{\rm sup}})$.
\end{proposition}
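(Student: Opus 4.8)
The plan is to argue by induction on the degree $\deg w$ (the number of occurrences of free generators) of a nonassociative monomial $w$ in $F({\mathfrak B}_{\text{\rm sup}})$, using that such monomials span $F({\mathfrak B}_{\text{\rm sup}})$ as a vector space. If $\deg w=1$ then $w\in X$. If $\deg w\geq 2$ then $w=w_1w_2$ with $\deg w_1,\deg w_2\geq 1$, so $w_1,w_2$ lie in the span of $X\cup M$ by the inductive hypothesis, and by bilinearity of the product it suffices to treat $w=uv$ with $u,v\in X\cup M$.

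I would first dispose of the cases in which at least one factor is a single generator. If $u,v\in X$ then $uv$ is already a monomial of the shape (\ref{basis of superalgebra}) of degree $2$ (for suitable values of $k,l,m,n$ depending on the parities of $u$ and $v$), so $uv\in M$. If $u\in X$ and $v\in M$, I would use super-left-commutativity (\ref{superleft}) together with Corollary \ref{permuting the variables} to carry $u$ past the left-multipliers of $v$ into its sorted place: an even $u$ joins the even left-multipliers $y_1\le\cdots\le y_k$, while an odd $u$ is inserted among the odd generators $z_1<\cdots<z_l$ appearing on the ``left'' of $v$, possibly becoming the new distinguished generator $z_l$; if an odd $u$ coincides with one of those odd generators the monomial vanishes, because $z(zw)=0$ for odd $z$ (a consequence of (\ref{superleft}) since $\mathrm{char}\,K\neq 2$, cf.\ also (\ref{cor two odd in left-box}) and (\ref{cor two odd in left})), and otherwise we land in $M$. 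Dually, if $u\in M$ and $v\in X$, I would use weak associativity (\ref{weak-associativity}) to push $v$ into the left-normed ``core'' of $u$ as a new right-multiplier and then super-right-commutativity (\ref{superright}) to sort the right-multipliers (the even ones first, the odd ones afterwards); a repeated odd right-multiplier again annihilates the monomial (by (\ref{superright}), since $\mathrm{char}\,K\neq 2$, cf.\ (\ref{cor two odd in right})), and otherwise the result lies in $M$.

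It then remains to treat $w=uv$ with $u,v\in M$ of degree $\geq 2$, and here I would reduce to the cases just handled. Write $v=v_1v_2$ for the outermost product of $v$. If $v_1\in X$ (the outermost operation of $v$ is a left-multiplication) then (\ref{superleft}) gives $uv=\pm\, v_1(uv_2)$ with $\deg(uv_2)<\deg(uv)$, so $uv_2$ is in the span of $X\cup M$ by induction and then so is $v_1(uv_2)$ by the previous paragraph. If instead $v_2\in X$ and $\deg v_1\geq 2$, then (\ref{weak-associativity}) gives $uv=(uv_1)v_2$ and we argue in the same way. Finally, if $\deg v=2$ (so $v=v_1v_2$ with $v_1,v_2\in X$), we apply (\ref{superleft}) once more to get $uv=\pm\, v_1(uv_2)$ and conclude by induction. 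This completes the induction and shows that $X\cup M$ spans $F({\mathfrak B}_{\text{\rm sup}})$.

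The genuinely laborious --- though elementary --- step is the bookkeeping inside the two ``multiply by a generator'' cases: one has to verify that after the rewriting the monomial obtained is literally of the form (\ref{basis of superalgebra}), with the correct position of the distinguished odd generator $z_l$ and the correct split between left- and right-multipliers, and to keep track of the signs produced by Corollary \ref{permuting the variables}. I expect the only real subtlety to lie here, in particular in the degenerate cases $k=0$, $l=0$, $m=0$ or $n=0$, rather than in the inductive scheme itself.
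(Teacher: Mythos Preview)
Your argument is correct and follows essentially the same route as the paper. The paper's own proof is a two-line sketch: it invokes the bracketing argument from \cite{Dzh-Ism-Tul} to obtain the canonical parenthesization (\ref{basis of algebra}) and then appeals to Corollary~\ref{permuting the variables} to sort the even and odd variables; your induction on degree together with the case analysis on $uv$ is precisely an explicit unwinding of that sketch, using the same identities (\ref{superleft}), (\ref{superright}), (\ref{weak-associativity}) and the vanishing rules (\ref{cor two odd in left-box})--(\ref{cor two odd in right}).
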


\begin{proof}
Repeating the arguments in \cite{Dzh-Ism-Tul}, we obtain that the position of the parentheses in the monomials in $M$ is the same
as the position of the parentheses in the basis monomials (\ref{basis of algebra}) of free bicommutative algebras.
The ordering among even generators and odd generators can be arranged applying Corollary \ref{permuting the variables}.
\end{proof}

The next step in our considerations is to find the multiplication rules of the pre-basis monomials in $F({\mathfrak B}_{\text{\rm sup}})$.

\begin{lemma}\label{lem2}
If $u=(\cdots (u_1 u_2)\cdots )u_k$ and $v\in F^2({\mathfrak B}_{\text{\rm sup}})$, then
\[
uv= (-1)^{(\overline{u_2}+\cdots +\overline{u_k})\overline{v}}
u_1((\cdots (v u_2)\cdots )u_k).
\]
\end{lemma}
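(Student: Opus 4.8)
The plan is to reduce the identity to the two relations of Lemma~\ref{Lemma 2.1} that describe how a right comb interacts with a factor sitting on the outside: super-right-commutativity (\ref{superright}) and the weak-associativity relation (\ref{weak-associativity}). I would argue in two stages. In the first stage I slide $v$ past $u_k,u_{k-1},\dots,u_2$ using only (\ref{superright}); this produces the correct sign but a left comb in which $u_1v$ sits at the bottom. In the second stage I re-bracket this left comb, using only (\ref{weak-associativity}) and hence introducing no further sign, to move $u_1$ to the outside and recover the claimed expression. The hypothesis $v\in F^2({\mathfrak B}_{\text{\rm sup}})$ is needed only in the second stage, for the very first re-bracketing.

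For the first stage there is nothing to prove when $k=1$. For $k\ge 2$ write $u=((\cdots(u_1u_2)\cdots)u_{k-1})u_k$ and apply (\ref{superright}) with factors $(\cdots(u_1u_2)\cdots)u_{k-1}$, $u_k$, $v$ to obtain $uv=(-1)^{\overline{u_k}\,\overline{v}}(((\cdots(u_1u_2)\cdots)u_{k-1})v)u_k$. Repeating the same move on the inner expression, peeling off $u_{k-1}$, then $u_{k-2}$, and so on down to $u_2$, a straightforward induction (equivalently, a telescoping product of the signs $(-1)^{\overline{u_j}\,\overline{v}}$ for $j=k,k-1,\dots,2$) gives
\[
uv=(-1)^{(\overline{u_2}+\cdots+\overline{u_k})\overline{v}}\,(\cdots((u_1v)u_2)\cdots)u_k .
\]
This stage uses neither (\ref{weak-associativity}) nor the assumption $v\in F^2({\mathfrak B}_{\text{\rm sup}})$.

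For the second stage I claim that $(\cdots((u_1v)u_2)\cdots)u_k=u_1((\cdots(vu_2)\cdots)u_k)$, with no sign. Since $v\in F^2({\mathfrak B}_{\text{\rm sup}})$ we may write $v=v'v''$, and then the innermost bracket satisfies $(u_1v)u_2=(u_1(v'v''))u_2=u_1((v'v'')u_2)=u_1(vu_2)$ by (\ref{weak-associativity}). From this point on the middle entries $vu_2,(vu_2)u_3,\dots$ are automatically products, so (\ref{weak-associativity}) applies again to give $(u_1(vu_2))u_3=u_1((vu_2)u_3)$, and an induction on the number of factors $u_3,\dots,u_k$ still outside pushes $u_1$ in one level at a time until we reach $u_1((\cdots(vu_2)\cdots)u_k)$. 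Substituting into the displayed equation finishes the proof. No genuine obstacle is expected: the only things to watch are the sign bookkeeping in the first stage and the requirement that each use of (\ref{weak-associativity}) present its middle entry in the form $x_2x_3$, which is exactly the reason $v$ must lie in $F^2({\mathfrak B}_{\text{\rm sup}})$ for the first re-bracketing.
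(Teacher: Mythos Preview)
Your proof is correct and follows essentially the same two-stage approach as the paper: first apply super-right-commutativity (\ref{superright}) repeatedly to obtain $(-1)^{(\overline{u_2}+\cdots+\overline{u_k})\overline{v}}(\cdots((u_1v)u_2)\cdots)u_k$, then apply weak-associativity (\ref{weak-associativity}) $k-1$ times to rebracket. Your write-up is in fact more careful than the paper's in singling out exactly where the hypothesis $v\in F^2({\mathfrak B}_{\text{\rm sup}})$ enters.
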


\begin{proof}
\[
uv= ((\cdots (u_1 u_2)\cdots )u_k)v
\]
(by (\ref{superright}))
\[
=(-1)^{\overline{v}(\overline{u_2}+\cdots+\overline{u_k})} (\cdots((u_1v)u_2)\cdots )u_k
\]
(using $k-1$ times (\ref{weak-associativity}))
\[
=(-1)^{\overline{v}(\overline{u_2}+\overline{u_3}+\cdots+\overline{u_k})} u_1((\cdots ((v u_2)u_3)\cdots)u_k).
\]
\end{proof}

\begin{lemma}\label{main lemma}
Let $k, l, m, n\geq 1$. Then for all $t_i,u_i,v_i,w_i\in X$
\[
(t_k(\cdots ((\cdots (t_1 u_1)\cdots )u_l)\cdots)) (v_m(\cdots  ((\cdots (v_1 w_1)\cdots )w_n)\cdots))
\]
\[
=(-1)^{\overline{u}\,(\overline{v}+\overline{w})}
t_k (\cdots(t_1(v_m (\cdots ((\cdots (((\cdots (v_1 w_1)\cdots )w_n)u_1)\cdots)u_l)\cdots))) \cdots),
\]
where $\overline{u}=\overline{u_1}+\cdots+\overline{u_l}$,
$\overline{v}=\overline{v_1}+\cdots+\overline{v_m}$ and  $\overline{w}=\overline{w_1}+\cdots+\overline{w_n}$.
\end{lemma}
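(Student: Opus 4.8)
The plan is to reduce the product $uv$ to the required normal form in three steps, combining the weak-associativity super-identity (\ref{weak-associativity}) with a single application of Lemma \ref{lem2}. Throughout, put $P=(\cdots(t_1u_1)\cdots)u_l$ and $Q=(\cdots(v_1w_1)\cdots)w_n$, so that $u=t_k(t_{k-1}(\cdots(t_2P)\cdots))$ and $v=v_m(v_{m-1}(\cdots(v_2Q)\cdots))$, both lying in $F^2({\mathfrak B}_{\text{\rm sup}})$. The first step strips off the outer left factors $t_k,\dots,t_2$ of $u$. I claim that for every $j\ge 1$ and every $R\in F^2({\mathfrak B}_{\text{\rm sup}})$,
\[
\bigl(t_j(t_{j-1}(\cdots(t_2P)\cdots))\bigr)R=t_j\bigl(t_{j-1}(\cdots(t_2(PR))\cdots)\bigr),
\]
which is proved by induction on $j$: in the inductive step the inner factor $t_{j-1}(\cdots(t_2P)\cdots)$ is an honest product of two elements (it equals $t_{j-1}\cdot(\cdots)$ when $j\ge 3$ and equals $P=(\cdots)\,u_l$ when $j=2$, using $l\ge 1$), so (\ref{weak-associativity}) applies with $x_1=t_j$, $x_4=R$, and then one invokes the induction hypothesis. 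Taking $j=k$ and $R=v$ gives $uv=t_k(t_{k-1}(\cdots(t_2(Pv))\cdots))$.

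Next, since $P=(\cdots(t_1u_1)\cdots)u_l$ is a left-normed product of $l+1\ge 2$ factors and $v\in F^2({\mathfrak B}_{\text{\rm sup}})$, Lemma \ref{lem2} applies to $Pv$ and yields
\[
Pv=(-1)^{(\overline{u_1}+\cdots+\overline{u_l})(\overline{v_1}+\cdots+\overline{v_m}+\overline{w_1}+\cdots+\overline{w_n})}\,t_1\bigl((\cdots((vu_1)u_2)\cdots)u_l\bigr),
\]
where the second factor in the exponent is just the parity of the element $v$, i.e. $\overline{v}+\overline{w}$ in the notation of the statement; thus the sign produced is exactly $(-1)^{\overline{u}(\overline{v}+\overline{w})}$. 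Substituting into the formula from the first step,
\[
uv=(-1)^{\overline{u}(\overline{v}+\overline{w})}\,t_k\Bigl(\cdots\bigl(t_1\bigl((\cdots((vu_1)u_2)\cdots)u_l\bigr)\bigr)\cdots\Bigr).
\]

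Finally I would push the factors $u_1,\dots,u_l$ through the outer left factors $v_m,\dots,v_2$ of $v$. Exactly as in the first step, an induction on the number of outer left factors using (\ref{weak-associativity}) shows that for any element $c$,
\[
\bigl(v_m(v_{m-1}(\cdots(v_2Q)\cdots))\bigr)c=v_m\bigl(v_{m-1}(\cdots(v_2(Qc))\cdots)\bigr);
\]
here $Q=(\cdots(v_1w_1)\cdots)w_n$ is a binary product because $n\ge 1$, and on iteration $Q$ gets replaced by $Qc$, again a binary product, so the induction goes through. Applying this successively with $c=u_1,u_2,\dots,u_l$ gives
\[
(\cdots((vu_1)u_2)\cdots)u_l=v_m\Bigl(v_{m-1}\bigl(\cdots\bigl(v_2\bigl((\cdots((Qu_1)u_2)\cdots)u_l\bigr)\bigr)\cdots\bigr)\Bigr),
\]
and inserting this into the displayed formula for $uv$ reproduces exactly the right-hand side of the lemma. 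The whole argument is an organized iteration of (\ref{weak-associativity}) together with one use of Lemma \ref{lem2} and contains no genuinely difficult point; the only thing requiring care — which I regard as the main obstacle — is to make sure that every subterm to which (\ref{weak-associativity}) is applied is really a product of two elements, which is precisely where the hypotheses $k,l,m,n\ge 1$ enter, together with the observation in the middle step that Lemma \ref{lem2} contributes the full parity $\overline v+\overline w$ of the element $v$.
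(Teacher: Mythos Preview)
Your proof is correct and follows the same overall strategy as the paper --- peel off the outer left factors with the weak-associativity identity (\ref{weak-associativity}), apply Lemma~\ref{lem2} once, and then reassemble --- but your organisation is slightly cleaner than the paper's. The paper applies Lemma~\ref{lem2} only to the innermost product $u_0w_0$ (in your notation, $PQ$); to get there it first moves $u_0$ past $v_m,\dots,v_2$ using the \emph{signed} super-left-commutativity (\ref{superleft}), and afterwards moves $t_1$ back out the same way, which forces a chain of sign computations that only simplify to $(-1)^{\overline{u}(\overline{v}+\overline{w})}$ at the very end. You instead apply Lemma~\ref{lem2} directly to $P\cdot v$, so that the full parity $\overline{v}+\overline{w}$ of the element $v$ appears at once and the sign comes out immediately; the remaining rearrangement of the $u_i$ past $v_m,\dots,v_2$ is then handled by the \emph{sign-free} identity (\ref{weak-associativity}) alone. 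Both arguments are equally valid; yours simply trades two signed uses of (\ref{superleft}) for additional sign-free uses of (\ref{weak-associativity}), which makes the bookkeeping shorter.
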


\begin{proof}
Set
\[
u_0= (\cdots(t_1 u_1)\cdots)u_l \quad \text{and}\quad w_0=  (\cdots (v_1 w_1)\cdots)w_n.
\]
Then
\[
(t_k(\cdots ((\cdots (t_1 u_1)\cdots )u_l)\cdots)) (v_m(\cdots  ((\cdots (v_1 w_1)\cdots )w_n)\cdots)) \] \[ = (t_k (\cdots (t_2 u_0)\cdots ))\,(v_m (\cdots (v_2 w_0)\cdots ))
\]
(using $k-1$ times (\ref{weak-associativity}))
\[
=t_k(\cdots(t_2(u_0 (v_m(\cdots (v_2 w_0)\cdots ))))\cdots)
\]
(by (\ref{superleft}))
\[
=(-1)^{(\overline{v}-\overline{v_1})\overline{u_0}}  t_k(\cdots(t_2(v_m (\cdots (v_2 (u_0 w_0))\cdots)))\cdots)
\]
(by Lemma \ref{lem2})
\[
=(-1)^{(\overline{v}-\overline{v_1})\overline{u_0}+\overline{u}\,\overline{w_0}}t_k(\cdots(t_2(v_m (\cdots (v_2 (t_1 ((\cdots (w_0 u_1)\cdots)u_l)))\cdots )))\cdots)
\]
(by (\ref{superleft}))
\[
=(-1)^{(\overline{v}-\overline{v_1})(\overline{u_0}+\overline{t_1})+\overline{u}\,\overline{w_0}}t_k(\cdots(t_1(v_m (\cdots (v_2 ((\cdots (w_0 u_1)\cdots)u_l))\cdots )))\cdots)
\]
\[
=(-1)^{(\overline{v}-\overline{v_1})\,\overline{u}+\overline{u}(\overline{v_1}+\overline{w})}t_k(\cdots(a_1(v_m (\cdots (v_2 ((\cdots (w_0 u_1)\cdots)u_l))\cdots )))\cdots)
\]
\[
=(-1)^{\overline{u}\,(\overline{v}+\overline{w})} t_k (\cdots(t_1(v_m (\cdots ((\cdots (((\cdots (v_1 w_1)\cdots )w_n)u_1)\cdots)u_l)\cdots))) \cdots).
\]
\end{proof}

\subsection{The superalgebra $G_{\text{\rm sup}}(X)$}
Recall that a ${\mathbb Z}_2$-graded algebra is {\it super-commutative}, see e.g. \cite[page 76]{Varadarajan}, if it satisfies the super-identity
\[
x_2x_1=(-1)^{\overline{x_1}\,\overline{x_2}}x_1x_2.
\]
Let $F({\mathfrak A}_{\text{\rm sup}})$ be the free super-commutative algebra freely generated by $X=Y\cup Z$, where the sets $Y=X_0$ and $Z=X_1$ are well-ordered.
Then $F({\mathfrak A}_{\text{\rm sup}})$ has defining relations
\[
yx=xy,\,x\in X,y\in Y,\quad z_2z_1=-z_1z_2,\,z_1,z_2\in Z,
\]
and a basis consisting of all monomials
\[
y_1\dots y_kz_1\cdots z_l,\,y_1\leq\cdots\leq y_k,z_1<\cdots<z_l.
\]
Let $\omega(F({\mathfrak A}_{\text{\rm sup}}))$ be the augmentation ideal of $F({\mathfrak A}_{\text{\rm sup}})$.
As in \cite{Drensky-Zhakhaev} we define the algebra
\[
G_{\text{\rm sup}}(X)=KX+\omega(F({\mathfrak A}_{\text{\rm sup}}))\otimes_K\omega(F({\mathfrak A}_{\text{\rm sup}}))
\]
with multiplication rules
\[
x_i\circ x_j=x_i\otimes x_j,\, x_i,x_j\in X,
\]
\[
x\circ(u\otimes v)=(xu)\otimes v,\,(u\otimes v)\circ x=u\otimes (vx),\, x\in X, u,v\in\omega(F({\mathfrak A}_{\text{\rm sup}})),
\]
\[
(u_1\otimes v_1)\circ(u_2\otimes v_2)=(-1)^{\overline{v_1}(\overline{u_2}+\overline{v_2})}(u_1u_2)\otimes(v_2v_1), \,u_1,v_1,u_2,v_2\in\omega(F({\mathfrak A}_{\text{\rm sup}})).
\]
Here the parity of the elements of $G^2_{\text{\rm sup}}(X)$ is determined by the rule
\[
\overline{u\otimes v}\equiv\overline{u}+\overline{v}\,\text{(mod 2)}.
\]

\begin{lemma}\label{the algebra G}
{\rm (i)} The superalgebra $G_{\text{\rm sup}}(X)$ belongs to the variety ${\mathfrak B}_{\text{\rm sup}}$.

{\rm (ii)} Its square $G_{\text{\rm sup}}^2(X)$ is an associative super-commutative superalgebra.
\end{lemma}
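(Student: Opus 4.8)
The plan is to establish (i) by reducing the two super-identities to a short case analysis on a homogeneous spanning set, and (ii) by a direct computation with the explicit product on $G^2_{\text{\rm sup}}(X)$; in both parts all the real work is bookkeeping of Koszul signs, and these are produced solely by the super-commutativity of $F({\mathfrak A}_{\text{\rm sup}})$.

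For (i), first note that the super-identities $(\ref{superleft})$ and $(\ref{superright})$ are multilinear in $x_1,x_2,x_3$, so it suffices to verify them when each $x_i$ runs over the homogeneous spanning set $X\cup\{u\otimes v\mid u,v\in\omega(F({\mathfrak A}_{\text{\rm sup}}))\ \text{homogeneous}\}$ of $G_{\text{\rm sup}}(X)$; that $\circ$ respects the grading, so that $G_{\text{\rm sup}}(X)$ is indeed a superalgebra, is immediate from the rule $\overline{u\otimes v}=\overline u+\overline v$. This leaves $2^{3}=8$ cases for $(\ref{superleft})$ and $8$ for $(\ref{superright})$, according to whether each argument lies in $KX$ or in $G^2_{\text{\rm sup}}(X)=\omega(F({\mathfrak A}_{\text{\rm sup}}))\otimes\omega(F({\mathfrak A}_{\text{\rm sup}}))$. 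In every case, evaluating the inner product and then the outer one by the defining rules, both sides become scalar multiples of one and the same tensor $p\otimes q$ (with $p,q\in\omega(F({\mathfrak A}_{\text{\rm sup}}))$ obtained by juxtaposing the relevant ``left legs'', resp.\ ``right legs''), once the homogeneous factors inside $p$ and inside $q$ are brought into a common order; moving factors past each other by $ba=(-1)^{\overline a\,\overline b}ab$ in $F({\mathfrak A}_{\text{\rm sup}})$ turns one side into the other up to exactly the sign demanded by the identity. For example, for $x_1,x_2,x_3\in X$ the two sides of $(\ref{superleft})$ are $(x_1x_2)\otimes x_3$ and $(x_2x_1)\otimes x_3$, so the identity reduces to $x_2x_1=(-1)^{\overline{x_1}\,\overline{x_2}}x_1x_2$; in the mixed cases the extra Koszul factors coming from $(u_1\otimes v_1)\circ(u_2\otimes v_2)=(-1)^{\overline{v_1}(\overline{u_2}+\overline{v_2})}(u_1u_2)\otimes(v_2v_1)$ are precisely what reconciles the two sides. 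The cases for $(\ref{superright})$ are entirely analogous.

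For (ii), one sees at once from the multiplication rules that $G^2_{\text{\rm sup}}(X)=\omega(F({\mathfrak A}_{\text{\rm sup}}))\otimes_K\omega(F({\mathfrak A}_{\text{\rm sup}}))$ is closed under $\circ$, hence a (non-unital) superalgebra, and it remains to check that $(u_1\otimes v_1)\circ(u_2\otimes v_2)=(-1)^{\overline{v_1}(\overline{u_2}+\overline{v_2})}(u_1u_2)\otimes(v_2v_1)$ is associative and super-commutative; both are one-line computations built on the associativity and super-commutativity of $F({\mathfrak A}_{\text{\rm sup}})$. Expanding $((u_1\otimes v_1)\circ(u_2\otimes v_2))\circ(u_3\otimes v_3)$ and $(u_1\otimes v_1)\circ((u_2\otimes v_2)\circ(u_3\otimes v_3))$ both yield $\pm(u_1u_2u_3)\otimes(v_3v_2v_1)$ with exponent $\overline{v_1}(\overline{u_2}+\overline{v_2})+(\overline{v_1}+\overline{v_2})(\overline{u_3}+\overline{v_3})$ in the first and $\overline{v_2}(\overline{u_3}+\overline{v_3})+\overline{v_1}(\overline{u_2}+\overline{u_3}+\overline{v_2}+\overline{v_3})$ in the second, and these agree modulo $2$; for super-commutativity, rewriting $(u_2\otimes v_2)\circ(u_1\otimes v_1)$ by moving $u_2$ past $u_1$ and $v_1$ past $v_2$ in $F({\mathfrak A}_{\text{\rm sup}})$ gives $(-1)^{(\overline{u_1}+\overline{v_1})(\overline{u_2}+\overline{v_2})}(u_1\otimes v_1)\circ(u_2\otimes v_2)$, which is super-commutativity of $G^2_{\text{\rm sup}}(X)$ since $\overline{u_i\otimes v_i}=\overline{u_i}+\overline{v_i}$. (Equivalently, $G^2_{\text{\rm sup}}(X)$ is the super-tensor product of $\omega(F({\mathfrak A}_{\text{\rm sup}}))$ with its super-opposite algebra, both associative and super-commutative.)

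The only place requiring care, and the step I would carry out most slowly, is the sign bookkeeping in part (i): conceptually nothing is hidden, but in each of the eight cases one must confirm, bit by bit modulo $2$, that the sign accumulated by reordering factors in $F({\mathfrak A}_{\text{\rm sup}})$ equals the sign prescribed by $(\ref{superleft})$, resp.\ $(\ref{superright})$. Part (ii) is routine.
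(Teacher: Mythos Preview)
Your proposal is correct and follows essentially the same route as the paper: for (i) a case-by-case verification over the spanning set $X\cup\{u\otimes v\}$ (the paper writes out most of the eight cases explicitly, whereas you describe the strategy and do one), and for (ii) a direct sign computation showing associativity and super-commutativity of the product on $\omega(F({\mathfrak A}_{\text{\rm sup}}))\otimes\omega(F({\mathfrak A}_{\text{\rm sup}}))$. Your parenthetical observation that $G_{\text{\rm sup}}^2(X)$ is the super-tensor product $\omega(F({\mathfrak A}_{\text{\rm sup}}))\otimes\omega(F({\mathfrak A}_{\text{\rm sup}}))^{\text{op}}$ is a pleasant conceptual shortcut for (ii) that the paper does not make, but it is equivalent to, and not simpler than, the explicit check.
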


\begin{proof}
(i) Let $x_i\in X$, $u_i\otimes v_i\in G^2(X_\text{\rm sup})$, $i=1,2,3$. Then
\[
x_1\circ(x_2\circ x_3)=x_1\circ(x_2\otimes x_3)=(x_1x_2)\otimes x_3
\]
\[
=(-1)^{\overline{x_1}\,\overline{x_2}}(x_2x_1)\otimes x_3=(-1)^{\overline{x_1}\,\overline{x_2}}x_2\circ(x_1\circ x_3);
\]
\[
x_1\circ(x_2\circ(u_3\otimes v_3))=(x_1x_2u_3)\otimes v_3
\]
\[
=(-1)^{\overline{x_1}\,\overline{x_2}}(x_2x_1u_3)\otimes v_3=(-1)^{\overline{x_1}\,\overline{x_2}}x_2\circ(x_1\circ(u_3\otimes v_3)).
\]
With similar considerations we establish
\[
(x_1\circ x_2)\circ x_3=(-1)^{\overline{x_2}\,\overline{x_3}}(x_1\circ x_3)\circ x_2;
\]
\[
((u_1\otimes v_1)\circ x_2)\circ x_3=(-1)^{\overline{x_2}\,\overline{x_3}}((u_1\otimes v_1)\circ x_3)\circ x_2.
\]
Further we obtain
\[
(u_1\otimes v_1)\circ(x_2\circ x_3)=(-1)^{\overline{v_1}(\overline{x_2}+\overline{x_3})}(u_1x_2)\otimes(x_3v_1)
\]
\[
=(-1)^{\overline{v_1}(\overline{x_2}+\overline{x_3})}((-1)^{\overline{x_2}\,\overline{u_1}}(x_2u_1)\otimes(-1)^{\overline{x_3}\,\overline{v_1}}(v_1x_3))
\]
\[
=(-1)^{\overline{x_2}(\overline{u_1}+\overline{v_1})}x_2\circ((u_1\otimes v_1)\circ x_3);
\]
\[
(x_1\circ x_2)\circ(u_3\otimes v_3)=(-1)^{\overline{x_2}(\overline{u_3}+\overline{v_3})}(x_1u_3)\otimes (v_3x_2)
\]
\[
=(-1)^{\overline{x_2}(\overline{u_3}+\overline{v_3})}(x_1\circ(u_3\otimes v_3))\circ x_2;
\]
\[
(u_1\otimes v_1)\circ(x_2\circ(u_3\otimes v_3))=(-1)^{\overline{v_1}(\overline{x_2}+\overline{u_3}+\overline{v_3})}(u_1x_2u_3)\otimes(v_3v_1)
\]
\[
=(-1)^{\overline{v_1}(\overline{x_2}+\overline{u_3}+\overline{v_3})}((-1)^{\overline{x_2}\,\overline{u_1}}(x_2u_1u_3)\otimes(v_3v_1))
\]
\[
=(-1)^{\overline{x_2}(\overline{u_1}+\overline{v_1})}x_2\circ((-1)^{\overline{v_1}(\overline{u_3}+\overline{v_3})}(u_1u_3)\otimes(v_3v_1))
\]
\[
=(-1)^{\overline{x_2}(\overline{u_1}+\overline{v_1})}x_2\circ((u_1\otimes v_1)\circ(u_3\otimes v_3));
\]
\[
((u_1\otimes v_1)\circ x_2)\circ(u_3\otimes v_3)=(-1)^{(\overline{v_1}+\overline{x_2})(\overline{u_3}+\overline{v_3})}(u_1u_3)\otimes(v_3v_1x_2)
\]
\[
=(-1)^{\overline{x_2}(\overline{u_3}+\overline{v_3})}((-1)^{\overline{v_1}(\overline{u_3}+\overline{v_3})}(u_1u_3)\otimes(v_3v_1))\circ x_2
\]
\[
=(-1)^{\overline{x_2}(\overline{u_3}+\overline{v_3})}((u_1\otimes v_1)\circ(u_3\otimes v_3))\circ x_2;
\]
\[
(u_1\otimes v_1)\circ((u_2\otimes v_2)\circ x_3)=(-1)^{\overline{v_1}(\overline{u_2}+\overline{v_2}+\overline{x_3})}(u_1u_2)\otimes(v_2x_3v_1)
\]
\[
=(-1)^{\overline{v_1}(\overline{u_2}+\overline{v_2}+\overline{x_3})}((-1)^{\overline{u_1}\,\overline{u_2}}u_2u_1)
\otimes((-1)^{\overline{x_3}(\overline{v_1}+\overline{v_2})+\overline{v_1}\,\overline{v_2}}v_1x_3v_2)
\]
\[
=(-1)^{\overline{u_2}(\overline{u_1}+\overline{v_1})+\overline{v_2}\,\overline{x_3}}(u_2u_1)\otimes((v_1x_3)v_2)
\]
\[
=(-1)^{\overline{u_2}(\overline{u_1}+\overline{v_1})+\overline{v_2}\,\overline{x_3}}(-1)^{\overline{v_2}(\overline{u_1}+\overline{v_1}+\overline{x_3})}(u_2\otimes v_2)\circ(u_1\otimes(v_1x_3))
\]
\[
=(-1)^{(\overline{u_1}+\overline{v_1})(\overline{u_2}+\overline{v_2})}(u_2\otimes v_2)\circ((u_1\otimes v_1)\circ x_3);
\]
\[ (x_1\circ(u_2\otimes v_2))\circ(u_3\otimes v_3)=(-1)^{\overline{v_2}(\overline{u_3}+\overline{v_3})}(x_1u_2u_3)\otimes(v_3v_2)
\]
\[
=(-1)^{\overline{v_2}(\overline{u_3}+\overline{v_3})}(-1)^{\overline{u_2}\,\overline{u_3}+\overline{v_2}\,\overline{v_3}}(x_1u_3u_2)\otimes(v_2v_3)
=(-1)^{(\overline{u_2}+\overline{v_2})\overline{u_3}}(x_1u_3u_2)\otimes(v_2v_3)
\]
\[
=(-1)^{(\overline{u_2}+\overline{v_2})\overline{u_3}}(-1)^{(\overline{u_2}+\overline{v_2})\overline{v_3}}((x_1u_3)\otimes v_3)\circ(u_2\otimes v_2)
\]
\[
=(-1)^{(\overline{u_2}+\overline{v_2})(\overline{u_3}+\overline{v_3})}(x_1\circ(u_3\otimes v_3))\circ(u_2\otimes v_2);
\]
\[
(u_1\otimes v_1)\circ((u_2\otimes v_2)\circ(u_3\otimes v_3))=(u_1\otimes v_1)\circ(-1)^{\overline{v_2}(\overline{u_3}+\overline{v_3})}((u_2u_3)\otimes(v_3v_2))
\]
\[
=(-1)^{\overline{v_1}(\overline{u_2}+\overline{u_3}+\overline{v_3}+\overline{v_2})}(-1)^{\overline{v_2}(\overline{u_3}+\overline{v_3})}(u_1u_2u_3)\otimes(v_3v_2v_1)
\]
\[
(-1)^{\overline{v_1}(\overline{u_2}+\overline{u_3}+\overline{v_3})+\overline{v_2}(\overline{u_3}+\overline{v_3})+\overline{u_1}\,\overline{u_2}}(u_2u_1u_3)\otimes(v_3v_1v_2)
\]
\[
=(-1)^{\overline{v_1}(\overline{u_2}+\overline{u_3}+\overline{v_3})+\overline{v_2}(\overline{u_3}+\overline{v_3})+\overline{u_1}\,\overline{u_2}}
(-1)^{\overline{v_2}(\overline{u_1}+\overline{u_3}+\overline{v_3}+\overline{v_1})}(u_2\otimes v_2)\circ((u_1u_3)\otimes(v_3v_1))
\]
\[
=(-1)^{(\overline{u_1}+\overline{v_1})(\overline{u_2}+\overline{v_2})}(u_2\otimes v_2)\circ(-1)^{\overline{v_1}(\overline{u_3}+\overline{v_3})}(u_1u_3)\otimes(v_3v_1)
\]
\[
=(-1)^{(\overline{u_1}+\overline{v_1})(\overline{u_2}+\overline{v_2})}(u_2\otimes v_2)\circ((u_1\otimes v_1)\circ(u_3\otimes v_3));
\]
\[
((u_1\otimes v_1)\circ(u_2\otimes v_2))\circ(u_3\otimes v_3)=(-1)^{\overline{v_1}(\overline{u_2}+\overline{v_2})}((u_1u_2)\otimes(v_2v_1))\circ(u_3\otimes v_3)
\]
\[
=(-1)^{\overline{v_1}(\overline{u_2}+\overline{v_2})}(-1)^{(\overline{v_1}+\overline{v_2})(\overline{u_3}+\overline{v_3})}(u_1u_2u_3)\otimes(v_3v_2v_1)
\]
\[
=(-1)^{\overline{v_1}(\overline{u_2}+\overline{v_2}+\overline{u_3}+\overline{v_3})+(\overline{u_2}+\overline{v_2})\overline{u_3}}((u_1u_3)u_2)\otimes(v_2(v_3v_1))
\]
\[
=(-1)^{\overline{v_1}(\overline{u_2}+\overline{v_2}+\overline{u_3}+\overline{v_3})+(\overline{u_2}+\overline{v_2})\overline{u_3}}(-1)^{(\overline{v_3}+\overline{v_1})(\overline{u_2}+\overline{v_2})}
((u_1u_3)\otimes(v_3v_1))\circ(u_2\otimes v_2) \] \[ =(-1)^{(\overline{v_1}+\overline{u_2}+\overline{v_2})(\overline{u_3}+\overline{v_3})}((u_1u_3)\otimes(v_3v_1))\circ(u_2\otimes v_2)
\]
\[
=(-1)^{(\overline{u_2}+\overline{v_2})(\overline{u_3}+\overline{v_3})}((u_1\otimes v_1)\circ(u_3\otimes v_3))\circ(u_2\otimes v_2).
\]

(ii) For $u_i\otimes v_i\in G^2(X_\text{\rm sup})$, $i=1,2,3$, we obtain
\[
((u_1\otimes
v_1)\circ(u_2\otimes v_2))\circ(u_3\otimes v_3) =(-1)^{\overline{v_1}(\overline{u_2}+\overline{v_2}+\overline{u_3}+\overline{v_3})+\overline{v_2}(\overline{u_3}+\overline{v_3})}(u_1u_2u_3)\otimes(v_3v_2v_1)
\]
\[
=(u_1\otimes v_1)\circ((u_2\otimes v_2)\circ(u_3\otimes v_3))
\]
which shows the associativity of $G^2(X_\text{\rm sup})$.

For $u_i\otimes v_i\in G^2(X_\text{\rm sup})$, $i=1,2$, we compute consecutively
\[
(u_1\otimes v_1)\circ(u_2\otimes v_2)-(-1)^{(\overline{u_1}+\overline{v_1})(\overline{u_2}+\overline{v_2})}(u_2\otimes v_2)\circ(u_1\otimes v_1)
\]
\[
=(-1)^{\overline{v_1}(\overline{u_2}+\overline{v_2})}(u_1u_2)\otimes(v_2v_1)
-(-1)^{(\overline{u_1}+\overline{v_1})(\overline{u_2}+\overline{v_2})}(-1)^{\overline{v_2}(\overline{u_1}+\overline{v_1})}(u_2u_1)\otimes(v_1v_2)
\]
\[
=(-1)^{\overline{v_1}(\overline{u_2}+\overline{v_2})}(u_1u_2)\otimes(v_2v_1)-(-1)^{(\overline{u_1}+\overline{v_1})\overline{u_2}}(u_2u_1)\otimes(v_1v_2)
\]
\[
=((-1)^{\overline{v_1}(\overline{u_2}+\overline{v_2})}
-(-1)^{(\overline{u_1}+\overline{v_1})\overline{u_2}}(-1)^{\overline{u_1}\,\overline{u_2}+\overline{v_1}\,\overline{v_2}})(u_1u_2)\otimes(v_2v_1)=0,
\]
i.e. the super-commutative super-identity also holds for $G^2(X_\text{\rm sup})$.
\end{proof}

\begin{theorem}\label{basis}
Let $F({\mathfrak B}_{\text{\rm sup}})$ be the free bicommutative superalgebra generated by the set  $X$ over a field
of characteristics different from two. Then the set $X\cup M$, where $M$ is defined in {\rm (\ref{basis of superalgebra})}, forms a basis of $F({\mathfrak B}_{\text{\rm sup}})$.
The superalgebra $F({\mathfrak B}_{\text{\rm sup}})$ is isomorphic to the superalgebra $G_{\text{\rm sup}}(X)$ defined above Lemma {\rm \ref{the algebra G}}.
\end{theorem}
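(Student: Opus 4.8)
The plan is to construct the isomorphism $\phi\colon F({\mathfrak B}_{\text{\rm sup}})\to G_{\text{\rm sup}}(X)$ directly and then read off the basis statement from it. By Lemma~\ref{the algebra G}(i) the superalgebra $G_{\text{\rm sup}}(X)$ belongs to ${\mathfrak B}_{\text{\rm sup}}$, and the assignment $x\mapsto x$, $x\in X$, maps $Y=X_0$ into the even component and $Z=X_1$ into the odd component of $G_{\text{\rm sup}}(X)$, so it respects the ${\mathbb Z}_2$-grading. By the universal property of the free bicommutative superalgebra it extends to a homomorphism $\phi$ of superalgebras, whose image is the subalgebra of $G_{\text{\rm sup}}(X)$ generated by $X$.

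Next I would evaluate $\phi$ on the pre-basis $X\cup M$ provided by Proposition~\ref{pre-base}. Obviously $\phi(x)=x$ for $x\in X$. A monomial of the form (\ref{basis of superalgebra}) is built from a single generator by first right-multiplying successively by $y_{k+1},\dots,y_{k+m},z_{l+1},\dots,z_{l+n}$ and then left-multiplying successively by $z_{l-1},\dots,z_1,y_k,\dots,y_1$. In $G_{\text{\rm sup}}(X)$ each of these steps is governed only by the sign-free rules $x\circ(u\otimes v)=(xu)\otimes v$ and $(u\otimes v)\circ x=u\otimes(vx)$, and at every stage the even and the odd generators are appended in weakly increasing, respectively strictly increasing, order, so every intermediate product is already a basis monomial of $F({\mathfrak A}_{\text{\rm sup}})$ in each tensor slot and no rewriting is needed. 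Hence $\phi$ sends the monomial (\ref{basis of superalgebra}) to
\[
(y_1\cdots y_kz_1\cdots z_l)\otimes(y_{k+1}\cdots y_{k+m}z_{l+1}\cdots z_{l+n}).
\]
Since the data defining an element of $M$ amount precisely to a pair of ordered monomials, one in $y_1\le\cdots\le y_k$, $z_1<\cdots<z_l$ with $k+l\ge1$ and one in $y_{k+1}\le\cdots\le y_{k+m}$, $z_{l+1}<\cdots<z_{l+n}$ with $m+n\ge1$, this assignment is a bijection from $M$ onto the set of all tensors $u\otimes v$ with $u,v$ basis monomials of the augmentation ideal $\omega(F({\mathfrak A}_{\text{\rm sup}}))$. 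Consequently $\phi$ maps $X\cup M$ bijectively onto the union of the basis $X$ of $KX$ and the basis $\{u\otimes v\}$ of $\omega(F({\mathfrak A}_{\text{\rm sup}}))\otimes\omega(F({\mathfrak A}_{\text{\rm sup}}))$; in particular $\phi(X\cup M)$ is a basis of $G_{\text{\rm sup}}(X)$ and $\phi$ is surjective.

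Both claims now follow. By Proposition~\ref{pre-base} the set $X\cup M$ spans $F({\mathfrak B}_{\text{\rm sup}})$, and the linear map $\phi$ carries it to a linearly independent set, so $X\cup M$ is linearly independent and hence a basis of $F({\mathfrak B}_{\text{\rm sup}})$. As $\phi$ takes this basis bijectively onto a basis of $G_{\text{\rm sup}}(X)$, it is a linear isomorphism, and being an algebra homomorphism it is an isomorphism of superalgebras $F({\mathfrak B}_{\text{\rm sup}})\cong G_{\text{\rm sup}}(X)$; under this identification the product of two monomials of $M$ is the one described in Lemmas~\ref{lem2} and~\ref{main lemma}.

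The only step that is computational rather than formal is the explicit evaluation of $\phi$ on $M$, and I expect the degenerate cases $k=0$, $l=0$, $m=0$ or $n=0$ to be where care is needed: there one must read off from (\ref{basis of superalgebra}) which generator serves as the seed and in what order the left and right multiplications are applied. Because the pertinent rules in $G_{\text{\rm sup}}(X)$ carry no signs, every such case again produces a tensor of ordered monomials and the bijection with the basis $\{u\otimes v\}$ is maintained, so this bookkeeping is the main, but not a deep, obstacle.
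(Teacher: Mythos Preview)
Your proposal is correct and follows essentially the same route as the paper: construct the homomorphism $\phi$ from the universal property (using Lemma~\ref{the algebra G}(i)), compute that $\phi$ sends each monomial (\ref{basis of superalgebra}) to the corresponding ordered tensor monomial in $\omega(F({\mathfrak A}_{\text{sup}}))\otimes\omega(F({\mathfrak A}_{\text{sup}}))$, and deduce from the linear independence of these images that the pre-basis $X\cup M$ from Proposition~\ref{pre-base} is a basis and that $\phi$ is an isomorphism. Your additional remarks on why no signs arise in the evaluation and on the degenerate cases $k,l,m,n=0$ are sound bookkeeping rather than new ideas.
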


\begin{proof}
By the universal property of the free bicommutative superalgebra, if $A$ is an arbitrary bicommutative superalgebra,
then every preserving the ${\mathbb Z}_2$-grading mapping $X\to A$ is extended in a unique way to a homomorphism $F({\mathfrak B}_{\text{\rm sup}})\to A$.
By Lemma \ref{the algebra G} this holds also for the superalgebra $G_{\text{\rm sup}}(X)$ generated by the set $X$.
Hence the homomorphism $F({\mathfrak B}_{\text{\rm sup}})\to G_{\text{\rm sup}}(X)$ defined by $x\to x$, $x\in X$, is onto. Clearly, the element
\[
y_1(\cdots(y_k(z_1(\cdots(z_{l-1}((\cdots(((\cdots(z_ly_{k+1})\cdots)y_{k+m})z_{l+1})\cdots)z_{l+n}))\cdots)))\cdots)
\]
from (\ref{basis of superalgebra}) maps to
\begin{equation}\label{elements of G}
y_1\cdots y_kz_1\cdots z_{l-1} z_l\otimes y_{k+1}\cdots y_{k+m}z_{l+1}\cdots z_{l+n}.
\end{equation}
Since the elements in (\ref{elements of G}) are linearly independent in $G^2(X_\text{\rm sup})$,
the same holds for their preimages  (\ref{basis of superalgebra}) in $F^2({\mathfrak B}_{\text{\rm sup}})$.
This implies that the pre-basis $X\cup M$ of $F({\mathfrak B}_{\text{\rm sup}})$ is a basis.
\end{proof}

\begin{remark}\label{sypercommutativity of square of superalgebra}
As a combination of Lemma \ref{the algebra G} (ii) and Theorem \ref{basis} we obtain that the square $A^2$ of any bicommutative superalgebra $A$ is associative super-commutative.
\end{remark}

\section{Hilbert series of free bicommutative superalgebras}

In this section we fix a finite set
\[
X_{p,q}=Y_p\cup Z_q,\quad Y_p=\{y_1,\ldots,y_p\},Z_q=\{z_1,\ldots,z_q\}
\]
and consider the free bicommutative superalgebra $F_{p,q}({\mathfrak B}_{\text{\rm sup}})$ freely generated by $X_{p,q}$.
The algebra $F_{p,q}({\mathfrak B}_{\text{\rm sup}})$ is ${\mathbb N}$-graded:
\[
F_{p,q}({\mathfrak B}_{\text{\rm sup}})=F_{p,q}^{(1)}\oplus F_{p,q}^{(2)}\oplus F_{p,q}^{(3)}\oplus\cdots,
\]
where $F_{p,q}^{(n)}=F_{p,q}^{(n)}({\mathfrak B}_{\text{\rm sup}})$ is the set of homogeneous elements of degree $n$ in $F_{p,q}({\mathfrak B}_{\text{\rm sup}})$.
The {\it Hilbert series} of $F_{p,q}({\mathfrak B}_{\text{\rm sup}})$ is the formal power series
\[
H(F_{p,q}({\mathfrak B}_{\text{\rm sup}}),t)=\sum_{n\geq 1}\dim(F_{p,q}^{(n)})t^n.
\]
The algebra $F_{p,q}({\mathfrak B}_{\text{\rm sup}})$ has two more precise gradings:

${\mathbb N}_0^2$ grading
\[
F_{p,q}({\mathfrak B}_{\text{\rm sup}})=\bigoplus_{k,l}F_{p,q}^{(k,l)},
\]
where ${\mathbb N}_0$ is the set of nonnegative integers and $F_{p,q}^{(k,l)}$ consists of the elements of $F_{p,q}({\mathfrak B}_{\text{\rm sup}})$
which are homogeneous of degree $k$ and $l$ with respect to the even and odd variables, respectively;

${\mathbb N}_0^{p+q}$ grading
\[
F_{p,q}({\mathfrak B}_{\text{\rm
sup}})=\bigoplus_{\text{\bf k}=(k_1,\ldots,k_p), \atop \text{\bf l}=(l_1,\ldots,l_q)}F_{p,q}^{(\text{\bf k},\text{\bf l})},
\]
which counts the degree of each variable in $X_{p,q}$. The corresponding Hilbert series are
\[
H(F_{p,q}({\mathfrak B}_{\text{\rm sup}}),u,v)=\sum_{k+l\geq 1}\dim(F_{p,q}^{(k,l)})u^kv^l,
\]
\[
H(F_{p,q}({\mathfrak B}_{\text{\rm sup}}),U_p,V_q)=\sum_{k_i,l_j}\dim(F_{p,q}^{(\text{\bf k},\text{\bf l})})u_1^{k_1}\cdots u_p^{k_p}v_1^{l_1}\cdots v_q^{l_q}.
\]

\begin{remark}\label{working in G instead of in F}
By Theorem \ref{basis} the free bicommutative superalgebra $F_{p,q}({\mathfrak B}_{\text{\rm sup}})$
and the algebra $G_{\text{\rm sup}}(X_{p,q})$ generated by the set $X_{p,q}=Y_p\cup Z_q$ are isomorphic.
It is easy to see that they are also isomorphic as ${\mathbb N}_0^{p+q}$-graded algebras.
In the sequel we shall often work in $G_{\text{\rm sup}}(X_{p,q})$ instead of in $F_{p,q}({\mathfrak B}_{\text{\rm sup}})$
but shall state the results for $F_{p,q}({\mathfrak B}_{\text{\rm sup}})$.
\end{remark}

\begin{theorem}\label{Hilbert series}
The Hilbert series of the free bicommutative superalgebra $F_{p,q}({\mathfrak B}_{\text{\rm sup}})$ are
\begin{equation}\label{Hilbert multigraded}
H(F_{p,q}({\mathfrak B}_{\text{\rm sup}}),U_p,V_q)=\sum_{i=1}^pu_i+\sum_{j=1}^qv_j+\left(\prod_{i=1}^p\frac{1}{1-u_i}\prod_{j=1}^q(1+v_j)-1\right)^2;
\end{equation}
\begin{equation}\label{Hilbert bigraded}
H(F_{p,q}({\mathfrak B}_{\text{\rm sup}}),u,v)=pu+qv+\left(\frac{(1+v)^q}{(1-u)^p}-1\right)^2;
\end{equation}
\begin{equation}\label{Hilbert graded}
H(F_{p,q}({\mathfrak B}_{\text{\rm sup}}),t)=(p+q)t+\left(\frac{(1+t)^q}{(1-t)^p}-1\right)^2.
\end{equation}
\end{theorem}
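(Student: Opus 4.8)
The plan is to read off all three Hilbert series from the vector-space basis of $F_{p,q}({\mathfrak B}_{\text{\rm sup}})$ provided by Theorem \ref{basis}. By that theorem, together with Remark \ref{working in G instead of in F}, the superalgebra $F_{p,q}({\mathfrak B}_{\text{\rm sup}})$ is isomorphic, as an ${\mathbb N}_0^{p+q}$-graded vector space, to
\[
G_{\text{\rm sup}}(X_{p,q})=KX_{p,q}\oplus\bigl(\omega\otimes_K\omega\bigr),\qquad \omega:=\omega(F({\mathfrak A}_{\text{\rm sup}})),
\]
where $F({\mathfrak A}_{\text{\rm sup}})$ is the free super-commutative algebra on $X_{p,q}$. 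Since the Hilbert series does not see the multiplication (in particular it is insensitive to the signs occurring in the product of $G_{\text{\rm sup}}$), it suffices to compute the multigraded dimensions of the two direct summands and add them. The summand $KX_{p,q}$ obviously contributes $\sum_{i=1}^pu_i+\sum_{j=1}^qv_j$.

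For the second summand I first recall the Hilbert series of the free super-commutative algebra. Its basis consists of the monomials $y_1\cdots y_kz_1\cdots z_l$ with $y_1\leq\cdots\leq y_k$ in $Y_p$ and $z_1<\cdots<z_l$ in $Z_q$ (with $k,l\geq 0$, so that $1$ is included). Each even generator $y_i$ may occur with arbitrary multiplicity and contributes the factor $(1-u_i)^{-1}$, while each odd generator $z_j$ squares to zero and contributes the factor $(1+v_j)$; hence
\[
H(F({\mathfrak A}_{\text{\rm sup}}),U_p,V_q)=\prod_{i=1}^p\frac{1}{1-u_i}\prod_{j=1}^q(1+v_j),
\]
and passing to the augmentation ideal removes the constant term, so that $H(\omega,U_p,V_q)=\prod_{i=1}^p\frac{1}{1-u_i}\prod_{j=1}^q(1+v_j)-1$. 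It remains to note that the ${\mathbb N}_0^{p+q}$-grading of $F^2({\mathfrak B}_{\text{\rm sup}})$ agrees with the tensor-product grading of $\omega\otimes_K\omega$: this is visible from the explicit isomorphism in the proof of Theorem \ref{basis}, under which the basis monomial (\ref{basis of superalgebra}) is sent to (\ref{elements of G}), an element whose degree in each $y_i$ (resp.\ $z_j$) is the sum of its degrees in the two tensor factors. Since $\omega$ has finite-dimensional multihomogeneous components, the Hilbert series of a tensor product is the product of the Hilbert series, so the summand $\omega\otimes_K\omega$ contributes $H(\omega,U_p,V_q)^2$. Adding the two contributions gives (\ref{Hilbert multigraded}).

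Finally, (\ref{Hilbert bigraded}) follows from (\ref{Hilbert multigraded}) by the specialization $u_i\mapsto u$ for $1\leq i\leq p$ and $v_j\mapsto v$ for $1\leq j\leq q$, which turns $\prod_{i=1}^p(1-u_i)^{-1}$ into $(1-u)^{-p}$ and $\prod_{j=1}^q(1+v_j)$ into $(1+v)^q$; and (\ref{Hilbert graded}) follows from (\ref{Hilbert bigraded}) by the further specialization $u\mapsto t$, $v\mapsto t$ (legitimate because each multihomogeneous component, hence each component $F_{p,q}^{(n)}$, is finite-dimensional, so these specializations are well-defined operations on the formal power series). The whole argument is essentially bookkeeping; the only place that calls for a little care is the compatibility of gradings noted above, i.e.\ checking that the isomorphism of Theorem \ref{basis} respects the ${\mathbb N}_0^{p+q}$-grading, so that the square of $F_{p,q}({\mathfrak B}_{\text{\rm sup}})$ genuinely contributes the square of $H(\omega)$ rather than some twisted version.
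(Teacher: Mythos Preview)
Your proof is correct and follows essentially the same approach as the paper: both work in $G_{\text{\rm sup}}(X_{p,q})$, split off the linear span $KX_{p,q}$, compute $H(\omega(F_{p,q}({\mathfrak A}_{\text{\rm sup}})))=\prod_i(1-u_i)^{-1}\prod_j(1+v_j)-1$, square it to get the contribution of $\omega\otimes\omega$, and then specialize to obtain (\ref{Hilbert bigraded}) and (\ref{Hilbert graded}). Your explicit remark on the compatibility of the ${\mathbb N}_0^{p+q}$-grading under the isomorphism of Theorem \ref{basis} is a welcome bit of extra care, but the argument is the same as the paper's.
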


\begin{proof}
We shall work in $G_{\text{\rm sup}}(X_{p,q})$. The summand $\displaystyle \sum_{i=1}^pu_i+\sum_{j=1}^qv_j$ in the Hilbert series $H(F_{p,q}({\mathfrak B}_{\text{\rm sup}}),U_p,V_q)$
contains the information for the grading of the vector space $KX_{p,q}$. Hence it is sufficient to calculate the Hilbert series of $G^2_{\text{\rm sup}}(X_{p,q})$.
Since $G^2_{\text{\rm sup}}(X_{p,q})$ and $\omega(F({\mathfrak A}_{\text{\rm sup}}))\otimes_K\omega(F({\mathfrak A}_{\text{\rm sup}}))$ are isomorphic as ${\mathbb N}_0^{p+q}$-graded vector spaces
we have that
\[
H(G^2_{\text{\rm sup}}(X_{p,q}),U_p,V_q)=H^2(\omega(F_{p,q}({\mathfrak A}_{\text{\rm sup}})))=H(F_{p,q}({\mathfrak A}_{\text{\rm sup}}))-1)^2.
\]
The isomorphism $F_{p,q}({\mathfrak A}_{\text{\rm sup}})\cong K[Y_p]\otimes_KF_{0,q}({\mathfrak A}_{\text{\rm sup}}))$
implies that
\[
H(F_{p,q}({\mathfrak A}_{\text{\rm sup}}),U_p,V_q)=H(K[Y_p],U_p)H(F_{0,q}({\mathfrak A}_{\text{\rm sup}}),V_q)
\]
\[
=\prod_{i=1}^p\frac{1}{1-u_i}H(F_{0,q}({\mathfrak A}_{\text{\rm sup}}),V_q).
\]
The superalgebra $F_{0,q}({\mathfrak A}_{\text{\rm sup}})$ has a basis consisting of all monomials
\[
1, z_{j_1}\cdots z_{j_l},\quad 1\leq j_1<\cdots<j_l\leq q
\]
and hence its Hilbert series is
\[
H(F_{0,q}({\mathfrak A}_{\text{\rm sup}}),V_q)=\sum_{1\leq j_1<\cdots<j_l\leq q}v_{j_1}\cdots v_{j_l}=\prod_{j=1}^q(1+v_j).
\]
This completes the proof for the Hilbert series (\ref{Hilbert multigraded}).
The statements for the Hilbert series (\ref{Hilbert bigraded}) and (\ref{Hilbert graded}) follow immediately because
\[
H(F_{p,q}({\mathfrak B}_{\text{\rm sup}}),u,v)=H(F_{p,q}({\mathfrak A}_{\text{\rm sup}})),\underbrace{u,\ldots,u}_{p\text{ times}}\underbrace{v,\ldots,v}_{q\text{ times}}),
\]
\[
H(F_{p,q}({\mathfrak B}_{\text{\rm sup}}),t)=H(F_{p,q}({\mathfrak A}_{\text{\rm sup}})),\underbrace{t,\ldots,t}_{p+q\text{ times}}).
\]
\end{proof}

\begin{corollary}\label{coefficients of Hilbert series}
The dimensions of the homogeneous components of $F_{p,q}({\mathfrak B}_{\text{\rm sup}})$ are
\begin{equation}\label{multigrading}
\dim(F_{p,q}^{(\text{\bf k},\text{\bf l})})=\prod_{i=1}^p(k_i+1)\prod_{j=1}^q(\delta_{2,l_j}+2\delta_{1,l_j}+\delta_{0,l_j})-2\prod_{j=1}^q(\delta_{1,l_j}+\delta_{0,l_j})
\end{equation}
for $\displaystyle \sum_{i=1}^pk_i+\sum_{j=1}^ql_j>1$, where $\delta_{i,j}$ is the Kronecker delta and
\[
\dim(F_{p,q}^{(\text{\bf k},\text{\bf l})})=1\text{ when }\sum_{i=1}^pm_i+\sum_{j=1}^qn_j=1;
\]
\begin{equation}\label{bigrading}
\dim(F_{p,q}^{(k,l)})=\begin{cases}
{\displaystyle
\binom{k+2p-1}{k}\binom{2q}{l}-2\binom{k+p-1}{k}\binom{q}{l}},\text{ if }k+l>1;\\
p,\text{ if }(p,q)=(1,0);\\
q,\text{ if }(p,q)=(0,1).
\end{cases}
\end{equation}
\end{corollary}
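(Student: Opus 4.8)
The plan is to extract the coefficients of the explicit Hilbert series computed in Theorem \ref{Hilbert series} by expanding each factor as a power series and reading off the coefficient of the relevant monomial. The two displayed formulas (\ref{multigrading}) and (\ref{bigrading}) correspond to the multigraded series (\ref{Hilbert multigraded}) and the bigraded series (\ref{Hilbert bigraded}), so I would treat them in turn, the bigraded case being obtained by the specialization $u_i\mapsto u$, $v_j\mapsto v$ already recorded in the proof of Theorem \ref{Hilbert series} (equivalently, by a direct expansion).

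First I would handle (\ref{multigrading}). Write $P(U_p,V_q)=\prod_{i=1}^p(1-u_i)^{-1}\prod_{j=1}^q(1+v_j)$, so that the degree-$\geq 2$ part of $H(F_{p,q}({\mathfrak B}_{\text{\rm sup}}),U_p,V_q)$ equals $(P-1)^2=P^2-2P+1$. Since the constant term of $P$ is $1$, for any multidegree $(\mathbf{k},\mathbf{l})$ with $\sum k_i+\sum l_j>1$ the contribution of the $+1$ vanishes and $\dim(F_{p,q}^{(\mathbf k,\mathbf l)})$ is the coefficient of $u_1^{k_1}\cdots u_p^{k_p}v_1^{l_1}\cdots v_q^{l_q}$ in $P^2-2P$. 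Now $P^2=\prod_i(1-u_i)^{-2}\prod_j(1+v_j)^2$, and the coefficient of $u_i^{k_i}$ in $(1-u_i)^{-2}$ is $k_i+1$, while the coefficient of $v_j^{l_j}$ in $(1+v_j)^2$ is $\binom{2}{l_j}$, which is $\delta_{0,l_j}+2\delta_{1,l_j}+\delta_{2,l_j}$ (and $0$ for $l_j\geq 3$); multiplying over $i$ and $j$ gives the first product in (\ref{multigrading}). Similarly the coefficient in $P$ is $\prod_i(k_i+1)\prod_j\binom{1}{l_j}=\prod_i(k_i+1)\prod_j(\delta_{0,l_j}+\delta_{1,l_j})$, but one checks the product $\prod_i(k_i+1)$ is absorbed into — wait, this is exactly the stated $-2\prod_j(\delta_{1,l_j}+\delta_{0,l_j})$ only when every $k_i=0$; in general the $-2P$ term contributes $-2\prod_i(k_i+1)\prod_j(\delta_{0,l_j}+\delta_{1,l_j})$, so I would either note the paper's formula implicitly has $\prod_i(k_i+1)$ understood, or more safely rewrite it. In any case the mechanism is a routine coefficient extraction from a product of one-variable series. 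The linear case $\sum k_i+\sum l_j=1$ is read directly off the summand $\sum u_i+\sum v_j$ in (\ref{Hilbert multigraded}).

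For (\ref{bigrading}) I would expand the bigraded series (\ref{Hilbert bigraded}): the degree-$\geq 2$ part is $\big((1+v)^q(1-u)^{-p}-1\big)^2=(1+v)^{2q}(1-u)^{-2p}-2(1+v)^q(1-u)^{-p}+1$, and as before the $+1$ does not affect any coefficient in total degree $>1$. The coefficient of $u^kv^l$ in $(1+v)^{2q}(1-u)^{-2p}$ is $\binom{2q}{l}\binom{k+2p-1}{k}$ since $[u^k](1-u)^{-2p}=\binom{k+2p-1}{k}$, and the coefficient of $u^kv^l$ in $(1+v)^q(1-u)^{-p}$ is $\binom{q}{l}\binom{k+p-1}{k}$; subtracting twice the latter from the former yields the first branch of (\ref{bigrading}) for $k+l>1$. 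The two exceptional branches $(p,q)=(1,0)$ and $(p,q)=(0,1)$ record the dimension of the single degree-$1$ component $KX_{p,q}$, which is $p=1$ or $q=1$, so they likewise come straight from the linear summand $pu+qv$; more precisely for general $(p,q)$ one has $\dim F_{p,q}^{(1,0)}=p$ and $\dim F_{p,q}^{(0,1)}=q$, and I would phrase the statement accordingly. The only real point requiring care — the ``main obstacle,'' such as it is — is bookkeeping the edge cases $\sum k_i+\sum l_j=1$ versus $>1$ so that the spurious $+1$ and the linear term are attributed to the right components, and making sure the $v_j$-exponents beyond $2$ (resp. $q$) correctly produce zero; there is no conceptual difficulty, only the need to state the formulas with the degree restrictions exactly as written.
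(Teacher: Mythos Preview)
Your approach is exactly the paper's: expand $(P-1)^2=P^2-2P+1$ with $P=\prod_i(1-u_i)^{-1}\prod_j(1+v_j)$ and read off coefficients. The only issue is a slip in your computation of the $-2P$ term. The coefficient of $u_i^{k_i}$ in $(1-u_i)^{-1}$ is $1$ for every $k_i\ge 0$, not $k_i+1$; the factor $k_i+1$ arises only from $(1-u_i)^{-2}$ in $P^2$. Hence the coefficient of $u_1^{k_1}\cdots u_p^{k_p}v_1^{l_1}\cdots v_q^{l_q}$ in $-2P$ is precisely $-2\prod_{j=1}^q(\delta_{0,l_j}+\delta_{1,l_j})$, with no $\prod_i(k_i+1)$ factor, and formula (\ref{multigrading}) is correct as written --- nothing needs to be ``absorbed'' or rewritten. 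Your remark on the degree-one branches of (\ref{bigrading}) is well taken: the intended reading is $(k,l)=(1,0)$ and $(k,l)=(0,1)$, giving $\dim F_{p,q}^{(1,0)}=p$ and $\dim F_{p,q}^{(0,1)}=q$ from the linear summand $pu+qv$.
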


\begin{proof}
Expanding the Hilbert series (\ref{Hilbert multigraded}) we obtain
\[
H(F_{p,q}({\mathfrak B}_{\text{\rm sup}}),U_p,V_q)=\sum_{i=1}^pu_i+\sum_{j=1}^qv_j+\left(\prod_{i=1}^p\frac{1}{1-u_i}\prod_{j=1}^q(1+v_j)-1\right)^2
\]
\[
=\sum_{i=1}^pu_i+\sum_{j=1}^qv_j+\prod_{i=1}^p\frac{1}{(1-u_i)^2}\prod_{j=1}^q(1+v_j)^2-2\prod_{i=1}^p\frac{1}{1-u_i}\prod_{j=1}^q(1+v_j)+1
\]
\[
=\sum_{i=1}^pu_i+\sum_{j=1}^qv_j+\sum_{k_i\geq 0}\prod_{i=1}^p(k_i+1)u_1^{k_1}\cdots u_p^{k_p}\prod_{j=1}^q(1+2v_j+v_j^2)
\]
\[
-2\sum_{k_i\geq 0}\prod_{i=1}^pu_1^{k_1}\cdots u_p^{k_p}\prod_{j=1}^q(1+v_j)+1
\]
and this gives the formula (\ref{multigrading}) for $\dim(F_{p,q}^{(\text{\bf k},\text{\bf l})})$.
Similarly, expanding the Hilbert series (\ref{Hilbert bigraded}) we obtain the formula (\ref{bigrading}) for $\dim(F_{p,q}^{(k,l)})$.
\end{proof}

In the theory of algebras with polynomial identities a special role is played by the codimension sequence.
In the case of varieties ${\mathfrak V}_{\text{\rm sup}}$ of superalgebras one considers two sequences:
\[
c_{p,q}({\mathfrak V}_{\text{\rm sup}})=\dim(P_{p,q}({\mathfrak V}_{\text{\rm sup}})),\,p,q=0,1,2,\ldots, p+q>0,
\]
where $P_{p,q}({\mathfrak V}_{\text{\rm sup}})$ is the multilinear component of $F({\mathfrak V}_{\text{\rm sup}})$ in the variables
$y_1,\ldots,y_p$ and $z_1,\ldots,z_q$;
\[
c_n({\mathfrak V}_{\text{\rm sup}})=\dim(P_n({\mathfrak V}_{\text{\rm sup}})),\,n=1,2,\ldots,
\]
where $P_n({\mathfrak V}_{\text{\rm sup}})$ is the multilinear component in $x_1,\ldots,x_n$
and each $x_i$ is equal either to $y_i$ or to $z_i$, $i=1,\ldots,n$. Obviously,
\begin{equation}\label{supercodimension}
c_{p,q}({\mathfrak V}_{\text{\rm sup}})=F_{p,q}^{(1,\ldots,1,1,\ldots,1)}({\mathfrak V}_{\text{\rm sup}}),
\end{equation}
\begin{equation}\label{ordinary codimensions}
c_n({\mathfrak V}_{\text{\rm sup}})=\sum_{p=0}^n\binom{n}{p}c_{p,n-p}({\mathfrak V}_{\text{\rm sup}}).
\end{equation}

\begin{corollary}\label{codimensions}
\begin{equation}\label{supercodim}
c_{p,q}({\mathfrak B}_{\text{\rm sup}})=\begin{cases}
2^{p+q}-2,\text{ if }p+q>1;\\
1,\text{ if }p+q=1,
\end{cases}
\end{equation}
\begin{equation}\label{codim}
c_n({\mathfrak B}_{\text{\rm sup}})=\begin{cases}
2^{2n}-2^{n+1},\text{ if }n>1;\\
2,\text{ if }n=1.
\end{cases}
\end{equation}
\end{corollary}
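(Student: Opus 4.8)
The plan is to read both codimension sequences directly off the coefficient formula (\ref{multigrading}) of Corollary \ref{coefficients of Hilbert series}, combined with the bookkeeping identities (\ref{supercodimension}) and (\ref{ordinary codimensions}).

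First I would compute $c_{p,q}({\mathfrak B}_{\text{\rm sup}})$. By (\ref{supercodimension}) it equals $\dim(F_{p,q}^{(\text{\bf k},\text{\bf l})})$ for the all-ones multidegree $\text{\bf k}=(1,\ldots,1)$, $\text{\bf l}=(1,\ldots,1)$. Assume first $p+q>1$, so that (\ref{multigrading}) applies: each of the $p$ factors $k_i+1$ equals $2$, and since every $l_j=1$ each of the $q$ factors $\delta_{2,l_j}+2\delta_{1,l_j}+\delta_{0,l_j}$ equals $2$ while each of the $q$ factors $\delta_{1,l_j}+\delta_{0,l_j}$ equals $1$. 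Hence
\[
c_{p,q}({\mathfrak B}_{\text{\rm sup}})=2^{p}\cdot 2^{q}-2\cdot 1=2^{p+q}-2 .
\]
When $p+q=1$ the relevant multidegree is linear and the last displayed formula of Corollary \ref{coefficients of Hilbert series} gives dimension $1$; this is the second line of (\ref{supercodim}).

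Next I would obtain $c_n({\mathfrak B}_{\text{\rm sup}})$ from (\ref{ordinary codimensions}). If $n>1$ then every summand has $p+(n-p)=n>1$, so $c_{p,n-p}({\mathfrak B}_{\text{\rm sup}})=2^{n}-2$ for all $0\le p\le n$, whence
\[
c_n({\mathfrak B}_{\text{\rm sup}})=\sum_{p=0}^{n}\binom{n}{p}(2^{n}-2)=(2^{n}-2)\sum_{p=0}^{n}\binom{n}{p}=(2^{n}-2)\,2^{n}=2^{2n}-2^{n+1}.
\]
For $n=1$ the two surviving terms have $p+q=1$, so $c_1({\mathfrak B}_{\text{\rm sup}})=\binom{1}{0}c_{0,1}({\mathfrak B}_{\text{\rm sup}})+\binom{1}{1}c_{1,0}({\mathfrak B}_{\text{\rm sup}})=1+1=2$, which is the second line of (\ref{codim}).

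I do not expect a real obstacle here: the argument is a substitution into (\ref{multigrading}) followed by the binomial identity $\sum_{p}\binom{n}{p}=2^{n}$. The only points requiring attention are evaluating the Kronecker-delta products correctly at $l_j=1$ and isolating the degenerate case $p+q=1$, where (\ref{multigrading}) does not apply. Note that one cannot shortcut the computation of $c_n({\mathfrak B}_{\text{\rm sup}})$ through the bigraded Hilbert series (\ref{bigrading}), since for $p\ge 2$ the homogeneous component $F_{p,q}^{(p,q)}$ is strictly larger than its multilinear part $P_{p,q}$; it is precisely the finest grading recorded in (\ref{multigrading}) that is needed.
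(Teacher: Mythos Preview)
Your proof is correct and follows essentially the same route as the paper's own proof: substitute the all-ones multidegree into (\ref{multigrading}) via (\ref{supercodimension}) to get (\ref{supercodim}), then sum over $p$ using (\ref{ordinary codimensions}) and $\sum_p\binom{n}{p}=2^n$ to get (\ref{codim}). Your version is in fact more explicit than the paper's (you spell out the Kronecker-delta evaluations and the $n=1$ case), and your closing remark about why (\ref{bigrading}) alone does not suffice is a correct and useful observation.
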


\begin{proof}
In virtue of (\ref{supercodimension}) the formula (\ref{supercodim}) for $c_{p,q}({\mathfrak B}_{\text{\rm sup}})$
is an immediate consequence of (\ref{multigrading}) when $k_1=\cdots=k_p=l_1=\cdots=l_q=1$.
Applying (\ref{ordinary codimensions}) and (\ref{supercodim}) we obtain
\[
c_n({\mathfrak B}_{\text{\rm sup}})=\sum_{p=0}^n\binom{n}{p}c_{p,n-p}({\mathfrak B}_{\text{\rm sup}})=\sum_{p=0}^n\binom{n}{p}(2^{p+q}-2)=2^n(2^n-2)
\]
for $n>1$ and $c_1({\mathfrak B}_{\text{\rm sup}})=2$ when $n=1$.
\end{proof}

\section{Finitely generated bicommutative superalgebras}

As we mentioned in the introduction, one of the leading ideas of our paper is the following.
Since the square $F^2({\mathfrak B}_{\text{\rm sup}})$ of the free bicommutative superalgebra $F({\mathfrak B}_{\text{\rm sup}})$
is an associative super-commutative superalgebra, we want to transfer to the superalgebra $F({\mathfrak B}_{\text{\rm sup}})$
properties typical for associative super-commutative superalgebras.
In what follows we shall state the classical result and immediately we shall prove its bicommutative superalgebra analogue.
For a background on commutative algebra see e.g. \cite{Atiyah-Macdonald}.

The following easy example in \cite{Drensky-Zhakhaev} shows that there are also some differences.

\begin{example}
The free bicommutative algebra $F_{1,0}=F_{1,0}({\mathfrak B}_{\text{\rm sup}})=F_1({\mathfrak B})$ is not noetherian. Its left ideal generated by the monomials
\[
y_1y_1^n,\quad n=1,2,\ldots,
\]
is not finitely generated.
\end{example}

But we shall show that the situation is completely different if we consider two-sided ideals and the picture is similar with the commutative and super-commutative case.

\subsection{Hilbert Basissatz}
The classical Hilbert Basissatz states:

\begin{theorem}\label{classical Hilbert Basissatz}
Every finitely generated associative commutative algebra is noetherian, i.e. it satisfies the ascending chain condition for its ideals.
\end{theorem}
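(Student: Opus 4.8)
The plan is to deduce the statement from the \emph{Hilbert Basis Theorem}: if $R$ is a commutative noetherian ring, then the polynomial ring $R[x]$ is again noetherian. I would first recall the standard fact that a commutative ring satisfies the ascending chain condition on ideals if and only if every ideal is finitely generated, so that being noetherian can be verified by checking finite generation of ideals. Next, since a finitely generated commutative $K$-algebra $A$ is a homomorphic image of a polynomial algebra $K[x_1,\ldots,x_n]$ for some $n$, and since the ideals of a quotient $R/J$ correspond bijectively and inclusion-preservingly to the ideals of $R$ containing $J$, the noetherian property passes to quotient rings. Hence it suffices to prove that $K[x_1,\ldots,x_n]$ is noetherian, and this follows by induction on $n$ from the Hilbert Basis Theorem, the base case $n=0$ being the field $K$, whose only ideals are $(0)$ and $K$.

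The core of the argument is the Hilbert Basis Theorem, which I would prove as follows. Let $I\subseteq R[x]$ be an ideal. For each $d\ge 0$ let $L_d$ be the subset of $R$ consisting of $0$ together with all leading coefficients of degree-$d$ polynomials in $I$. Each $L_d$ is an ideal of $R$, and $L_d\subseteq L_{d+1}$, since multiplying a polynomial by $x$ raises its degree by one without changing its leading coefficient. As $R$ is noetherian, the chain $L_0\subseteq L_1\subseteq\cdots$ stabilizes, say $L_d=L_N$ for all $d\ge N$, and each $L_d$ is finitely generated; choose, for $0\le d\le N$, finitely many polynomials $f_{d,1},\ldots,f_{d,k_d}\in I$ of degree $d$ whose leading coefficients generate $L_d$.

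It then remains to check that the finite family $\{f_{d,i}\}$ generates $I$. Given a nonzero $f\in I$ of degree $d$, set $m=\min(d,N)$; the leading coefficient of $f$ lies in $L_m$, so some $R[x]$-linear combination of the polynomials $x^{d-m}f_{m,i}$ has the same leading term as $f$. Subtracting it from $f$ produces an element of $I$ of strictly smaller degree, and iterating this reduction expresses $f$ as an $R[x]$-combination of the chosen generators. This proves the Hilbert Basis Theorem and, together with the reductions above, the theorem. I do not expect any genuine obstacle here, since the result is classical; the only points requiring care are the bookkeeping with leading coefficients in the Hilbert Basis Theorem and the two elementary stability properties used at the start --- that the noetherian property is inherited by quotients and by adjoining a single polynomial variable.
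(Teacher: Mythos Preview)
Your proof is correct and is the standard textbook argument. Note, however, that the paper does not give its own proof of this statement: Theorem~\ref{classical Hilbert Basissatz} is quoted as the classical Hilbert Basissatz, with the reader referred to \cite{Atiyah-Macdonald} for background, and only its bicommutative superalgebra analogue (Theorem~\ref{Hilbert Basissatz}) is actually proved in the paper. So there is nothing to compare your argument against beyond the classical literature, and your reduction to the Hilbert Basis Theorem via quotients of $K[x_1,\ldots,x_n]$, together with the leading-coefficient-ideal proof of that theorem, is exactly the expected proof.
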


The bicommutative super-counterpart is the following.

\begin{theorem}\label{Hilbert Basissatz}
Finitely generated bicommutative superalgebras are weakly noetherian, i.e. they satisfy the ascending chain condition for two-sided ideals.
\end{theorem}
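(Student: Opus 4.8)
The plan is to reduce to the free algebra and then realize its square as a finitely generated module over a noetherian commutative ring, so that the ascending chain condition is inherited.

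First I would note that every finitely generated bicommutative superalgebra $A$ is a homomorphic image of some $F_{p,q}({\mathfrak B}_{\text{\rm sup}})$ (with $p$ even and $q$ odd generators), and its two-sided ideals correspond bijectively, and inclusion-preservingly, to the two-sided ideals of $F_{p,q}({\mathfrak B}_{\text{\rm sup}})$ containing the kernel; hence it suffices to prove that $F_{p,q}({\mathfrak B}_{\text{\rm sup}})$ is weakly noetherian for all $p,q$. By Theorem \ref{basis} I work instead in $G:=G_{\text{\rm sup}}(X_{p,q})$; writing $R:=F_{p,q}({\mathfrak A}_{\text{\rm sup}})$ and $\omega:=\omega(R)$ we have $G=KX_{p,q}\oplus(\omega\otimes_K\omega)$ and $G^2=\omega\otimes_K\omega$. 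Since $G/G^2\cong KX_{p,q}$ is finite dimensional, in any ascending chain $J_1\subseteq J_2\subseteq\cdots$ of two-sided ideals of $G$ the subspaces $(J_n+G^2)/G^2$ stabilize; once one also knows that the chain $(J_n\cap G^2)_n$ stabilizes, a short diagram chase forces $(J_n)_n$ to stabilize. Thus everything reduces to the ascending chain condition for two-sided ideals of $G$ contained in $G^2$.

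Next I would put on $G^2=\omega\otimes_K\omega$ the structure of a module over the ring $S:=R\otimes_K R$ via the componentwise Koszul-signed action $(a\otimes b)\bullet(u\otimes v):=(-1)^{\overline{b}\,\overline{u}}(au)\otimes(bv)$, which is routinely well defined and associative. The multiplication rules defining $G_{\text{\rm sup}}$ give, for $x\in X_{p,q}$ and $w\in G^2$, the identities
\[
x\circ w=(x\otimes 1)\bullet w,\qquad w\circ x=(-1)^{\overline{x}\,\overline{w}}(1\otimes x)\bullet w,
\]
the sign in the second coming from super-commutativity in $R$. Since $S$ is generated as a unital $K$-algebra by the elements $x\otimes 1$ and $1\otimes x$, $x\in X_{p,q}$, any two-sided ideal $J$ of $G$ with $J\subseteq G^2$ — being closed under $w\mapsto x\circ w$ (it is a left ideal) and $w\mapsto w\circ x$ (it is a right ideal) — is an $S$-submodule of $G^2$; in particular it is then automatically closed under the internal products by $G^2$, since the formula for $(u_1\otimes v_1)\circ(u_2\otimes v_2)$ equals $\pm(u_2\otimes v_2)\bullet(u_1\otimes v_1)$, in accordance with Remark \ref{sypercommutativity of square of superalgebra}. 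It remains to see that $G^2$ is a noetherian $S$-module. The finite set $\{x\otimes x':x,x'\in X_{p,q}\}$ generates $G^2$ over $S$, because any basis monomial $u\otimes v$ of $G^2$ (with $u,v$ nonempty products of generators) equals $\pm(u'\otimes v')\bullet(x\otimes x')$, where $x,x'$ are the last factors of $u,v$ and $u'\otimes v'\in R\otimes_K R=S$. Restricting the action to the commutative subring $C:=K[Y_p]\otimes_K K[Y_p]\subseteq S$, a polynomial ring in $2p$ variables and hence noetherian by the classical Hilbert Basissatz (Theorem \ref{classical Hilbert Basissatz}), and using that $R\cong K[Y_p]\otimes_K F_{0,q}({\mathfrak A}_{\text{\rm sup}})$ is a free $K[Y_p]$-module of rank $\dim_K F_{0,q}({\mathfrak A}_{\text{\rm sup}})=2^q$, so that $S=R\otimes_K R$ is a free $C$-module of rank $4^q$, we conclude that $G^2$ is a finitely generated, hence noetherian, $C$-module. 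Therefore its $C$-submodules satisfy the ascending chain condition, a fortiori so do its $S$-submodules, and in particular so do the two-sided ideals of $G$ contained in $G^2$. Together with the reduction above this proves that $F_{p,q}({\mathfrak B}_{\text{\rm sup}})$, and hence every finitely generated bicommutative superalgebra, is weakly noetherian.

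The main obstacle I anticipate is essentially sign bookkeeping: one must fix the conventions for the $S$-action so that the two displayed identities relating $\circ$ to $\bullet$ hold exactly and so that $\bullet$ is associative with the Koszul signs, noting that all the relevant identifications hold only up to harmless $\pm1$ scalars — which is all that is needed for the submodule property but should be stated with care. A secondary point, easy but worth spelling out, is the reduction step: one must check precisely that simultaneous stabilization of $(J_n\cap G^2)_n$ and of the images $(J_n+G^2)/G^2$ in the finite-dimensional space $G/G^2$ forces stabilization of $(J_n)_n$ itself.
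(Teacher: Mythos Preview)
Your proposal is correct and follows essentially the same approach as the paper: both arguments reduce to the free algebra, identify it with $G_{\text{sup}}(X_{p,q})$, and show that $G^2=\omega\otimes_K\omega$ is a finitely generated module over the noetherian commutative ring $K[Y_p]\otimes_K K[Y_p]$ (equivalently, a finitely generated $K[Y_p]$-bimodule), so that two-sided ideals inside $G^2$ satisfy the ascending chain condition. Your version is more explicit about the reduction modulo $G^2$ and the sign bookkeeping, and routes the finite generation through the intermediate ring $S=R\otimes_K R$, while the paper simply lists the $K[Y_p]$-bimodule generators directly; these are organizational rather than substantive differences.
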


\begin{proof}
It is sufficient to work in the square $F_{p,q}^2({\mathfrak B}_{\text{\rm sup}})$ of the finitely generated free bicommutative superalgebra $F_{p,q}({\mathfrak B}_{\text{\rm sup}})$.
Instead, it is more convenient to work in the tensor square $\omega(F_{p,q}({\mathfrak A}_{\text{\rm sup}}))\otimes\omega(F_{p,q}({\mathfrak A}_{\text{\rm sup}}))$.
It is a finitely generated $K[Y_p]$-bimodule generated by the products
\[
y_{i_1}\otimes y_{i_2},\, y_i\otimes z_{j_1}\cdots z_{j_k},\, z_{j_1}\cdots z_{j_k}\otimes y_i,\,z_{j_1}\cdots z_{j_k}\otimes z_{j_{k+1}}\cdots z_{j_{k+l}},
\]
$k,l\geq 1$, $j_1<\cdots <j_k$, $j_{k+1}<\cdots <j_{k+l}$. The multiplication from the right by a monomial $y_{i_1}\cdots y_{i_k}\in K[Y_p]$
is realized by consecutive multiplications from the right by the the generators $y_{i_1},\ldots, y_{i_k}$ of $G_{\text{\rm sup}}(X_{p,q})$,
and similarly we can realize the multiplication from the left. Since the ideals $I\subset G_{\text{\rm sup}}^2(X_{p,q})$ of $G_{\text{\rm sup}}(X_{p,q})$
are $K[Y_p]$-subbimodules of $G^2_{\text{\rm sup}}(X_{p,q})$, they are also finitely generated as $K[Y_p]$-bimodules and hence also as ideals.
\end{proof}

\subsection{Rationality of Hilbert series}
The Hilbert-Serre theorem deals with the Hilbert series of finitely generated graded modules of $K[X_p]$.

\begin{theorem}\label{classical Hilbert-Serre}
{\rm (i)} Let $M$ be a finitely generated graded $K[Y_p]$-module. Then its Hilbert series $H(M,t)$ is a rational function of the form
\[
H(M,t)=f(t)\prod\frac{1}{1-t^{d_i}},
\]
where $f(t)$ is a polynomial with integral coefficients.

{\rm (ii)} If $M$ is ${\mathbb N}_0^p$-graded, then
\[
H(M,t_1,\ldots,t_p)=f(t_1,\ldots,t_p)\prod\frac{1}{1-t_1^{d_{1i}}\cdots t_p^{d_{pi}}}.
\]
\end{theorem}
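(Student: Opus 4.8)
The plan is to prove both parts by induction on the number $p$ of variables, the only real tool being the additivity of the Hilbert series on short exact sequences: if $0\to M'\to M\to M''\to 0$ is an exact sequence of graded $K[Y_p]$-modules with homogeneous maps of degree $0$, then $\dim M_n=\dim M'_n+\dim M''_n$ in every degree $n$, and hence $H(M,t)=H(M',t)+H(M'',t)$. The base case $p=0$ is clear: here $K[Y_0]=K$, a finitely generated graded $K$-module $M$ is finite-dimensional, and $H(M,t)$ is a polynomial with integer coefficients, which is already of the asserted form with an empty product in the denominator.

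For the inductive step I would fix $p\geq 1$ and a finitely generated graded $K[Y_p]$-module $M$, and look at the endomorphism of $M$ given by multiplication by $y_p$, which is homogeneous of degree $1$. Let $K$ be its kernel and $L=M/y_pM$ its cokernel. Since $y_pK=y_pL=0$, both $K$ and $L$ are graded modules over $K[Y_p]/(y_p)\cong K[Y_{p-1}]$; moreover $L$ is finitely generated over $K[Y_{p-1}]$ because it is a quotient of $M$, and $K$ is finitely generated because it is a submodule of the finitely generated module $M$ over the noetherian ring $K[Y_p]$ --- this is exactly where the classical Hilbert Basissatz, Theorem \ref{classical Hilbert Basissatz}, enters. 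For every $n\geq 0$ the sequence
\[
0\longrightarrow K_n\longrightarrow M_n\stackrel{y_p}{\longrightarrow}M_{n+1}\longrightarrow L_{n+1}\longrightarrow 0
\]
is exact, so $\dim K_n-\dim M_n+\dim M_{n+1}-\dim L_{n+1}=0$; multiplying by $t^{n+1}$, summing over $n\geq 0$ and using $L_0=M_0$ gives the telescoping identity $t\,H(K,t)-t\,H(M,t)+H(M,t)-H(L,t)=0$, that is,
\[
H(M,t)=\frac{H(L,t)-t\,H(K,t)}{1-t}.
\]
By the induction hypothesis $H(K,t)$ and $H(L,t)$ are rational functions with numerator in $\mathbb Z[t]$ and denominator a product of factors $1-t^{d_i}$; bringing the right-hand side over a common denominator and absorbing the extra factor $1-t=1-t^{\deg y_p}$ shows that $H(M,t)$ has the same form, which proves (i).

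Part (ii) should be handled by the identical induction carried out in the $\mathbb N_0^p$-graded category: multiplication by $y_p$ is then homogeneous of multidegree $(0,\ldots,0,1)$, the kernel $K$ and the cokernel $L$ are finitely generated $\mathbb N_0^{p-1}$-graded $K[Y_{p-1}]$-modules, and the analogous four-term exact sequence yields
\[
H(M,t_1,\ldots,t_p)=\frac{H(L,t_1,\ldots,t_p)-t_p\,H(K,t_1,\ldots,t_{p-1})}{1-t_p},
\]
so the claim follows by induction with the new denominator factor $1-t_p$.

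I expect the calculations to be purely routine --- a telescoping of dimension counts --- so the one point that must be handled carefully is the appeal to noetherianity: it is the Hilbert Basissatz that guarantees finite generation of the kernel of multiplication by $y_p$, and without it the induction would not close. A second, minor point to keep in mind is that $K$ and $L$, although introduced as graded $K[Y_p]$-modules, are in fact modules over $K[Y_{p-1}]$, so that the induction hypothesis genuinely applies to them.
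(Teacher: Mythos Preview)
The paper does not actually prove this theorem; it is quoted as the classical Hilbert--Serre theorem (with the reference to Atiyah--Macdonald given for background on commutative algebra) and then used as a black box in the proof of the bicommutative super-analogue, Theorem~\ref{Hilbert-Serre}. Your argument is the standard textbook proof---essentially Atiyah--Macdonald, Theorem~11.1---by induction on the number of variables via the four-term exact sequence associated to multiplication by the last variable, and it is correct.

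One small slip in part~(ii): the kernel $K$ and cokernel $L$, although annihilated by $y_p$ and hence modules over $K[Y_{p-1}]$, still carry the full $\mathbb N_0^p$-grading inherited from $M$; their Hilbert series therefore involve all of $t_1,\ldots,t_p$, and the displayed identity should read $H(K,t_1,\ldots,t_p)$ rather than $H(K,t_1,\ldots,t_{p-1})$. This does not break the argument: since $K$ and $L$ are finitely generated over $K[Y_{p-1}]$ and the action of $K[Y_{p-1}]$ does not move the $p$-th grading component, only finitely many $t_p$-homogeneous pieces of $K$ and $L$ are nonzero, and the induction hypothesis applies to each piece separately. Equivalently, phrase the induction hypothesis for $\mathbb N_0^r$-graded modules over a polynomial ring in $s\le r$ homogeneous variables.
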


We shall prove the following.

\begin{theorem}\label{Hilbert-Serre}
{\rm (i)} Every finitely generated graded bicommutative superalgebra $A$ has a rational Hilbert series
\[
H(A,t)=f(t)\prod\frac{1}{1-t^{d_i}},\quad f(t)\in{\mathbb Z}[t].
\]

{\rm (ii)} If $A$ is ${\mathbb N}_0^p\times {\mathbb N}_0^q$-graded, where ${\mathbb N}_0^p$ and ${\mathbb N}_0^q$ count the even and odd gradings,
respectively, then the Hilbert series $H(A,U_p,V_q)$ of $A$ is of the form
\[
H(A,U_p,V_q)=f(U_p,V_q)\prod\frac{1}{1-U_p^{d_i}}.
\]
If $A$ is generated by generators $a_1,\ldots,a_r$ of multidegree $(m_{i1},\ldots,m_{ip},n_{i1},\ldots,n_{iq})$, $i=1,\ldots,r$,
then the polynomial $f(U_p,V_q)$ satisfies
\begin{equation}\label{bound for odd degree}
\deg_{v_j}f(U_p,V_q)\leq 2\sum_{i=1}^rn_{ij}.
\end{equation}
\end{theorem}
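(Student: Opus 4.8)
The plan is to transfer everything to the square $A^2$ — which by Remark~\ref{sypercommutativity of square of superalgebra} is associative and super-commutative — and then to reduce to the classical Hilbert--Serre theorem for a polynomial ring in purely even variables. Since $A/A^2$ is spanned by the images of the generators $a_1,\dots,a_r$ it is finite-dimensional, and the exact sequence of graded vector spaces $0\to A^2\to A\to A/A^2\to 0$ gives $H(A,t)=H(A/A^2,t)+H(A^2,t)$ (and likewise in the multigraded case), with $H(A/A^2,\cdot)$ a polynomial; so it suffices to prove the assertions for $H(A^2,\cdot)$. Now $A^2$ is a two-sided ideal of $A$, so for every generator $a_i$ the left and right multiplications $L_{a_i},R_{a_i}\colon A^2\to A^2$ are well-defined linear operators, and from the super-identities of Lemma~\ref{Lemma 2.1} — super-left- and super-right-commutativity and the weak associativity (\ref{weak-associativity}) — one checks on $A^2$ the relations $L_{a_i}L_{a_j}=(-1)^{\overline{a_i}\,\overline{a_j}}L_{a_j}L_{a_i}$, $R_{a_i}R_{a_j}=(-1)^{\overline{a_i}\,\overline{a_j}}R_{a_j}R_{a_i}$, $L_{a_i}R_{a_j}=R_{a_j}L_{a_i}$, together with $L_{a_i}^2=R_{a_i}^2=0$ for odd $a_i$ (using $\operatorname{char}K\neq 2$). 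Thus the subalgebra ${\mathcal E}\subseteq\operatorname{End}_K(A^2)$ generated by $L_{a_1},\dots,L_{a_r},R_{a_1},\dots,R_{a_r}$ is a finitely generated super-commutative associative superalgebra, and $A^2$ is an ${\mathcal E}$-module.

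I would then show that $A^2$ is finitely generated as an ${\mathcal E}$-module, namely $A^2=\sum_{1\le i,j\le r}{\mathcal E}\,(a_ia_j)$. This is the combinatorial heart of the argument: by Corollary~\ref{permuting the variables} and Lemmas~\ref{lem2}, \ref{main lemma}, each basis monomial (\ref{basis of superalgebra}) of the free bicommutative superalgebra is obtained from a product of two free generators by successively applying left multiplications and right multiplications by generators; writing $A$ as a graded quotient of a free bicommutative superalgebra and pushing this through the quotient map yields the displayed description of $A^2$. I expect this step — and, in the multigraded setting, the control of the multidegrees of the intermediate monomials — to be the main obstacle.

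Finally I would apply commutative algebra to ${\mathcal E}$. The multiplications by the even generators $a_i$ commute with one another and have odd-multidegree zero, while the multiplications by the odd generators are square-zero; hence ${\mathcal E}$ is a finitely generated module over the commutative subalgebra generated by $\{L_{a_i},R_{a_i}:a_i\ \text{even}\}$, and, after a Noether normalization, over a polynomial subalgebra ${\mathcal P}=K[\theta_1,\dots,\theta_s]$ with the $\theta_l$ homogeneous of purely even multidegrees $d_l\in{\mathbb N}_0^p$. Consequently $A^2$ is a finitely generated graded ${\mathcal P}$-module, and the (multigraded) Hilbert--Serre theorem, applied as in Theorem~\ref{classical Hilbert-Serre} to ${\mathcal P}$, gives
\[
H(A^2,t)=g(t)\prod\frac{1}{1-t^{d_l}},\qquad
H(A^2,U_p,V_q)=g(U_p,V_q)\prod\frac{1}{1-U_p^{d_l}},
\]
with $g$ having integer coefficients; adding back $H(A/A^2,\cdot)$ yields (i) and the displayed shape of $H(A,U_p,V_q)$ in (ii), the denominator involving only the even variables because the $d_l$ lie in ${\mathbb N}_0^p$. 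For the bound (\ref{bound for odd degree}) I would use that, via a finite graded free resolution of $A^2$ over the purely even ring ${\mathcal P}$, the numerator is an alternating sum of monomials whose $V_q$-parts already appear among the odd-multidegrees of a set of ${\mathcal P}$-module generators of $A^2$; since such generators are of the form $e\,(a_ia_j)$ with $e$ running over a finite set of monomials in the square-zero and otherwise bounded-occurrence multiplications, and since — exactly as in $F^2({\mathfrak B}_{\text{\rm sup}})$ — an odd generator can enter a nonzero product only through the two tensor slots, each such generator has $v_j$-degree at most $2\sum_{i=1}^r n_{ij}$; as $H(A/A^2,\cdot)$ has $v_j$-degree at most $\max_i n_{ij}$, this gives $\deg_{v_j}f\le 2\sum_{i=1}^r n_{ij}$.
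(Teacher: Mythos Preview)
Your argument is correct and follows the same route as the paper: pass to $A^2$, view it as a finitely generated graded module over a commutative polynomial ring built from left and right multiplications by the generators of zero odd multidegree, and invoke the classical Hilbert--Serre theorem, with the bound on $\deg_{v_j}$ coming from the fact that each odd generator can occur at most twice in a nonzero monomial. The paper's version is more streamlined --- it decomposes $A^2$ by odd multidegree into finitely many pieces and treats each piece directly as a $K[a_1,\ldots,a_k]$-bimodule, without introducing the auxiliary endomorphism algebra ${\mathcal E}$ or passing through a Noether normalization --- but the substance is the same.
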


\begin{proof}
(ii) We shall show that (ii) follows from Theorem \ref{classical Hilbert-Serre} (ii).
Let the first $k$ generators $a_1,\ldots,a_k$ be of multidegree $(n_{i1},\ldots,n_{iq})=(0,\ldots,0)$, $i=1,\ldots,k$,
and let for the other $l=r-k$ generators $b_{k+1}=a_{k+1},\ldots,b_{k+l}=a_{k+l}$ the multidegrees $(n_{j1},\ldots,n_{jq})$ be different from $(0,\ldots,0)$, $j=k+1,\ldots,k+l$. Then
\[
H(A,U_p,V_q)=\sum_{i=1}^{k+l}u_1^{m_{i1}}\cdots u_p^{m_{ip}}v_1^{n_{i1}}\cdots v_q^{n_{iq}}+H(A^2,U_p,V_q).
\]
The condition $(n_{j1},\ldots,n_{jq})\not=(0,\ldots,0)$ implies that the generator $b_j$ participates not more than two times in each monomial in the generators and this gives that
\begin{equation}\label{Hilbert series of square of A}
H(A^2,U_p,V_q)=\sum_{n_1,\ldots,n_q}f_{(n_1,\ldots,n_q)}(U_p)v_1^{n_1}\cdots v_q^{n_q},
\end{equation}
where the degree $n_j$ of $v_j$ in $H(A^2,U_p,V_q)$ is bounded as in (\ref{bound for odd degree}).
The multigrading of $A$ implies that $A^2$ is a direct sum of ${\mathbb N}_0^p$-graded subspaces $(A^2)^{(n_1,\ldots,n_q)}$
which are multihomogeneous of multidegree $(n_1,\ldots,n_q)$ with respect to the odd grading.
As in the proof of Theorem \ref{Hilbert Basissatz} the multiplication from the right and from the left equips every component $(A^2)^{(n_1,\ldots,n_q)}$
with the structure of a $K[a_1,\ldots,a_k]$-bimodule. By the Hilbert-Serre theorem the functions $f_{(n_1,\ldots,n_q)}(U_p)$ in (\ref{Hilbert series of square of A})
are rational functions with denominators which are products of binomials $1-u_1^{d_{1i}}\cdots u_p^{d_{pi}}$.

(i) The proof of follows from Theorem \ref{classical Hilbert-Serre} (i) with similar arguments as in our proof of (ii).
\end{proof}

\subsection{Gr\"obner-Shirshov bases}
Let the semigroup $[X_p]$ of the monomials in $p$ associative commutative variables be {\it well-ordered} with ordering $\preceq$, which is {\it compatible with the multiplication},
i.e. the ordering $\preceq$ is linear, every subset of $[X_p]$ has a minimal element and if $u\prec v$ in $[X_p]$, then $uw\prec vw$ for any $w\in[X_p]$.
If $0\not=f(X_d)\in K[X_d]$, then we denote by $\text{\rm lead}(f)$ the leading monomial of $f(X_p)$ (with respect to the ordering $\preceq$).
Similarly, if $I$ is an ideal of the polynomial algebra $K[X_p]$, then $\text{\rm lead}(I)$ is the set of the leading monomials of the polynomials in $I$.
Clearly, $\text{\rm lead}(I)$ is an ideal of the multiplicative semigroup $[X_p]$.
The subset $\text{\rm Gr\"obner}(I)$ of the ideal $I$ is a {\it Gr\"obner basis} of $I$ if for every $0\not=f(X_p)\in I$
there exists $f_i(X_p)\in \text{\rm Gr\"obner}(I)$ such that $\text{\rm lead}(f_i)$ divides $\text{\rm lead}(f)$.
It follows immediately from the Hilbert Basissatz \ref{classical Hilbert Basissatz}
applied to the monomial ideal of $K[X_p]$ generated by $\text{\rm lead}(I)$ (or from the Dickson lemma \cite{Dickson})
that every ideal $I$ of $K[X_p]$ has a finite Gr\"obner basis.

\begin{theorem}\label{ordinary Groebner basis}
Any ideal $I$ of $K[X_p]$ is generated by its Gr\"obner basis $\text{\rm Gr\"obner}(I)$ and
the factor algebra $K[X_p]/I$ has a vector space basis consisting of all monomials which are not divisible by any leading monomial of the polynomials in $\text{\rm Gr\"obner}(I)$.
\end{theorem}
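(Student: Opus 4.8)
The plan is to run the classical division-with-remainder argument, phrased over the well-ordered, multiplication-compatible semigroup $([X_p],\preceq)$ and using the minimal-counterexample principle in place of ordinary induction on ${\mathbb N}$. First I would show that $\text{\rm Gr\"obner}(I)$ generates $I$ as an ideal. Let $J$ be the ideal of $K[X_p]$ generated by $\text{\rm Gr\"obner}(I)$; since a Gr\"obner basis is by definition a subset of $I$, we have $J\subseteq I$. If $I\setminus J\neq\varnothing$, pick $0\neq f\in I\setminus J$ whose leading monomial $\text{\rm lead}(f)$ is $\preceq$-minimal among those realized by the nonzero elements of $I\setminus J$; this is possible because $\preceq$ well-orders $[X_p]$. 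By the defining property of a Gr\"obner basis there is $g\in\text{\rm Gr\"obner}(I)$ with $\text{\rm lead}(g)$ dividing $\text{\rm lead}(f)$, say $\text{\rm lead}(f)=w\,\text{\rm lead}(g)$ with $w\in[X_p]$. Compatibility of $\preceq$ with multiplication gives $\text{\rm lead}(wg)=w\,\text{\rm lead}(g)=\text{\rm lead}(f)$, so for a suitable scalar $c$ the polynomial $f'=f-cwg$ lies in $I$ and satisfies either $f'=0$ or $\text{\rm lead}(f')\prec\text{\rm lead}(f)$. In the first case $f=cwg\in J$; in the second, minimality forces $f'\in J$, hence $f=f'+cwg\in J$. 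Both alternatives contradict $f\notin J$, so $I=J$.

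Next I would record that the semigroup ideal $\text{\rm lead}(I)$ of $[X_p]$ equals the monomial ideal generated by $\{\text{\rm lead}(g):g\in\text{\rm Gr\"obner}(I)\}$: one inclusion is obvious and the reverse is precisely the Gr\"obner property. Hence the set $N$ of monomials not divisible by any $\text{\rm lead}(g)$ with $g\in\text{\rm Gr\"obner}(I)$ is exactly $[X_p]\setminus\text{\rm lead}(I)$. To see that the classes of the monomials in $N$ span $K[X_p]/I$, equivalently that $K[X_p]=\text{\rm span}_K(N)+I$, I would argue again by minimal counterexample: if some nonzero $f$ lies outside the subspace $\text{\rm span}_K(N)+I$, choose one with $\text{\rm lead}(f)$ minimal. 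If $\text{\rm lead}(f)\in N$, subtracting the leading term of $f$ (which lies in $\text{\rm span}_K(N)$) produces an element still outside $\text{\rm span}_K(N)+I$ with a strictly smaller leading monomial, or zero; the former contradicts minimality, the latter puts $f$ in $\text{\rm span}_K(N)$. If $\text{\rm lead}(f)\notin N$, then $\text{\rm lead}(f)=w\,\text{\rm lead}(g)$ for some $g\in\text{\rm Gr\"obner}(I)$ and $w\in[X_p]$, and subtracting the appropriate multiple of $wg\in I$ again yields an element outside $\text{\rm span}_K(N)+I$ with strictly smaller leading monomial, or zero; both are impossible. Therefore $N$ spans $K[X_p]/I$.

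Finally, linear independence of the classes of the monomials in $N$ is immediate: a nontrivial $K$-linear relation $\sum_j c_j u_j\in I$ with distinct $u_j\in N$ would be a nonzero element of $I$ whose leading monomial is one of the $u_j$, forcing that $u_j\in\text{\rm lead}(I)$ and contradicting $u_j\in N=[X_p]\setminus\text{\rm lead}(I)$. Combined with the previous paragraph this gives the claimed basis of $K[X_p]/I$. There is no genuinely hard step; the one point I would emphasize is that the two reduction descents are justified by well-foundedness of $\preceq$, i.e.\ that every nonempty set of monomials has a $\preceq$-minimal element, rather than by induction on integers, and that compatibility of $\preceq$ with multiplication is exactly what makes $\text{\rm lead}(wg)=w\,\text{\rm lead}(g)$ and hence makes each step strictly decrease the leading monomial. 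The whole proof is simply the careful organization of these two minimal-counterexample arguments.
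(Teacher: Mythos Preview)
Your proof is correct and is precisely the standard division-with-remainder argument one finds in any textbook on Gr\"obner bases. Note, however, that the paper does not actually supply a proof of this theorem: it is stated as classical background (attributed to Buchberger and predecessors) and invoked later as a known result, so there is no ``paper's own proof'' to compare against. Your write-up would serve perfectly well as the omitted justification.
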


Gr\"obner bases were introduced by Buchberger in 1965 in his Ph.D. thesis \cite{Buchberger}.
Ideas, similar to his ideas appeared also in the work of other mathematicians,
some of them even before him, e.g. by G\"unter \cite{Gunter-1}, \cite{Gunter-2} and Hironaka \cite{Hironaka}.
But the significance of the contributions of Buchberger are undeniable.
In particular, his algorithm is in the ground of many of the computations with polynomials in commutative algebra and algebraic geometry.

Investigations in the spirit of those of Buchberger appeared independently also in noncommutative algebra.
In 1962 Shirshov developed in \cite{Shirshov} his algorithmic approach to Lie algebras.
His approach works in much larger cases and nowadays one speaks about {\it Gr\"obner-Shirshov bases}.
The case for associative algebras was developed in 1978 by Bergman in \cite{Bergman}.
See the survey by Bokut and Kolesnikov \cite{Bokut-Kolesnikov} for the history and more new developments in the theory of Gr\"obner-Shirshov bases.
See  also Bai, Chen and Zhang \cite{Bai-Chen-Zhang2022} for the Gr\"obner-Shirshov bases of the ideals of free
bicommutative algebras.

Now we shall introduce the concept of Gr\"obner-Shirshov bases for two-sided ideals of the free bicommutative superalgebra $F_{p,q}({\mathfrak B}_{\text{\rm sup}})$.
It is more convenient to work in the superalgebra $G_{\text{\rm sup}}(X_{p,q})$ instead of in $F_{p,q}({\mathfrak B}_{\text{\rm sup}})$.
Let $[X_{p,q}]$ be the commutative semigroup with zero of the monomials in the set $X_{p,q}=Y_p\cup Z_q$ of $p+q$ commuting variables
with defining relations $z_j^2=0$ for the $q$ ``odd'' variables $z_1,\ldots,z_q$.
We consider the multiplicative semigroup $X_{p,q}\cup[X_{p,q}]\otimes[X_{p,q}]$
with multiplication which up to a sign is the same as in the superalgebra $G_{\text{\rm sup}}(X_{p,q})$, i.e.
\[
x_i\circ x_j=x_i\otimes x_j,\, x_i\circ (u\otimes v)=x_iu\otimes v,\,(u\otimes v)\circ x_i=u\otimes vx_i,
\]
\[
(u_1\otimes v_1)\circ(u_2\otimes v_2)=(u_1u_2)\otimes(v_1v_2).
\]
Let $X_{p,q}\cup[X_{p,q}]\otimes[X_{p,q}]$ be well-ordered with ordering $\preceq$, which is compatible with the multiplication.
In our case this will mean that if $u\prec v$ in $X_{p,q}\cup [X_{p,q}]\otimes[X_{p,q}]$ and $x_iu,x_iv\not=0$ in $X_{p,q}\cup [X_{p,q}]\otimes[X_{p,q}]$ for $x_i\in X_{p,q}$,
then $x_iu\prec x_iv$. Similarly, if $ux_i,vx_i\not=0$, then $ux_i\prec vx_i$. If $0\not=f(X_{p,q})\in G_{\text{\rm sup}}(X_{p,q})$,
then we denote by $\text{\rm lead}(f)$ the leading monomial of $f(X_{p,q})$ (with respect to the ordering $\preceq$).
Similarly, if $I$ is a two-sided ideal of the superalgebra $G_{\text{\rm sup}}(X_{p,q})$, then $\text{\rm lead}(I)$ is the set of the leading monomials of the polynomials in $I$.
Clearly, $\text{\rm lead}(I)$ is an ideal of the semigroup $X_{p,q}\cup [X_{p,q}]\otimes[X_{p,q}]$.
The important property which we shall use is the following and it is easy to be checked. \begin{lemma}\label{propery of leading monomial} If
\[
0\not=f_1(X_{p,q}),f_2(X_{p,q})\in G_{\text{\rm sup}}(X_{p,q}),\,\text{\rm lead}(f_1(X_{p,q}))\prec\text{\rm lead}(f_2(X_{p,q}))
\]
and $x_i\text{\rm lead}(f_2(X_{p,q}))\not=0$, then
\[
\text{\rm lead}(x_if_1(X_{p,q}))\prec\text{\rm lead}(x_if_2(X_{p,q}))\text{ or }x_if_1(X_{p,q})=0.
\]
Similarly, $\text{\rm lead}(f_2(X_{p,q}))x_i\not=0$ implies that
\[
\text{\rm lead}(f_1(X_{p,q}))x_i\prec\text{\rm lead}(f_2(X_{p,q})x_i)\text{ or }f_1(X_{p,q})x_i=0.
\]
\end{lemma}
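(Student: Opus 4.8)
The plan is to deduce the lemma from the way left multiplication by a generator $x_i$ acts on the distinguished monomial basis of $G_{\text{\rm sup}}(X_{p,q})$. First I would record that a basis of $G_{\text{\rm sup}}(X_{p,q})$ is given by $X_{p,q}$ together with all tensors $u\otimes v$ of nonconstant monomials $u,v\in[X_{p,q}]$, and that for any such basis monomial $m$ the product $x_i\circ m$ is, up to a nonzero scalar, either $0$ or again a basis monomial: indeed $x_i\circ x_j=x_i\otimes x_j$ and $x_i\circ(u\otimes v)=(x_iu)\otimes v$, the latter being $0$ precisely when $x_i$ is an odd variable already dividing $u$. Two elementary properties of the assignment $m\mapsto x_i\circ m$ on basis monomials then do all the work: it is \emph{injective where it is nonzero} (for $m=u\otimes v$, $m'=u'\otimes v'$ this is the cancellation $x_iu=x_iu'\neq 0\Rightarrow u=u'$ in $[X_{p,q}]$, and the degree-one case is clear), and it is \emph{monotone where it is nonzero}, which is exactly the compatibility of $\preceq$ with the multiplication that was assumed in the definition of the ordering.

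With these in hand, let $0\neq f\in G_{\text{\rm sup}}(X_{p,q})$, write $f=\sum_m\alpha_m m$ over basis monomials with all $\alpha_m\neq 0$, and set $S(f)=\{m:\alpha_m\neq 0,\ x_i\circ m\neq 0\}$. By injectivity the monomials $x_i\circ m$ for $m\in S(f)$ are pairwise distinct and occur in $x_if$ with nonzero coefficients $\pm\alpha_m$, while every other term of $f$ is annihilated; hence $x_if=0$ if and only if $S(f)=\varnothing$, and otherwise, by monotonicity, $\text{\rm lead}(x_if)=x_i\circ\max S(f)$.

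Finally I would apply this to $f_1,f_2$. Since $x_i\,\text{\rm lead}(f_2)\neq 0$ we have $\text{\rm lead}(f_2)\in S(f_2)$, so $\max S(f_2)=\text{\rm lead}(f_2)$. If $S(f_1)=\varnothing$ then $x_if_1=0$ and we are done; otherwise $\max S(f_1)\preceq\text{\rm lead}(f_1)\prec\text{\rm lead}(f_2)$, and applying monotonicity to the pair $\max S(f_1)\prec\text{\rm lead}(f_2)$ (both nonzero after multiplying by $x_i$) gives $\text{\rm lead}(x_if_1)=x_i\circ\max S(f_1)\prec x_i\circ\text{\rm lead}(f_2)=\text{\rm lead}(x_if_2)$. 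The right-multiplication assertion follows by the symmetric argument, using $(u\otimes v)\circ x_i=u\otimes(vx_i)$ and cancellation in the second tensor factor. The only point needing attention — which I would flag as the main (though mild) obstacle — is the absence of collisions among the surviving monomials of $f$ after multiplication by $x_i$, so that $\text{\rm lead}(x_if)$ is genuinely $x_i\circ\max S(f)$ and not some smaller monomial produced by cancellation; this is precisely what the injectivity step secures.
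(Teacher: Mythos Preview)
Your argument is correct and is precisely the routine verification the paper has in mind; the paper itself does not give a proof but merely states that the lemma ``is easy to be checked''. Your explicit isolation of the two needed properties of $m\mapsto x_i\circ m$ --- injectivity where nonzero (ruling out cancellation) and monotonicity where nonzero (the assumed compatibility of $\preceq$) --- is exactly what makes the claim immediate.
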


\begin{definition}\label{definition of Groebner basis}
The subset $\text{\rm Gr\"obner}(I)$ of the two-sided ideal $I$ of $F_{p,q}({\mathfrak B}_{\text{\rm sup}})$ is a {\it Gr\"obner-Shirshov basis} of $I$
if for every $0\not=f(X_{p,q})\in I$ there exists  $f_i(X_{p,q})\in \text{\rm Gr\"obner}(I)$ such that $\text{\rm lead}(f_i)$ divides $\text{\rm lead}(f)$.
The latter means that $\text{\rm lead}(f)$ can be obtained from $\text{\rm lead}(f_i)$ by consecutive multiplications from both sides by free generators from $X_{p,q}$.
\end{definition}

The following theorem is an analogue of Theorem \ref{ordinary Groebner basis} for finitely generated free bicommutative superalgebras.

\begin{theorem} {\rm (i)} Any two-sided ideal $I$ of $F_{p,q}({\mathfrak B}_{\text{\rm sup}})$ has a finite Gr\"obner-Shirshov basis $\text{\rm Gr\"obner}(I)$;

{\rm (ii)} The ideal $I$ is generated by $\text{\rm Gr\"obner}(I)$;

{\rm (iii)} The factor superalgebra $F_{p,q}({\mathfrak B}_{\text{\rm sup}})/I$ has a vector space basis consisting of all monomials
which are not divisible by any leading monomial of the polynomials in $\text{\rm Gr\"obner}(I)$.
\end{theorem}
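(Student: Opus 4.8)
The plan is to mimic the classical Gröbner-basis argument, transferring it through the dictionary $F_{p,q}({\mathfrak B}_{\text{\rm sup}})\cong G_{\text{\rm sup}}(X_{p,q})$ of Theorem \ref{basis}, and to reduce the finiteness statement (i) to the Hilbert Basissatz for two-sided ideals proved in Theorem \ref{Hilbert Basissatz} (or, more precisely, to the fact that $\mathrm{lead}(I)$ is a finitely generated ideal of the semigroup $X_{p,q}\cup[X_{p,q}]\otimes[X_{p,q}]$). I would work throughout in $G_{\text{\rm sup}}(X_{p,q})$.

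First I would treat (i). The set $\mathrm{lead}(I)$ is an ideal of the semigroup $S:=X_{p,q}\cup[X_{p,q}]\otimes[X_{p,q}]$. Observe that $S$ is, up to the generators $X_{p,q}$ sitting in degree $1$, a homomorphic image of a polynomial-type semigroup: $[X_{p,q}]\otimes[X_{p,q}]$ is a quotient of the commutative monoid on $2(p+q)$ variables (the odd variables being nilpotent of index $2$), so every ideal of it is finitely generated — by Dickson's lemma \cite{Dickson}, or equivalently by the Hilbert Basissatz as used in the proof of Theorem \ref{Hilbert Basissatz}. Hence $\mathrm{lead}(I)$ has a finite set of generators $w_1,\dots,w_s$ (as a semigroup ideal, i.e.\ under left and right multiplications by elements of $X_{p,q}$). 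Pick $f_1,\dots,f_s\in I$ with $\mathrm{lead}(f_i)=w_i$ and set $\mathrm{Gr\ddot obner}(I):=\{f_1,\dots,f_s\}$. For any $0\neq f\in I$, $\mathrm{lead}(f)\in\mathrm{lead}(I)$ is divisible (in $S$, from both sides, by free generators) by some $w_i=\mathrm{lead}(f_i)$; this is exactly Definition \ref{definition of Groebner basis}, so $\mathrm{Gr\ddot obner}(I)$ is a finite Gröbner--Shirshov basis.

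Next, (ii) and (iii) go together via a reduction argument. Let $J$ be the two-sided ideal generated by $\mathrm{Gr\ddot obner}(I)$; clearly $J\subseteq I$. Let $N$ be the span of the monomials of $G_{\text{\rm sup}}(X_{p,q})$ not divisible by any $\mathrm{lead}(f_i)$; I claim $G_{\text{\rm sup}}(X_{p,q})=J+N$ and $J\cap N=0$, which simultaneously gives $J=I$ (so (ii)) and that the images of the monomials in $N$ form a basis of $G_{\text{\rm sup}}(X_{p,q})/I$ (so (iii)). For $G_{\text{\rm sup}}(X_{p,q})=J+N$: given $g\notin N$, its leading monomial $\mathrm{lead}(g)$ is divisible by some $\mathrm{lead}(f_i)$, say $\mathrm{lead}(g)$ is obtained from $\mathrm{lead}(f_i)$ by a sequence of one-sided multiplications by generators; applying the same sequence of multiplications to $f_i$ produces, by Lemma \ref{propery of leading monomial}, an element $h\in J$ with $\mathrm{lead}(h)=\mathrm{lead}(g)$ and the same leading coefficient (the coefficient of the leading monomial only changes by an explicit sign from the multiplication rule, which I can match by scaling). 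Then $g-\lambda h$ has strictly smaller leading monomial; since $\preceq$ is a well-ordering on $S$, iterating terminates and expresses $g$ as an element of $J$ plus an element of $N$. For $J\cap N=0$: if $0\neq g\in J\subseteq I$, then $\mathrm{lead}(g)\in\mathrm{lead}(I)$ is divisible by some $\mathrm{lead}(f_i)$, so $g\notin N$; hence $N\cap I=0$, a fortiori $N\cap J=0$.

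The main obstacle — really the only delicate point — is the well-foundedness and bookkeeping inside the reduction step of the previous paragraph. One must (a) know that $\preceq$ is a genuine well-ordering on the semigroup-with-zero $S$ compatible with the one-sided multiplications by free generators, including the subtlety that multiplication can send a monomial to $0$ (handled by the ``or $f_1(X_{p,q})x_i=0$'' clauses in Lemma \ref{propery of leading monomial}); (b) check that the chain of leading monomials produced by successive reductions is strictly decreasing, so that the process halts after finitely many steps even though $G_{\text{\rm sup}}(X_{p,q})$ is infinite-dimensional — this is where Lemma \ref{propery of leading monomial} does the work, guaranteeing that reducing the top term never resurrects a larger one; and (c) track the signs coming from the super-multiplication rule so that the cancellation $g-\lambda h$ is exact. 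None of these is hard, but they are the steps that need care; everything else is the standard Gröbner-basis template specialised to the concrete semigroup $X_{p,q}\cup[X_{p,q}]\otimes[X_{p,q}]$.
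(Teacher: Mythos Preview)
Your proposal is correct and follows essentially the same approach as the paper: the paper's own proof is extremely terse, saying only that (i) follows from the Hilbert Basissatz \ref{Hilbert Basissatz}, that (ii) follows from Lemma \ref{propery of leading monomial} ``applying the same arguments as in the case of ordinary commutative algebras'', and that (iii) ``is similar to the proof in the case of usual polynomial algebras''. You have simply spelled out what those standard arguments are --- finiteness of $\mathrm{lead}(I)$ via Dickson/Hilbert, then the usual reduction-to-normal-form argument using the well-ordering and Lemma \ref{propery of leading monomial} --- and correctly flagged the only places (well-foundedness, vanishing under multiplication, sign bookkeeping) where the super setting requires a moment's care.
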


\begin{proof} (i) As in the case of commutative algebras in Theorem \ref{ordinary Groebner basis}
the existence of a finite Gr\"obner-Shirshov basis follows from the Hilbert Basissatz \ref{Hilbert Basissatz}.

(ii) The proof follows from Lemma \ref{propery of leading monomial} applying the same arguments as in the case of ordinary commutative algebras.

(iii) The proof is similar to the proof in the case of usual polynomial algebras.
\end{proof}

\subsection{Gelfand-Kirillov dimension}
Let $A$ be an arbitrary finitely generated algebra and let $V$ be the subspace of $A$ spanned by an arbitrary finite system of generators.
The {\it growth function} of $A$ with respect to $V$ is
\[
g_V(n)=\dim(V^0+V^1+\cdots+V^n),\,n=1,2,\ldots,
\]
where $V^0=K$ for unitary algebras and $V^0=0$ for algebras without 1
and $V^n$ is spanned by all products of $n$ elements of $V$ with arbitrary decomposition of the parentheses.
The {\it Gelfand-Kirillov dimension} of $A$ is defined as
\[
\text{GKdim}(A)=\limsup_{n\to\infty}\log_n(g_V(n))
\]
and does not depend on the choice of the finite generating system of $A$, see \cite{Krause-Lenagan}.
For graded algebras with rational Hilbert series the information for the Hilbert series is sufficient to determine the Gelfand-Kirillov dimension,
see \cite[Remark, page 91 of the Russian original and page 87 of the translation]{Ufnarovskij}.

\begin{theorem}\label{GKdim and rational Hilbert series}
If the Hilbert series of an algebra is a rational function with polynomial growth of the coefficients and has a pole of the $m$-th order for $t=1$,
then the Gelfand-Kirillov dimension of the algebra is equal to $m$.
\end{theorem}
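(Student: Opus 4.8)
The plan is to reduce the computation of $\text{GKdim}(A)$ to the asymptotic behaviour of the partial sums $s_n=\sum_{k=0}^{n}a_k$ of the coefficients $a_k=\dim A^{(k)}$ of the Hilbert series $H(A,t)=\sum_{k\ge 0}a_kt^k$, and then to extract these asymptotics from the pole of $H(A,t)$ at $t=1$ by the usual partial-fraction method. \emph{First}, since $A$ is finitely generated it has a finite set of homogeneous generators, necessarily of positive degree; let $V$ be their span, with $V^0=A^{(0)}$, and let $d$ be the largest degree of a generator. Every homogeneous element of degree $k$ is a linear combination of products of at most $k$ generators, so $A^{(0)}\oplus\cdots\oplus A^{(n)}\subseteq V^0+\cdots+V^n$; conversely $V^0+\cdots+V^n\subseteq A^{(0)}\oplus\cdots\oplus A^{(dn)}$. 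Hence $s_n\le g_V(n)\le s_{dn}$ for all $n$.

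\emph{Second}, write $H(A,t)=P(t)/Q(t)$ in lowest terms with $P,Q\in\mathbb Z[t]$ and $Q(0)\ne 0$. The hypothesis that the $a_n$ grow polynomially forces the radius of convergence of the series to be at least $1$, so, using $a_n\ge0$ and Pringsheim's theorem, every pole of $H(A,t)$ has modulus $\ge 1$; the usual partial-fraction expansion then gives $a_n=\sum_{|\zeta|=1}\pi_\zeta(n)\zeta^{-n}+(\text{an exponentially small error})$, where $\zeta$ runs over the poles on the unit circle and $\pi_\zeta$ is a polynomial of degree one less than the order of $\zeta$, with nonzero leading coefficient. The key step is to show that the pole at $t=1$ has the maximal order $M$ among all poles on the unit circle. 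If not, divide the last expansion by $n^{M-1}$ to get $a_n/n^{M-1}=g(n)+O(1/n)$, where $g(n)=\sum_{\zeta}b_\zeta\zeta^{-n}$ is a nonzero trigonometric polynomial summed over the poles $\zeta\ne 1$ of order $M$; since each $\zeta\ne 1$ on the unit circle satisfies $\frac1N\sum_{n\le N}\zeta^{-n}\to0$, the sequence $g$ has Cesàro mean zero. As $a_n/n^{M-1}\ge 0$, this forces $\frac1N\sum_{n\le N}|g(n)|\to0$, hence $\frac1N\sum_{n\le N}|g(n)|^2\le\|g\|_\infty\cdot\frac1N\sum_{n\le N}|g(n)|\to0$, contradicting $\frac1N\sum_{n\le N}|g(n)|^2\to\sum_\zeta|b_\zeta|^2>0$ (orthogonality of the distinct sequences $\zeta^{-n}$ in the mean). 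Thus $M$ equals the given order $m$ of the pole at $t=1$. This is the part I expect to require the most care; the rest is routine coefficient extraction.

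\emph{Third}, the generating function of the partial sums is $H(A,t)/(1-t)$, which has a pole of order $m+1$ at $t=1$, with leading Laurent coefficient equal to that of $H(A,t)$, say $c\ne0$, and poles of order at most $m$ at the remaining points of the unit circle. Extracting coefficients as above yields $s_n=c\binom{n+m}{m}+O(n^{m-1})=\frac{c}{m!}n^m+O(n^{m-1})$. Since $s_n\ge0$ and $s_n\to\infty$ (the case of finite-dimensional $A$ being trivial, as then $H(A,t)$ is a polynomial, $m=0$ and $\text{GKdim}(A)=0$), the constant $c$ is positive, so $s_n\sim\frac{c}{m!}n^m$; in particular both $s_n$ and $s_{dn}$ are of order $\Theta(n^m)$, and the squeeze $s_n\le g_V(n)\le s_{dn}$ from the first step gives $g_V(n)=\Theta(n^m)$. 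Therefore $\text{GKdim}(A)=\limsup_{n\to\infty}\log_n g_V(n)=m$, which also recovers the statement quoted from \cite{Ufnarovskij}.
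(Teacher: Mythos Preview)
The paper does not prove this statement at all: it is quoted from Ufnarovskij \cite{Ufnarovskij} and used as a black box, so there is no argument in the paper to compare yours against. Your proof is essentially correct and is one of the standard ways to establish this well-known fact. Two small comments. First, in Step~1 you implicitly use that the grading is connected, so that a finite set of homogeneous generators of strictly positive degree exists; this holds in the paper's setting (the free bicommutative superalgebras and their quotients start in degree~$1$) and is worth stating. Second, in Step~2 the appeal to Pringsheim is misplaced---the conclusion ``all poles have modulus $\ge 1$'' already follows from the polynomial-growth hypothesis alone---and your Ces\`aro/orthogonality argument, while correct, is heavier than needed: since $a_n\ge 0$ one has $|H(r\zeta)|\le\sum_n a_n r^n=H(r)$ for $0<r<1$ and any $|\zeta|=1$, and comparing the blow-up rates $|H(r\zeta)|\asymp(1-r)^{-M}$ and $H(r)\asymp(1-r)^{-m}$ as $r\to 1^{-}$ immediately gives $M\le m$. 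With these cosmetic points noted, your argument goes through.
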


The proof of the following theorem follows immediately from Theorems \ref{Hilbert-Serre} and \ref{GKdim and rational Hilbert series}.

\begin{theorem}
The Gelfand-Kirillov dimension of every finitely generated bicommutative superalgebra is a nonnegative integer.
\end{theorem}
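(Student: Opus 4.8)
The plan is to reduce the general finitely generated case to the graded case already settled by Theorem~\ref{Hilbert-Serre}(i), via the classical passage to an associated graded algebra. Let $A$ be a finitely generated bicommutative superalgebra. First I would pick a finite system of \emph{homogeneous} (even or odd) generators $a_1,\ldots,a_r$ and set $V=Ka_1+\cdots+Ka_r$, a $\mathbb{Z}_2$-graded subspace of $A$. Writing $V^k$ for the span of all products of $k$ elements of $V$ taken with all possible arrangements of parentheses and $V^{\le n}=V+V^2+\cdots+V^n$, one forms the associated graded algebra $\operatorname{gr}(A)=\bigoplus_{n\ge 1}V^{\le n}/V^{\le n-1}$, which is well defined since the multiplication of $A$ respects the filtration $V^{\le 1}\subseteq V^{\le 2}\subseteq\cdots$.

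Next I would record two facts. First, the defining super-identities \eqref{superleft} and \eqref{superright} are multihomogeneous with respect to the length grading (each relates monomials of the same degree), so they pass to $\operatorname{gr}(A)$; hence $\operatorname{gr}(A)$ is again a bicommutative superalgebra, it inherits the $\mathbb{Z}_2$-grading from the homogeneous generators, and it is generated in degree $1$, so it is a finitely generated $\mathbb{N}$-graded bicommutative superalgebra (a degree-preserving homomorphic image of some $F_{p,q}({\mathfrak B}_{\text{\rm sup}})$). Second, the growth functions coincide: since $V^{\le 0}=0$ for an algebra without unit, $g_V^A(n)=\dim(V^{\le n})=\sum_{i=1}^n\dim\bigl(\operatorname{gr}(A)^{(i)}\bigr)$, which is exactly the growth function of $\operatorname{gr}(A)$ with respect to its degree-$1$ component. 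Because $\operatorname{GKdim}$ does not depend on the chosen finite generating space (see \cite{Krause-Lenagan}), this gives $\operatorname{GKdim}(A)=\operatorname{GKdim}(\operatorname{gr}(A))$.

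Finally I would apply the results already in hand to $\operatorname{gr}(A)$. By Theorem~\ref{Hilbert-Serre}(i) its Hilbert series has the form $H(\operatorname{gr}(A),t)=f(t)\prod_i\frac{1}{1-t^{d_i}}$ with $f(t)\in\mathbb{Z}[t]$; such a rational function has coefficients of polynomial growth and a pole of some finite order $m\ge 0$ at $t=1$. Theorem~\ref{GKdim and rational Hilbert series} then yields $\operatorname{GKdim}(\operatorname{gr}(A))=m\in\mathbb{N}_0$, and therefore $\operatorname{GKdim}(A)=m$ is a nonnegative integer. The only step that is not completely formal is the invariance of $\operatorname{GKdim}$ under the passage to $\operatorname{gr}(A)$ together with the multihomogeneity needed to keep $\operatorname{gr}(A)$ inside ${\mathfrak B}_{\text{\rm sup}}$; both are standard, and the substantive input — rationality of the Hilbert series — is precisely Theorem~\ref{Hilbert-Serre}(i).
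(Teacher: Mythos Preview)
Your proof is correct and follows the same strategy as the paper, namely combining Theorem~\ref{Hilbert-Serre} with Theorem~\ref{GKdim and rational Hilbert series}; the paper's proof is literally a one-liner citing those two results. Your passage to the associated graded algebra $\operatorname{gr}(A)$ makes explicit the reduction from an arbitrary finitely generated bicommutative superalgebra to a graded one, a step the paper leaves implicit but which is needed since Theorem~\ref{Hilbert-Serre} is stated only for graded algebras.
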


\begin{example}
As a direct consequence of (\ref{Hilbert graded})
in Theorem \ref{Hilbert series} and Theorem \ref{GKdim and rational Hilbert series} we obtain that
\[
\text{\rm GKdim}(F_{p,q}({\mathfrak B}_{\text{\rm sup}}))=2p.
\]
\end{example}

\begin{remark}
In \cite{Bai-Chen-Zhang2022} Bai, Chen and Zhang give an algorithm which allows to compute the Gelfand-Kirillov dimension of the factor-algebra $K[Y_p]/I$
if we know the leading monomials of the Gr\"obner basis of $I$.
If we identify $F_{p,q}({\mathfrak B}_{\text{\rm sup}})$ with the superalgebra $G_{\text{\rm sup}}(X_{p,q})$,
then a similar algorithm holds for the ideals of $F_{p,q}({\mathfrak B}_{\text{\rm sup}})$
if we take into account only the even part $y_1\cdots y_k\otimes y_{k+1}\cdots y_{k+m}$ of the monomials
\[
y_1\cdots y_kz_1\cdots z_{l-1} z_l\otimes y_{k+1}\cdots y_{k+m}z_{l+1}\cdots z_{l+n}
\]
in the Gr\"obner-Shirshov basis of the ideal.
\end{remark}

\begin{remark}
Since bicommutative algebras may be considered as bicommutative superalgebras with trivial odd component,
the results of this section hold also for finitely generated bicommutative algebras.
In particular, we obtain new proofs of some of the results in \cite{Bai-Chen-Zhang2022}.
Most of the proofs in \cite{Bai-Chen-Zhang2022} are based on methods typical for nonassociative ring theory
but in our approach we use classical results of commutative algebra.
\end{remark}

\section{The Specht property}

In this section we shall prove that every variety of bicommutative superalgebras has a finite basis of its polynomial super-identities. We shall follow the ideas in \cite{Drensky-Zhakhaev}
and shall work in the free bicommutative superalgebra $F({\mathfrak B}_{\text{\rm sup}})$ generated by the set of free generators $X=Y\cup Z$, where $Y=\{y_1,y_2,\ldots\}$, $Z=\{z_1,z_2,\ldots\}$.
But we shall go a little bit
further establishing as in \cite{Drensky-La Scala} the finite generation of any ideal of $F({\mathfrak B}_{\text{\rm sup}})$
in the class of ideals which are closed under the action of the semigroups of order preserving
mappings $Y\to Y$ and $Z\to Z$. As in \cite{Drensky-Zhakhaev} we shall apply the Higman-Cohen method based on the technique of partially ordered sets.

\begin{definition}\label{partially ordered set}
The partially ordered set
$(T, \preceq)$ is {\it partially well-ordered} if for every subset $S$ of $T$ there is a finite subset $S_0$ of $S$ with the property that for each $s\in S$
there is an element $s_0\in S_0$ such that $s_0\preceq s$.
\end{definition}

Let the commutative multiplicative semigroup with identity $[Y]$  be partially ordered by
\[
u_1(Y)= y_1^{a_1}\cdots y_k^{a_k}\preceq y_1^{b_1}\cdots y_m^{b_l}=u_2(Y),\,a_i,b_i\geq 0,
\]
if there exists a sequence $1\leq i_1<i_2<\cdots<i_k\leq l$, such that $u_1(y_{i_1},\ldots,y_{i_k})$ divides $u_2(y_1,\ldots,y_l)$.
In other words, if $\Phi_Y:{\mathbb N}\to{\mathbb N}$ is the semigroup of order preserving maps of $\mathbb N$
and $\Phi_Y$ acts on $[Y]$ as a semigroup of endomorphisms by
\[
\varphi(y_i)=y_{\varphi(i)},\, i=1,2,\ldots,\, \varphi\in\Phi_Y,
\]
then $u_1(Y)\preceq u_2(Y)$ if $\varphi(u_1(Y))$ divides $u_2(Y)$ for some $\varphi\in\Phi_Y$.

We extend the partial ordering $\preceq$ on $[Y]\otimes[Y]$ by
\[
u_1(Y)\otimes v_1(Y)\preceq u_2(Y)\otimes v_2(Y),
\]
if $\varphi(u_1(Y))\otimes \varphi(v_1(Y))$ divides $u_2(Y)\otimes v_2(Y)$ for some $\varphi\in\Phi_Y$.

If $[Z]$ is the commutative multiplicative semigroup with zero and identity and defining relations $z_i^2=0$, $i=1,2,\ldots$,
then we introduce a partial ordering $\preceq$ on $[Z]$ and $[Z]\otimes [Z]$ similar to the partial ordering on $[Y]$ and $[Y]\otimes[Y]$, respectively,
and $\Phi_Z$ is the corresponding semigroup of endomorphisms of $[Z]$.
We also order partially the semigroup $([Y]\times [Z])\otimes([Y]\times [Z])$:
\[
(u_1(Y),v_1(Z))\otimes(u_2(Y),v_2(Z))\preceq(u_3(Y),v_3(Z))\otimes(u_4(Y),v_4(Z))
\]
if $u_1(Y)\otimes u_2(Y)\preceq u_3(Y)\otimes u_4(Y)$ in $([Y]\otimes[Y],\preceq)$ and $v_1(Z)\otimes v_2(Z)\preceq v_3(Z)\otimes v_4(Z)$
in $([Z]\otimes[Z],\preceq)$. Again, the semigroup $\Phi=\Phi_Y\times\Phi_Z$ acts on $G_{\text{\rm sup}}(X)$
and the partial ordering $\preceq$ can be defined in terms of the action of $\Phi$ on $([Y]\times [Z])\otimes([Y]\times[Z])$.

\begin{proposition}\label{lemma Bryant-Vaughan-Lee}
The partially ordered semigroups $([Y]\otimes[Y],\preceq)$, $([Z]\otimes[Z],\preceq)$ and $(([Y]\times [Z])\otimes([Y]\times [Z]),\preceq)$ are partially well-ordered.
\end{proposition}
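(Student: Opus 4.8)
The plan is to identify $\preceq$ with a Higman-type subword embedding and then to invoke Higman's lemma together with Dickson's lemma. Recall first that the property in Definition~\ref{partially ordered set} is the usual notion of a well-quasi-order, equivalent to: every infinite sequence $s_1,s_2,\ldots$ in $T$ has indices $i<j$ with $s_i\preceq s_j$. I shall use the following classical facts: Dickson's lemma \cite{Dickson}, that ${\mathbb N}_0^k$ with the componentwise order is partially well-ordered; Higman's lemma \cite{Higman}, that if $(Q,\leq)$ is partially well-ordered, then the set $Q^{\ast}$ of finite words over $Q$, ordered by $w_1\leq w_2$ whenever there is a strictly increasing map $\psi$ of the positions of $w_1$ into the positions of $w_2$ with $(w_1)_i\leq(w_2)_{\psi(i)}$ for all $i$, is partially well-ordered; and the elementary facts that a finite direct product of partially well-ordered sets (componentwise order) is partially well-ordered, and that if $f\colon P\to Q$ satisfies $f(a)\leq f(b)\Rightarrow a\preceq b$ with $Q$ partially well-ordered, then $P$ is partially well-ordered.

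The point to watch is that $\preceq$ on $[Y]\otimes[Y]$ is \emph{not} the componentwise order on $[Y]\times[Y]$: a single, strictly increasing relabeling $\varphi\in\Phi_Y$ (strictly increasing, as the defining condition for $\preceq$ makes clear) must serve both tensor factors at once. To make Higman's lemma applicable I would encode a pair of monomials as one word over the product alphabet ${\mathbb N}_0^2$ with the componentwise order: for $u=\prod_i y_i^{a_i}$, $v=\prod_i y_i^{b_i}$ in $[Y]$ set $N(u,v)=\max\{i:(a_i,b_i)\neq(0,0)\}$ (with $N(u,v)=0$ when $u=v=1$), and put
\[
W(u\otimes v)=\bigl((a_1,b_1),(a_2,b_2),\ldots,(a_{N(u,v)},b_{N(u,v)})\bigr)\in({\mathbb N}_0^2)^{\ast}.
\]
It is essential to keep the internal letters $(0,0)$ — positions carrying no variable in either factor — since they record the gaps a strictly increasing $\varphi$ must honor; dropping them would wrongly identify, for instance, $y_1\otimes y_3$ with $y_1\otimes y_2$. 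The main step is then the equivalence
\[
u_1\otimes v_1\preceq u_2\otimes v_2\ \Longleftrightarrow\ W(u_1\otimes v_1)\leq W(u_2\otimes v_2)\ \text{in}\ ({\mathbb N}_0^2)^{\ast}.
\]
For ``$\Rightarrow$'' I would restrict a witnessing $\varphi$ to $\{1,\ldots,N(u_1,v_1)\}$; its image lies in $\{1,\ldots,N(u_2,v_2)\}$ because the last letter of $W(u_1\otimes v_1)$ is nonzero, so the entry of $(u_2,v_2)$ at $\varphi(N(u_1,v_1))$ is nonzero too and that index is at most $N(u_2,v_2)$. For ``$\Leftarrow$'' I would extend a witnessing word embedding — a strictly increasing partial map on an initial segment of ${\mathbb N}$ — to a strictly increasing map ${\mathbb N}\to{\mathbb N}$ and read off $\varphi(u_1)\mid u_2$, $\varphi(v_1)\mid v_2$. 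Then ${\mathbb N}_0^2$ is partially well-ordered by Dickson's lemma, $({\mathbb N}_0^2)^{\ast}$ by Higman's lemma, and hence $([Y]\otimes[Y],\preceq)$ is partially well-ordered by the transfer fact above.

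The case $([Z]\otimes[Z],\preceq)$ is the same, with two simplifications: monomials in $[Z]$ are square-free, and since $\Phi_Z$ consists of strictly increasing maps the relation $z_j^2=0$ is never triggered, so the same encoding now yields words over the finite poset $\{0,1\}^2$, which is trivially partially well-ordered, and Higman's lemma applies. For $(([Y]\times[Z])\otimes([Y]\times[Z]),\preceq)$ the definition says precisely that $(u_1,v_1)\otimes(u_2,v_2)\mapsto(u_1\otimes u_2,\,v_1\otimes v_2)$ is an order isomorphism onto the componentwise product $([Y]\otimes[Y])\times([Z]\otimes[Z])$, which is partially well-ordered by the product fact and the two cases just settled.

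The main obstacle is the reduction in the second paragraph: recognizing that $\preceq$ is a genuine subword order rather than a product order, and arranging the encoding so that Higman's lemma bites — concretely, keeping the ``empty'' $(0,0)$-positions inside $W$ and verifying that any word embedding can be promoted to an honest strictly increasing relabeling of ${\mathbb N}$. Once this is in place the rest is a direct appeal to Dickson's and Higman's lemmas.
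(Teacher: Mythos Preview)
Your argument is correct. The encoding of $u\otimes v$ as a single word over the product alphabet ${\mathbb N}_0^2$ (respectively $\{0,1\}^2$) is exactly the right device to capture that one strictly increasing relabeling must serve both tensor factors, and your verification of the equivalence with the Higman subword order is accurate; the transfer to the product case via the obvious order isomorphism to $([Y]\otimes[Y])\times([Z]\otimes[Z])$ is also valid.

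The paper proceeds differently. For $([Y]\otimes[Y],\preceq)$ and $([Z]\otimes[Z],\preceq)$ it simply invokes \cite[Lemma~1]{Bryant-Vaughan-Lee} (and \cite[Proposition~4.5]{Drensky-Zhakhaev}) rather than giving a direct argument, while for the mixed semigroup it spells out the standard ``product of partially well-ordered sets is partially well-ordered'' proof by hand: first the descending chain condition, then the no-infinite-antichain condition, each deduced from the two base cases via the infinite-increasing-subsequence characterization of partial well-ordering. So you and the paper swap roles: you unpack the base cases via an explicit Higman--Dickson encoding and then cite the product fact, whereas the paper cites the base cases and unpacks the product step. Your route has the advantage of being self-contained and making the mechanism (Higman's lemma over a Dickson-ordered alphabet) visible, which is useful if one later wants to extend the argument or extract bounds; the paper's route is shorter on the page by outsourcing the harder step to the literature.
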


\begin{proof}
The partial well-ordering of $([Y]\otimes[Y],\preceq)$ and $([Z]\otimes[Z],\preceq)$ follows immediately from \cite[Lemma 1]{Bryant-Vaughan-Lee},
see also \cite[Proposition 4.5]{Drensky-Zhakhaev}. For the partial well-ordering of $(([Y]\times [Z])\otimes([Y]\times [Z]),\preceq)$
it is sufficient to show that the semigroup satisfies the descending chain condition and does not contain an infinite set of incomparable elements. If
\[
(u_{11}(Y),v_{11}(Z))\otimes(u_{12}(Y),v_{12}(Z))\succeq (u_{21}(Y),v_{21}(Z))\otimes(u_{22}(Y),v_{22}(Z))\succeq\cdots,
\]
then $u_{11}(Y)\otimes u_{12}(Y)\succeq u_{21}(Y)\otimes u_{22}(Y)\succeq\cdots$. It follows from the partial well-ordering of $([Y]\otimes[Y],\preceq)$
that $u_{n1}(Y)\otimes u_{n2}(Y)=u_{n+1,1}(Y)\otimes u_{n+1,2}(Y)=\cdots$ for a sufficiently large $n$. Since
$v_{n1}(Z)\otimes v_{n2}(Z)\succeq v_{n+1,1}(Z)\otimes v_{n+1,2}(Z)\succeq\cdots$ and $([Z]\otimes[Z],\preceq)$ is partially well-ordered, we obtain that
$v_{m1}(Z)\otimes v_{m2}(Z)= v_{m+1,1}(Z)\otimes v_{m+1,2}(Z)=\cdots$ for some $m\geq n$.
Hence $(([Y]\times [Z])\otimes([Y]\times [Z]),\preceq)$ satisfies the descending chain condition.
Now, let the infinite set
\[
S=\{(u_{n1}(Y),v_{n1}(Z))\otimes(u_{n2}(Y),v_{n2}(Z))\mid n=1,2,\ldots\}
\]
consist of incomparable elements. Since $([Y]\otimes[Y],\preceq)$ is partially well-ordered, we obtain that the set $S$ contains a subset
\[
S_0=\{(u_{n_i1}(Y),v_{n_i1}(Z))\otimes(u_{n_i2}(Y),v_{n_i2}(Z))\mid n_1<n_2<\cdots\}
\]
such that $u_{n_11}(Y)\otimes u_{n_12}(Y)\preceq u_{n_21}(Y)\otimes u_{n_22}(Y)\preceq\cdots$.
Now the partial well-ordering of $([Z]\otimes[Z],\preceq)$ implies
that the set $\{v_{n_11}(Z)\otimes v_{n_12}(Z),v_{n_21}(Z)\otimes v_{n_22}(Z),\ldots\}$
contains an ordered pair $v_{n_p1}(Z)\otimes v_{n_p2}(Z)\preceq v_{n_q1}(Z)\otimes v_{n_q2}(Z)$, $p<q$. Hence
\[
(u_{n_p1}(Y),v_{n_p1}(Z))\otimes (u_{n_p2}(Y),v_{n_p2}(Z))\preceq (u_{n_q1}(Y),v_{n_q1}(Z))\otimes (u_{n_q2}(Y),v_{n_q2}(Z))
\]
and the set $S$ contains comparable elements.
\end{proof}

In the sequel we shall consider the set $([Y]\times [Z])\otimes([Y]\times [Z])$ equipped with the above
partial ordering $\preceq$. Now we shall define one more linear ordering $\leq$ on $[Y]\otimes[Y]$ and $[Z]\otimes[Z]$
and then we shall extend it on $([Y]\times [Z])\otimes([Y]\times [Z])$. If
\[
Y^a\otimes Y^b=y_1^{a_1}\cdots y_m^{a_m}\otimes y_1^{b_1}\cdots y_m^{b_m},\,
Y^c\otimes Y^d=y_1^{c_1}\cdots y_m^{c_m}\otimes y_1^{d_1}\cdots y_m^{d_m},
\]
then $Y^a\otimes Y^b< Y^c\otimes Y^d$ if:

$a_i<c_i$ for some $i$ and $a_j=c_j$
for $j=i+1,\ldots,m$,

or $Y^a=Y^c$, $b_i<d_i$ for some $i$ and $b_j=d_j$ for $j=i+1,\ldots,m$.

Similarly we order $[Z]\otimes[Z]$ and extend the ordering $\leq$ to $([Y]\times [Z])\otimes([Y]\times [Z])$ by
\[
(u_1(Y),v_1(Z))\otimes(u_2(Y),v_2(Z))< (u_3(Y),v_3(Z))\otimes(u_4(Y),v_4(Z))
\]
\[
\text{if }u_1(Y)\otimes u_2(Y)<u_3(Y)\otimes u_4(Y)
\]
\[
\text{or }u_1(Y)\otimes u_2(Y)=u_3(Y)\otimes u_4(Y)\text{ and }v_1(Z)\otimes v_2(Z)<v_3(Z)\otimes v_4(Z).
\]
Obviously the set $([Y]\times [Z])\otimes([Y]\times [Z])$ is well-ordered. If
\[
f(Y,Z)=\sum_{i=1}^k\vartheta_{(a^{(i)},b^{(i)},c^{(i)},d^{(i)})}Y^{a^{(i)}}Z^{b^{(i)}}\otimes Y^{c^{(i)}}Z^{d^{(i)}},
\quad 0\not=\vartheta_{(a^{(i)},b^{(i)},c^{(i)},d^{(i)})}\in K,
\]
and $Y^{a^{(1)}}Z^{b^{(1)}}\otimes Y^{c^{(1)}}Z^{d^{(1)}}>\cdots >Y^{a^{(k)}}Z^{b^{(k)}}\otimes Y^{c^{(k)}}Z^{d^{(k)}}$, then we call the monomial
\[
\text{wt}(f(Y,Z))=Y^{a^{(1)}}Z^{b^{(1)}}\otimes Y^{c^{(1)}}Z^{d^{(1)}}
\]
the {\it weight} of $f(Y,Z)$. It is easy to see, that if $f(Y,Z)$ belongs to the square $G^2_{\text{\rm sup}}(X)$
of the superalgebra $G_{\text{\rm sup}}(X)$ and $x\circ \text{\rm wt}(f(Y,Z))\not=0$ for $x\in X=Y\cup Z$, then
\begin{equation}\label{multiplication of weight-1}
\text{wt}(x\circ f(Y,Z))=x\circ\text{wt}(f(Y,Z)).
\end{equation}
Similarly, if $\text{\rm wt}(f(Y,Z))\circ x\not=0$, then
\begin{equation}\label{multiplication of weight-2}
\text{wt}(f(Y,Z)\circ x)=\text{wt}(f(Y,Z))\circ x.
\end{equation}
For all $\varphi\in\Phi=\Phi_Y\times \Phi_Z$ we have that
\begin{equation}\label{action on weight by varphi}
\text{wt}(\varphi(f(Y,Z)))=\varphi(\text{wt}(f(Y,Z))).
\end{equation}

\begin{lemma}\label{key lemma}
Let $f(Y,Z)$ and $g(Y,Z)$ be two nonzero polynomials in the square
$G^2_{\text{\rm sup}}(X)$ of the bicommutative superalgebra $G_{\text{\rm sup}}(X)$ and let $I$ be the ideal of $G_{\text{\rm sup}}(X)$ generated by all $\varphi(f(Y,Z))$,
$\varphi\in\Phi=\Phi_Y\times \Phi_Z$. If $\text{\rm wt}(f(Y,Z))\preceq \text{\rm wt}(g(Y,Z))$,
then there is an element $h(Y,Z)$ in the ideal $I$ such that $\text{\rm wt}(h)=\text{\rm wt}(g)$.
\end{lemma}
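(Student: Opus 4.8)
The plan is to use the partial-ordering hypothesis to replace $f(Y,Z)$ by a suitable $\varphi(f(Y,Z))\in I$ whose weight already divides $\text{\rm wt}(g(Y,Z))$, and then to push that weight up to $\text{\rm wt}(g(Y,Z))$ by multiplying on both sides by the free generators of $X=Y\cup Z$, tracking the weight throughout by means of the identities $(\ref{multiplication of weight-1})$, $(\ref{multiplication of weight-2})$ and $(\ref{action on weight by varphi})$.

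First I would unwind the relation $\preceq$. Write $\text{\rm wt}(f(Y,Z))=m_1\otimes m_2$ and $\text{\rm wt}(g(Y,Z))=n_1\otimes n_2$ with $m_i,n_i\in[Y]\times[Z]$. By the definition of $\preceq$ on $([Y]\times[Z])\otimes([Y]\times[Z])$ — separating even and odd parts and using the definitions of $\preceq$ on $[Y]\otimes[Y]$ and on $[Z]\otimes[Z]$ — the hypothesis $\text{\rm wt}(f(Y,Z))\preceq\text{\rm wt}(g(Y,Z))$ means precisely that there is $\varphi=(\varphi_Y,\varphi_Z)\in\Phi=\Phi_Y\times\Phi_Z$ such that $\varphi(m_1)$ divides $n_1$ and $\varphi(m_2)$ divides $n_2$ (componentwise in $[Y]$ and in $[Z]$). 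Since $g(Y,Z)\neq 0$ the monomials $n_1,n_2$ are nonzero, hence so are $\varphi(m_1),\varphi(m_2)$; by $(\ref{action on weight by varphi})$ we then have $\text{\rm wt}(\varphi(f(Y,Z)))=\varphi(\text{\rm wt}(f(Y,Z)))=\varphi(m_1)\otimes\varphi(m_2)$, and $\varphi(f(Y,Z))$ lies in $I$ by construction of $I$.

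Next I would set $p_1=n_1/\varphi(m_1)$ and $p_2=n_2/\varphi(m_2)$; these are again (odd-square-free) monomials, and I would write them as products of free generators $p_1=x'_1\cdots x'_s$, $p_2=x''_1\cdots x''_t$ with $x'_i,x''_j\in X$. Inside the two-sided ideal $I$ form
\[
g'=\bigl((\cdots(\varphi(f(Y,Z))\circ x''_1)\circ x''_2)\cdots\bigr)\circ x''_t,\qquad
h=x'_1\circ\bigl(x'_2\circ(\cdots(x'_s\circ g')\cdots)\bigr).
\]
Applying $(\ref{multiplication of weight-2})$ step by step to the successive right multiplications in $g'$, and then $(\ref{multiplication of weight-1})$ step by step to the successive left multiplications in $h$ — using the super-commutativity of $F({\mathfrak A}_{\text{\rm sup}})$ to keep the tensor factors in normal form — one obtains $\text{\rm wt}(g')=\varphi(m_1)\otimes n_2$ and then $\text{\rm wt}(h)=n_1\otimes n_2=\text{\rm wt}(g(Y,Z))$. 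Since $h\in I$, this is the assertion.

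The only point requiring care — and the main (mild) obstacle — is that $(\ref{multiplication of weight-1})$ and $(\ref{multiplication of weight-2})$ are valid only when the relevant product of monomials is nonzero, so I would verify that every partial product occurring along the way is again a nonzero monomial: $\varphi(m_2)\,x''_1\cdots x''_j$ divides $n_2$ and $x'_j\cdots x'_s\,\varphi(m_1)$ divides $n_1$, and every divisor of the nonzero monomials $n_1,n_2$ is nonzero. Beyond this, the argument is a direct bookkeeping with the weight function, entirely parallel to the corresponding step in \cite{Drensky-Zhakhaev} and \cite{Drensky-La Scala}.
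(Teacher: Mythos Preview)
Your argument is correct and follows essentially the same route as the paper's own proof: choose $\varphi\in\Phi$ so that $\varphi(\text{\rm wt}(f))$ divides $\text{\rm wt}(g)$, then reach $\text{\rm wt}(g)$ from $\varphi(f)\in I$ by successive left- and right-multiplications by generators in $X$, tracking the weight via $(\ref{multiplication of weight-1})$, $(\ref{multiplication of weight-2})$ and $(\ref{action on weight by varphi})$. Your explicit verification that every intermediate product stays nonzero (so that $(\ref{multiplication of weight-1})$ and $(\ref{multiplication of weight-2})$ actually apply) is, if anything, more careful than the paper's own write-up.
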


\begin{proof}
Let
\[
\text{\rm wt}(f)=Y^{a'}Z^{b'}\otimes Y^{c'}Z^{d'},\,\text{\rm wt}(g)=Y^{a''}Z^{b''}\otimes Y^{c''}Z^{d''}
\]
and let $\varphi\in\Phi$ be such that the monomial
$\varphi(\text{\rm wt}(f))=\varphi(Y^{a'}Z^{b'}\otimes Y^{c'}Z^{d'})$
divides the monomial $Y^{a''}Z^{b''}\otimes Y^{c''}Z^{d''}$.
Hence there exists a monomial 
\[
Y^mZ^n\otimes Y^pZ^q\in ([Y]\times[Z])\otimes([Y]\times[Z])
\]
such that
\[
(Y^mZ^n\otimes Y^pZ^q)\circ \varphi(Y^{a'}Z^{b'}\otimes Y^{c'}Z^{d'})=\pm\text{\rm wt}(g).
\]
Since the ideal $I$ is closed under the endomorphism $\varphi$ of $G_{\text{\rm sup}}(X)$, we obtain that $\varphi(f)$ belongs to $I$.
By (\ref{action on weight by varphi}) $\text{wt}(\varphi(f))=\varphi(\text{wt}(f))$.
The weight of $\text{wt}(g)$ can be obtained from $\varphi(\text{wt}(f))$ by consequent right- and left-multiplications by elements of $X$.
The same multiplications produce a polynomial $h\in I$. By (\ref{multiplication of weight-1}) and (\ref{multiplication of weight-2})
\[
(Y^mZ^n\otimes Y^pZ^q)\circ\varphi(\text{\rm wt}(f))=\text{\rm wt}(h)=\text{\rm wt}(g).
\]
\end{proof}

\begin{proposition}\label{f.g.}
Let $I$ be an ideal of $G_{\text{\rm sup}}(X)$ which is a $\Phi$-ideal,
i.e. it is invariant under the action of the semigroup $\Phi_Y\times \Phi_Z$. Then $I$ is finitely generated as a $\Phi$-ideal.
\end{proposition}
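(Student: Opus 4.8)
The plan is to follow the Higman--Cohen method as in \cite{Drensky-Zhakhaev}, using Proposition~\ref{lemma Bryant-Vaughan-Lee} and Lemma~\ref{key lemma} as the two combinatorial inputs. The argument will be by contradiction: if $I$ were not finitely generated as a $\Phi$-ideal, one would produce an infinite sequence of elements of $I$ with strictly ``non-redundant'' behaviour, and then the partial well-ordering of weights together with Lemma~\ref{key lemma} would force a redundancy.

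First I would dispose of the degree-one part so as to reduce to the case $I\subseteq G^2_{\text{\rm sup}}(X)$, where weights are defined. The projection of $I$ onto the linear component $G_{\text{\rm sup}}(X)/G^2_{\text{\rm sup}}(X)\cong KX=KY\oplus KZ$ is a $\Phi$-submodule, and every $\Phi$-submodule here is finitely generated as a $\Phi$-module: the only $\Phi$-submodules of $KY$ are $0$, the augmentation subspace $\{\sum c_iy_i:\sum c_i=0\}=\langle y_1-y_2\rangle_\Phi$, and $KY=\langle y_1\rangle_\Phi$, likewise for $KZ$, and an extension of finitely generated $\Phi$-modules is finitely generated. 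Lifting finitely many $\Phi$-module generators to elements $g_1,\dots,g_m\in I$, one checks that $I$ is generated as a $\Phi$-ideal by $g_1,\dots,g_m$ together with any $\Phi$-ideal generating set of $I\cap G^2_{\text{\rm sup}}(X)$; so it suffices to prove the statement for $\Phi$-ideals contained in $G^2_{\text{\rm sup}}(X)$.

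Assume then that such an $I$ is nonzero but not finitely generated as a $\Phi$-ideal, and construct $f_1,f_2,\dots\in I$ recursively: given $f_1,\dots,f_n$, the $\Phi$-ideal they generate is a proper subset of $I$, so one may choose $f_{n+1}$ in $I$ outside it of minimal weight with respect to the linear well-ordering $\leq$ on $([Y]\times[Z])\otimes([Y]\times[Z])$. By Proposition~\ref{lemma Bryant-Vaughan-Lee} the sequence $\text{\rm wt}(f_1),\text{\rm wt}(f_2),\dots$ in the partially well-ordered set $(([Y]\times[Z])\otimes([Y]\times[Z]),\preceq)$ contains indices $i<j$ with $\text{\rm wt}(f_i)\preceq\text{\rm wt}(f_j)$. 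Lemma~\ref{key lemma}, applied to the $\Phi$-ideal generated by $f_i$ (which lies inside the $\Phi$-ideal generated by $f_1,\dots,f_{j-1}$), yields $h$ in that $\Phi$-ideal with $\text{\rm wt}(h)=\text{\rm wt}(f_j)$. Writing $\alpha,\beta\in K^\times$ for the coefficients of the monomial $\text{\rm wt}(f_j)$ in $f_j$ and in $h$, the element $f_j-(\alpha/\beta)h$ lies in $I$, lies outside the $\Phi$-ideal generated by $f_1,\dots,f_{j-1}$ (otherwise so would $f_j$), hence is nonzero, yet has weight strictly less than $\text{\rm wt}(f_j)$ in the ordering $\leq$ --- contradicting the choice of $f_j$. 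Therefore $I$ is finitely generated as a $\Phi$-ideal.

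I expect the serious content to have been absorbed already into Proposition~\ref{lemma Bryant-Vaughan-Lee} and Lemma~\ref{key lemma}: the former is what guarantees the ``ascending pair'' $i<j$ (this is the standard well-quasi-order reformulation of Definition~\ref{partially ordered set}), while the latter rests on the stability formulas (\ref{multiplication of weight-1}), (\ref{multiplication of weight-2}) and (\ref{action on weight by varphi}) for weights under one-sided multiplications and under $\Phi$. With those available, the only thing requiring care in the present proof is the reduction in the second paragraph, which is routine but should be written out so that the passage between ``$\Phi$-ideal'' and ``$\Phi$-submodule'' is clean.
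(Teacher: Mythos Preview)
Your argument is correct and follows essentially the same Higman--Cohen strategy as the paper: reduce to $I\subseteq G^2_{\text{\rm sup}}(X)$, then combine the partial well-ordering of weights (Proposition~\ref{lemma Bryant-Vaughan-Lee}) with Lemma~\ref{key lemma} to force a contradiction. The only cosmetic difference is in the treatment of the linear part: the paper reduces the top index of the $Y$-part (and then the $Z$-part) directly by subtracting $\Phi$-translates of a fixed element of minimal top index, whereas you invoke the classification of $\Phi$-submodules of $KY$ and $KZ$; your classification is valid for the non-decreasing reading of ``order preserving'' (which is the natural one here, and under which any nonzero $v\in KY$ with $\sum c_i\neq 0$ generates $KY$ while any $v$ with $\sum c_i=0$ generates the augmentation subspace), though if one restricts to strictly increasing maps there are further submodules such as $\mathrm{span}\{y_i:i\geq m\}$ --- all still cyclic, so the conclusion you need survives either way.
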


\begin{proof}
Let $\overline{I}=I/(I\cap G^2_{\text{\rm sup}}(X))$ be the vector space of the elements of $I$ modulo $G^2_{\text{\rm sup}}(X)$ and let $\overline{I}\not=0$.
The nonzero elements of $\overline{I}$ are of the form $u_{\alpha}(Y)+v_{\beta}(Z)$, 
\[
u_{\alpha}(Y)=\sum_{i=1}^{n_{\alpha}}\alpha_iy_i,\,v_{\beta}(Z)=\sum_{j=1}^{n_{\beta}}\beta_jz_j,\,\alpha_i,\beta_j\in K,
\]
where $u_{\alpha}(Y)\not=0$ (and $\alpha_{n_{\alpha}}\not=0$) or $v_{\beta}(Z)\not=0$ (and $\beta_{n_{\beta}}\not=0$).   

Let $\text{\rm wt}(u_{\alpha}(Y))=y_{n_{\alpha}}$, let $y_m$ be the minimal $\text{\rm wt}(u_{\alpha}(Y))$ for
$u_{\alpha}(Y)+v_{\beta}(Z)\in\overline{I}$, $u_{\alpha}(Y)\not=0$, and let
\[
f_{\beta'}=u_{\alpha'}(Y)+v_{\beta'}(Z)=\alpha'_my_m+\cdots+v_{\beta'}(Z)\in\overline{I},\,\alpha'_m\not=0.
\]
Subtracting several times from any $u_{\alpha}(Y)+v_{\beta}(Z)$ suitable $\gamma\varphi_Y(f_{\beta'})$, $\varphi_Y\in\Phi_Y$, $\gamma\in K$,
we obtain that the generators of $\overline{I}$ can be reduced to elements
\[
\sum_{i=1}^m\alpha_iy_i+v_{\beta}(Z).
\]
For each $i=1,\ldots,m$, we choose $f_i=u_i(Y)+v_i(Z)\in \overline{I}$
with $\text{\rm wt}(u_i(Y))=y_i$ and a minimal $\text{\rm wt}(v_i(Z))=n_i$.
Let $n=\max\{n_1,\ldots,n_m\}$. Then by similar arguments we conclude
that under the action of $\Phi$ the ideal $\overline{I}$ is generated by elements of the form
\[
u_{\alpha}(Y)+v_{\beta}(Z)=\sum_{i=1}^m\alpha_iy_i+\sum_{j=1}^n\beta_jz_j.
\]
Since these $u_{\alpha}(Y)+v_{\beta}(Z)$ span a finite dimensional vector space, the $\Phi$-ideal $\overline{I}$ is finitely generated.
Hence we have to establish the finite generation of the $\Phi$-ideals $I$ which are in $G^2_{\text{\rm sup}}(X)$.
By Proposition \ref{lemma Bryant-Vaughan-Lee} the set of weights
\[
\{\text{\rm wt}(f(Y,Z))\mid f(Y,Z)\in I\}
\]
is partially well-ordered. Let
\[
\{\text{\rm wt}(f_1(Y,Z)),\ldots,\text{\rm wt}(f_n(Y,Z))\},\, f_1,\ldots,f_n\in I,
\]
be the finite subset of the minimal weights of the elements in $I$.
We shall complete the proof if we establish that $f_1,\ldots,f_n$ generate the $\Phi$-ideal $I$.
Let $J$ be the $\Phi$-ideal generated by the polynomials $f_1,\ldots,f_n$ and let us assume that $J\subsetneqq I$.
Let $g\in I\setminus J$ be such that its weight $\text{\rm wt}(g)$ is minimal in the set $\{\text{\rm wt}(f)\mid f\in I\setminus J\}$.
Clearly, $\text{\rm wt}(f_i)\preceq\text{\rm wt}(g)$ for some $f_i$. By Lemma \ref{key lemma} there is an element $h$ in $J$
such that $\text{\rm wt}(h)=\text{\rm wt}(g)$.
If $h\not=g$, then $\text{\rm wt}(g-h)\prec\text{\rm wt}(g)$ and $g-h\in J$ which is a contradiction. Hence $J=I$.
\end{proof}

\begin{remark}
If we consider the weights of the elements of a $\Phi$-ideal $I$ of the free bicommutative superalgebra $F({\mathfrak B}_{\text{\rm sup}})$
we can develop the theory of Gr\"obner-Shirshov bases in the class of $\Phi$-ideals in the spirit of \cite{Drensky-La Scala}.
Proposition \ref{f.g.} shows that $\Phi$-ideals have always finite $\Phi$-Gr\"obner-Shirshov bases.
\end{remark}

The following theorem is the main result of this section.

\begin{theorem}\label{Specht problem for our superalgebras}
The variety ${\mathfrak B}_{\text{\rm sup}}$ of all bicommutative superalgebras over any field of characteristic different from $2$ satisfies the Specht property,
i.e. every variety of bicommutative
superalgebras can be defined by a finite number of super-identities.
\end{theorem}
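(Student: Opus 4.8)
The plan is to deduce the Specht property directly from Proposition \ref{f.g.} via the usual dictionary between subvarieties and verbal ideals. Recall that a variety satisfies the Specht property if and only if the relatively free algebra satisfies the ascending chain condition on its verbal ideals (the two-sided ideals invariant under all grading-preserving endomorphisms), equivalently if and only if every such verbal ideal is finitely generated as a verbal ideal. So it suffices to prove this finiteness statement for $F({\mathfrak B}_{\text{\rm sup}})$ and then to rephrase it in terms of super-identities.

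First I would note that, under the identification $F({\mathfrak B}_{\text{\rm sup}})\cong G_{\text{\rm sup}}(X)$ of Theorem \ref{basis}, the semigroup $\Phi=\Phi_Y\times\Phi_Z$ of order-preserving maps $Y\to Y$ and $Z\to Z$ acts on $F({\mathfrak B}_{\text{\rm sup}})$ by grading-preserving endomorphisms. Hence every verbal ideal $I$ of $F({\mathfrak B}_{\text{\rm sup}})$ is in particular a $\Phi$-ideal, and Proposition \ref{f.g.} supplies finitely many elements $f_1,\dots,f_m\in I$ that generate $I$ as a $\Phi$-ideal.

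Next I would upgrade this to generation as a verbal ideal. Let $T$ be the verbal ideal generated by $f_1,\dots,f_m$, that is, the two-sided ideal generated by all $\psi(f_i)$ with $\psi$ a grading-preserving endomorphism of $F({\mathfrak B}_{\text{\rm sup}})$. Since $\Phi$ consists of such endomorphisms, $T$ contains the $\Phi$-ideal generated by $f_1,\dots,f_m$, which is $I$; and since $I$ is itself a verbal ideal containing $f_1,\dots,f_m$, we get $T\subseteq I$. Thus $I=T$ is finitely generated as a verbal ideal, and since $I$ was arbitrary, $F({\mathfrak B}_{\text{\rm sup}})$ admits no infinite strictly ascending chain of verbal ideals. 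Finally, given a subvariety ${\mathfrak U}\subseteq{\mathfrak B}_{\text{\rm sup}}$, its verbal ideal in $F({\mathfrak B}_{\text{\rm sup}})$ is generated by finitely many polynomials $g_1,\dots,g_r$, and then ${\mathfrak U}$ is defined, among all ${\mathbb Z}_2$-graded nonassociative algebras, by the finitely many super-identities (\ref{superleft}), (\ref{superright}) together with $g_1=\dots=g_r=0$; hence ${\mathfrak U}$ is finitely based, which is the assertion of the theorem.

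Almost all of the real content sits in Proposition \ref{f.g.}, which in turn rests on the Higman--Cohen machinery encoded in Proposition \ref{lemma Bryant-Vaughan-Lee} and Lemma \ref{key lemma}; granting these, no serious obstacle remains. The two points that deserve explicit care are: (a) verifying that $\Phi$ acts by grading-preserving endomorphisms in a way compatible with $F({\mathfrak B}_{\text{\rm sup}})\cong G_{\text{\rm sup}}(X)$, so that verbal ideals really are $\Phi$-ideals; and (b) the elementary but necessary remark that the verbal ideal generated by a set $S$ coincides with the two-sided ideal generated by all endomorphic images of the elements of $S$, which is what makes the chain of inclusions $I\subseteq T\subseteq I$ above legitimate.
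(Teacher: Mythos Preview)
Your proposal is correct and follows essentially the same approach as the paper: reduce to Proposition \ref{f.g.} by observing that every T-ideal (verbal ideal) of $F({\mathfrak B}_{\text{\rm sup}})\cong G_{\text{\rm sup}}(X)$ is in particular a $\Phi$-ideal, hence finitely generated as such, and therefore finitely generated as a T-ideal. You are simply more explicit than the paper about the sandwich argument $I\subseteq T\subseteq I$ that upgrades $\Phi$-generation to verbal-ideal generation, which the paper leaves implicit.
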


\begin{proof}
Since the variety of all bicommutative superalgebras is defined by two super-identities
it is sufficient to show that the T-ideal of super-identities $T({\mathfrak V}_{\text{\rm sup}})\subset F({\mathfrak B}_{\text{\rm sup}})$
of any subvariety ${\mathfrak V}_{\text{\rm sup}}$ of ${\mathfrak B}_{\text{\rm sup}}$ is finitely generated.
Identifying the superalgebras $F({\mathfrak B}_{\text{\rm sup}})$ and $G_{\text{\rm sup}}(X)$
it is sufficient to show that the image of $T({\mathfrak V}_{\text{\rm sup}})$ in $G_{\text{\rm sup}}(X)$
is finitely generated in the class of all ideals which are closed under the action of all endomorphisms of $G_{\text{\rm sup}}(X)$.
Hence the proof follows immediately from Proposition \ref{f.g.}.
\end{proof}

Let $A$ be an associative algebra and let $V$ be a vector space which generates $A$ as an algebra.
The element $f(x_1,\ldots,x_n)$ of the free associative algebra $K\langle X\rangle$ is a {\it weak polynomial identity for the pair} $(A,V)$ if
\[
f(v_1,\ldots,v_n)=0\text{ for all }v_1,\ldots,v_n\in V.
\]
The set $T(A,V)$ of all weak polynomial identities of the pair $(A,V)$ is called a {\it weak T-ideal}
and is an ideal of $K\langle X\rangle$ closed under all linear endomorphisms of $K\langle X\rangle$.
(This means that if $f(x_1,\ldots,x_n)\in T(A,V)$ and $u_1,\ldots,u_n$ are linear combinations of elements in $X$, then $f(u_1,\ldots,u_n)\in T(A,V)$.)
Weak polynomial identities were introduced by Razmyslov \cite{Ra1, Ra2} in his approach to solve two important problems:
to find the polynomial identities of the algebra $M_2(K)$ of $2\times 2$ matrices over a field $K$ of characteristic 0
and to prove that for any $d>2$ there exist central polynomials for $M_d(K)$.
In the original approach of Razmyslov the subspace $V$ has an additional structure of a Lie algebra,
i.e. it is closed with respect to the commutator $[v_1,v_2]=v_1v_2-v_2v_1$, $v_1,v_2\in V$, and the weak T-ideal is closed under Lie substitutions.
Similarly, if $V$ is a Jordan algebra under the operation $v_1\circ v_2=v_1v_2+v_2v_1$, $v_1,v_2\in V$,
then the weak T-ideal is closed under Jordan substitutions.

By analogy we introduce the notion of weak polynomial super-identities for bicommutative superalgebras.
If $A$ is a bicommutative superalgebra and $V=V_0\oplus V_1$ is a ${\mathbb Z}_2$-graded vector subspace which generates $A$ as
a superalgebra, then, as in the associative case,
the element $f(Y,Z)\in F({\mathfrak B}_{\text{\rm sup}})$ is called a {\it weak polynomial super-identity for the pair $(A,V)$}
if $\varphi(f(Y,Z))=0$ for all homomorphisms $\varphi:
F({\mathfrak B}_{\text{\rm sup}})\to A$ such that $\varphi(Y)\subset V_0$ and $\varphi(Z)\subset V_1$.
As in the proof of Theorem \ref{Specht problem for our superalgebras} we obtain immediately:

\begin{corollary}\label{weak PI}
For any bicommutative superalgebra $A$ and a ${\mathbb Z}_2$-graded vector subspace $V$ which generates $A$,
the weak polynomial super-identities of the pair $(A,V)$ follow from a finite number.
\end{corollary}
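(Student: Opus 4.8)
The plan is to obtain this exactly as Theorem~\ref{Specht problem for our superalgebras} was obtained, namely by reducing everything to Proposition~\ref{f.g.}. First I would observe that the set $T(A,V)$ of all weak polynomial super-identities of the pair $(A,V)$ is a two-sided ideal of $F({\mathfrak B}_{\text{\rm sup}})$: if $f(Y,Z)\in T(A,V)$, $g\in F({\mathfrak B}_{\text{\rm sup}})$ and $\varphi\colon F({\mathfrak B}_{\text{\rm sup}})\to A$ is any homomorphism with $\varphi(Y)\subseteq V_0$ and $\varphi(Z)\subseteq V_1$, then $\varphi(fg)=\varphi(f)\varphi(g)=0$ and likewise $\varphi(gf)=0$. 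Moreover $T(A,V)$ is invariant under every grading-preserving linear endomorphism $\psi$ of $F({\mathfrak B}_{\text{\rm sup}})$ (one sending each $y_i$ to a linear combination of the even generators and each $z_j$ to a linear combination of the odd generators), because $\varphi\circ\psi$ is again a homomorphism of the required type, so $\varphi(\psi(f))=0$ for every $f\in T(A,V)$.

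Next I would transport the picture to $G_{\text{\rm sup}}(X)$ using the isomorphism $F({\mathfrak B}_{\text{\rm sup}})\cong G_{\text{\rm sup}}(X)$ of Theorem~\ref{basis}. Under this identification $T(A,V)$ becomes a two-sided ideal of $G_{\text{\rm sup}}(X)$ which in particular is invariant under the semigroup $\Phi=\Phi_Y\times\Phi_Z$ of endomorphisms induced by the order-preserving maps of ${\mathbb N}$ acting separately on the even and on the odd variables, since these are particular grading-preserving linear substitutions. Thus $T(A,V)$ is a $\Phi$-ideal in the sense of Proposition~\ref{f.g.}, and that proposition provides finitely many elements $f_1,\ldots,f_n\in T(A,V)$ which generate it as a $\Phi$-ideal.

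Finally I would finish with a squeezing argument. The weak T-ideal generated by $f_1,\ldots,f_n$ contains the $\Phi$-ideal they generate, because $\Phi$-substitutions are particular grading-preserving linear substitutions; and it is contained in $T(A,V)$, since $f_1,\ldots,f_n\in T(A,V)$ and $T(A,V)$ is a two-sided ideal closed under all grading-preserving linear substitutions. Hence the $\Phi$-ideal generated by $f_1,\ldots,f_n$, the weak T-ideal generated by $f_1,\ldots,f_n$, and $T(A,V)$ all coincide, which is precisely the statement that the weak polynomial super-identities of $(A,V)$ follow from the finite set $\{f_1,\ldots,f_n\}$. I do not expect any genuine obstacle here: all the substantive work is already contained in Proposition~\ref{f.g.}, and the only point to check is that the $\Phi$-ideal closure of a set is contained in its weak-T-ideal closure, which is immediate.
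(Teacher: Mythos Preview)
Your proposal is correct and follows exactly the approach the paper intends: the paper simply states that the corollary is obtained ``as in the proof of Theorem~\ref{Specht problem for our superalgebras}'', and what you have written is precisely that argument spelled out, namely observing that $T(A,V)$ is a $\Phi$-ideal and invoking Proposition~\ref{f.g.}. The squeezing step you add at the end is the natural way to conclude and is implicit in the paper's one-line remark.
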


\section{Cocharacters}
In this section we shall assume that $K$ is a field of characteristic 0
and shall describe the cocharacter sequence of the variety of bicommutative superalgebras ${\mathfrak B}_{\text{\rm sup}}$.
We shall use the method of Giambruno \cite{Giambruno} for algebras with involution
which allows to give a relation between the cocharacter sequence and the Hilbert series of $F_{p,q}({\mathfrak B}_{\text{\rm sup}})$.
The method is based on similar considerations of Berele \cite{Berele} and Drensky \cite{Drensky1} for ordinary polynomial identities.
Recall that the irreducible characters of the symmetric group $S_n$ are indexed by partitions of $n$.
If $\lambda=(\lambda_1,\ldots,\lambda_n)\vdash n$ is a partition of $n$, then the corresponding irreducible character is $\chi_{\lambda}$.
The irreducible polynomial representations of the general linear group $GL_d(K)$ are also described
by partitions $\lambda=(\lambda_1,\ldots,\lambda_d)$ in not more than $d$ parts.
The role of the corresponding character is played by the Schur function $s_{\lambda}(T_d)$.
We shall also use the standard notation $\mu'$ for the conjugate partition of $\mu$.
In particular, if $\lambda,\mu,\lambda',\mu'$ are partitions in not more than $d$ parts and
\begin{equation}\label{conjugate partitions}
s_{\lambda}(T_d)s_{\mu}(T_d)=\sum_{\nu}m(\nu)s_{\nu}(T_d),\text{ then }s_{\lambda'}(T_d)s_{\mu'}(T_d)=\sum_{\nu}m(\nu)s_{\nu'}(T_d).
\end{equation}
For a background on the theory of symmetric functions we refer to Macdonald \cite{Macdonald} or Stanley \cite{Stanley}.
We shall use the Young rule for the multiplication of Schur functions $s_{\lambda}(T_d)s_{(n)}(T_d)$.

\begin{theorem}\label{Young rule}
If $\lambda=(\lambda_1,\ldots,\lambda_d)\vdash m$, then
\begin{equation}\label{Young rule 1}
s_{\lambda}(T_d)s_{(n)}(T_d)=\sum_{\mu}s_{\mu}(T_d),
\end{equation}
where $\mu=(\mu_1,\ldots,\mu_d)\vdash m+n$ and
$\mu_1\geq\lambda_1\geq\mu_2\geq\lambda_2\geq\cdots\geq\mu_d\geq\lambda_d$.
\end{theorem}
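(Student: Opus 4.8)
The plan is to recognise Theorem~\ref{Young rule} as the classical Pieri--Young rule and to prove it with the standard machinery of symmetric functions from \cite{Macdonald} and \cite{Stanley}. Since the partition $(n)$ consists of a single row, $s_{(n)}(T_d)$ coincides with the complete homogeneous symmetric polynomial $h_n(T_d)=\sum_{1\le i_1\le\cdots\le i_n\le d}t_{i_1}\cdots t_{i_n}$, so \eqref{Young rule 1} is precisely the expansion of $h_n(T_d)s_\lambda(T_d)$ in the Schur basis.

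The first step is to compute, for a partition $\mu\vdash m+n$ with at most $d$ parts, the coefficient of $s_\mu(T_d)$ in the product $h_n(T_d)s_\lambda(T_d)$. Using the Hall inner product $\langle\,\cdot\,,\,\cdot\,\rangle$ and the adjunction $\langle s_\lambda\cdot u,\,s_\mu\rangle=\langle u,\,s_{\mu/\lambda}\rangle$ that characterises the skew Schur function $s_{\mu/\lambda}$, this coefficient equals $\langle h_n,\,s_{\mu/\lambda}\rangle$. Expanding the skew Schur function in the monomial basis, $s_{\mu/\lambda}=\sum_{\alpha}K_{\mu/\lambda,\alpha}m_\alpha$, where $K_{\mu/\lambda,\alpha}$ counts the semistandard fillings of the skew diagram $\mu/\lambda$ with content $\alpha$, and invoking the duality $\langle h_\alpha,m_\beta\rangle=\delta_{\alpha\beta}$, the coefficient reduces to $K_{\mu/\lambda,(n)}$: the number of semistandard tableaux of shape $\mu/\lambda$ all of whose $n$ entries equal $1$.

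The second step, which is the only combinatorially substantive one, is the elementary observation that such a filling exists if and only if no column of $\mu/\lambda$ contains two cells --- otherwise two equal entries would violate strict increase down a column --- i.e.\ if and only if $\lambda\subseteq\mu$ and $\mu'_j-\lambda'_j\le 1$ for all $j$, in which case the filling is unique. Thus $K_{\mu/\lambda,(n)}\in\{0,1\}$, equal to $1$ exactly when $\mu/\lambda$ is a horizontal strip of size $n$; unwinding this condition gives precisely the interlacing inequalities $\mu_1\ge\lambda_1\ge\mu_2\ge\lambda_2\ge\cdots\ge\mu_d\ge\lambda_d$ together with $|\mu|=m+n$ from the statement.

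The main bookkeeping nuisance I anticipate is controlling the number of parts: a priori $\mu$ could acquire a $(d+1)$-st part, but then $s_\mu(T_d)=0$ in $d$ variables and the term disappears, so the sum is genuinely over the partitions $\mu=(\mu_1,\dots,\mu_d)$ displayed in \eqref{Young rule 1}. An alternative, inner-product-free route is to set up directly a weight-preserving bijection between pairs (a semistandard tableau of shape $\lambda$, a weakly increasing word of length $n$ in $\{1,\dots,d\}$) and semistandard tableaux of the shapes $\mu$ obtained from $\lambda$ by adjoining a horizontal $n$-strip; yet another is to argue through the bialternant formula $s_\lambda(T_d)=a_{\lambda+\delta}(T_d)/a_\delta(T_d)$ with $\delta=(d-1,\dots,1,0)$, expanding $a_\delta(T_d)h_n(T_d)$.
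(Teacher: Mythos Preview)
Your proof is correct and follows one of the standard derivations of the Pieri rule via the Hall inner product and skew Schur functions, exactly as developed in \cite{Macdonald} and \cite{Stanley}. Note, however, that the paper does not actually supply a proof of Theorem~\ref{Young rule}: it is stated as the classical Young rule, with the reader referred to \cite{Macdonald} and \cite{Stanley} for background, and is then used as a tool in the subsequent computation of the cocharacter sequence. So there is nothing in the paper to compare your argument against beyond those same references; what you have written is precisely the kind of proof one would extract from them, and the bookkeeping remark about a possible $(d+1)$-st part (with $s_\mu(T_d)=0$ in that case) is the right way to reconcile the infinite-variable identity with the finite-variable statement in \eqref{Young rule 1}.
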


A combination of Theorem \ref{Young rule} and (\ref{conjugate partitions}) gives a version of the Young rule for $s_{\lambda}(T_d)s_{(1^n)}(T_d)$:
\begin{equation}\label{Young rule 2}
s_{\lambda}(T_d)s_{(1^n)}(T_d)=\sum_{\mu}s_{\mu}(T_d),
\end{equation}
where $n\leq d$, $\mu=(\mu_1,\ldots,\mu_d)\vdash m+n$ and
$\lambda_i\leq\mu_i\leq\lambda_i+1$, $i=1,\ldots,d$.

\begin{corollary}\label{Young rule-special case}
Let $d\geq 2$. If $m\geq n$, then
\begin{equation}\label{Young rule-special case-1}
s_{(m)}(T_d)s_{(n)}(T_d)=\sum_{i=0}^ns_{(m+n-i,i)}(T_d).
\end{equation}
If $n\leq m\leq d$, then
\begin{equation}\label{Young rule-special case-2}
s_{(1^m)}(T_d)s_{(1^n)}(T_d)=s_{(m)'}(T_d)s_{(n)'}(T_d)=\sum_{j=0}^ns_{(m+n-j,j)'}(T_d).
\end{equation}
Here $s_{\mu}(T_d)=0$ if $\mu=(\mu_1,\ldots,\mu_{d'})$ and $\mu_{d+1}>0$.
\end{corollary}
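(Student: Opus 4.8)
The plan is to obtain both identities as special cases of the Young (Pieri) rule of Theorem \ref{Young rule}, by specialising the indexing partition to a single row, and then to pass from the first identity to the second via the conjugation symmetry recorded in (\ref{conjugate partitions}).

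First I would prove (\ref{Young rule-special case-1}) by applying Theorem \ref{Young rule} with $\lambda=(m)$, viewed as $(m,0,\ldots,0)$ with $d$ parts. The interlacing chain $\mu_1\geq\lambda_1\geq\mu_2\geq\lambda_2\geq\cdots\geq\mu_d\geq\lambda_d$ then reads $\mu_1\geq m\geq\mu_2\geq 0\geq\mu_3\geq\cdots\geq\mu_d\geq 0$, which forces $\mu_3=\cdots=\mu_d=0$ and $0\leq\mu_2\leq m$; combining this with $\mu\vdash m+n$, i.e. $\mu_1+\mu_2=m+n$, and with $\mu_1\geq m$ gives $\mu_2\leq n$, and since $m\geq n$ the bound $\mu_2\leq m$ is vacuous. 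Hence $\mu$ runs exactly over the two-row shapes $(m+n-i,i)$ with $0\leq i\leq n$, which is the point where the hypothesis $d\geq 2$ is used, and (\ref{Young rule-special case-1}) follows.

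For (\ref{Young rule-special case-2}) the equality $s_{(1^m)}(T_d)s_{(1^n)}(T_d)=s_{(m)'}(T_d)s_{(n)'}(T_d)$ is merely the identities $(m)'=(1^m)$, $(n)'=(1^n)$, valid because $m\leq d$ makes $(1^m)$ a partition in at most $d$ parts. I would then feed the expansion (\ref{Young rule-special case-1}) into (\ref{conjugate partitions}) with $\lambda=(m)$ and $\mu=(n)$: since $n\leq m\leq d$, all four partitions $\lambda,\mu,\lambda',\mu'$ have at most $d$ parts, so (\ref{conjugate partitions}) applies and produces $\sum_{j=0}^n s_{(m+n-j,j)'}(T_d)$, with the understanding that a summand is $0$ as soon as the conjugate shape has more than $d$ rows. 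Alternatively, one can expand $s_{(1^m)}(T_d)s_{(1^n)}(T_d)$ directly with the dual Young rule (\ref{Young rule 2}) and check that the shapes $(2^j,1^{m+n-2j})$ occurring there are precisely the conjugates $(m+n-j,j)'$. There is no genuine obstacle in this proof: it is routine Pieri bookkeeping, and the only point needing a little care is keeping track of when a conjugated partition exceeds $d$ rows and hence drops out, so that the stated range $0\leq j\leq n$ in (\ref{Young rule-special case-2}) is correct under the vanishing convention.
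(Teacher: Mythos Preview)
Your proposal is correct and matches the paper's approach: the paper states this corollary without proof, immediately after Theorem~\ref{Young rule} and the dual rule (\ref{Young rule 2}) obtained from (\ref{conjugate partitions}), so the intended argument is precisely the Pieri specialisation plus conjugation that you carry out. Your bookkeeping of the interlacing constraints and of the vanishing convention for conjugate shapes with more than $d$ rows is accurate.
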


If ${\mathfrak V}_{\text{\rm sup}}$ is a variety of superalgebras and $P_{p,q}({\mathfrak V}_{\text{\rm sup}})$
is the multilinear component of $F({\mathfrak V}_{\text{\rm sup}})$ in the variables $y_1,\ldots,y_p$ and $z_1,\ldots,z_q$,
then we denote by $\chi_{p,q}({\mathfrak V}_{\text{\rm sup}})$ the $S_p\times S_q$-cocharacter of $F({\mathfrak V}_{\text{\rm sup}})$. Then
\begin{equation}\label{cocharacters}
\chi_{p,q}({\mathfrak V}_{\text{\rm sup}})=\sum_{\lambda\vdash p\atop \mu\vdash q}m(\lambda,\mu)\chi_{\lambda}\otimes\chi_{\mu},\,m(\lambda,\mu)\in{\mathbb N}_0.
\end{equation}
Similarly, the Hilbert series $H(F_{p',q'}({\mathfrak V}_{\text{\rm sup}}),U_{p'},V_{q'})$ has the presentation
\begin{equation}\label{Hilbert series and Schur functions}
H(F_{p',q'}({\mathfrak V}_{\text{\rm sup}}),U_{p'},V_{q'})=\sum_{\lambda,\mu}m'(\lambda,\mu)s_{\lambda}(U_{p'})s_{\mu}(V_{q'}),\,m'(\lambda,\mu)\in{\mathbb N}_0
\end{equation}
where $\lambda$ and $\mu$ are partitions in not more
than $p'$ and $q'$ parts, respectively.

A reformulation in the language of super-identities of the following theorem of Giambruno \cite{Giambruno}
gives the relation between the decomposition of the cocharacters and the Hilbert series of varieties of superalgebras.

\begin{theorem}\label{theorem of Giambruno}
For any variety ${\mathfrak V}_{\text{\rm sup}}$ of superalgebras and for $p'\geq p$, $q'\geq q$
the multiplicities $m(\lambda,\mu)$ in {\rm (\ref{cocharacters})} and
$m'(\lambda,\mu)$ in {\rm (\ref{Hilbert series and Schur functions})} are equal.
\end{theorem}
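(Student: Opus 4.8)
The plan is to run the standard argument relating $S_n$-cocharacters to $GL_d$-characters (Hilbert series), now in the bigraded situation, exactly in the spirit of Giambruno's treatment \cite{Giambruno} of algebras with involution (which in turn follows Berele \cite{Berele} and Drensky \cite{Drensky1}). First I would equip the relatively free superalgebra $F_{p',q'}({\mathfrak V}_{\text{\rm sup}})$ with the natural action of the group $GL_{p'}(K)\times GL_{q'}(K)$: the first factor acts linearly on the span of the even free generators $y_1,\ldots,y_{p'}$, the second on the span of the odd ones $z_1,\ldots,z_{q'}$, and both actions extend to graded automorphisms of $F_{p',q'}({\mathfrak V}_{\text{\rm sup}})$ since the $T$-ideal of super-identities of ${\mathfrak V}_{\text{\rm sup}}$ is stable under all graded endomorphisms of the free superalgebra. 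For each fixed pair $(p,q)$ the multihomogeneous component $F_{p',q'}^{(p,q)}({\mathfrak V}_{\text{\rm sup}})$ of even degree $p$ and odd degree $q$ is then a finite-dimensional polynomial $GL_{p'}(K)\times GL_{q'}(K)$-module whose formal character, read off on the diagonal tori with eigenvalues $U_{p'}=(u_1,\ldots,u_{p'})$ and $V_{q'}=(v_1,\ldots,v_{q'})$, is precisely the bidegree-$(p,q)$ part of $H(F_{p',q'}({\mathfrak V}_{\text{\rm sup}}),U_{p'},V_{q'})$. Decomposing it into irreducibles identifies the coefficient $m'(\lambda,\mu)$ of $s_{\lambda}(U_{p'})s_{\mu}(V_{q'})$ in (\ref{Hilbert series and Schur functions}), with $\lambda\vdash p$ in at most $p'$ parts and $\mu\vdash q$ in at most $q'$ parts, with the multiplicity of the corresponding irreducible $GL_{p'}(K)\times GL_{q'}(K)$-module.

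Next I would extract the $S_p\times S_q$-module $P_{p,q}({\mathfrak V}_{\text{\rm sup}})$ from this picture. Since $p\leq p'$ and $q\leq q'$, the multilinear part of $F_{p',q'}^{(p,q)}({\mathfrak V}_{\text{\rm sup}})$ --- that is, the weight space for the weight $((1^p),(1^q))$, with $S_p\times S_q$ acting by permuting $y_1,\ldots,y_p$ and $z_1,\ldots,z_q$ --- is isomorphic to $P_{p,q}({\mathfrak V}_{\text{\rm sup}})$ as an $S_p\times S_q$-module; this is the usual fact that the multilinear consequences of the multilinear component of a relatively free algebra with sufficiently many variables recover the whole multilinear component. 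Applying a pair of Young symmetrizers $e_{T_\lambda}\otimes e_{T_\mu}$, for standard tableaux $T_\lambda,T_\mu$ of shapes $\lambda\vdash p$ and $\mu\vdash q$, to this space: on the one hand its dimension equals the multiplicity of $\chi_\lambda\otimes\chi_\mu$ in $P_{p,q}({\mathfrak V}_{\text{\rm sup}})$, i.e. the multiplicity $m(\lambda,\mu)$ of (\ref{cocharacters}); on the other hand, inside $F_{p',q'}^{(p,q)}({\mathfrak V}_{\text{\rm sup}})$ the same space is the space of $GL_{p'}(K)\times GL_{q'}(K)$ highest weight vectors of weight $(\lambda,\mu)$, whose dimension is the multiplicity $m'(\lambda,\mu)$ of (\ref{Hilbert series and Schur functions}). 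Hence $m(\lambda,\mu)=m'(\lambda,\mu)$, and the hypotheses $p'\geq p$, $q'\geq q$ are exactly what guarantee that every $\lambda\vdash p$ (automatically $\ell(\lambda)\leq p\leq p'$) and every $\mu\vdash q$ really occurs in (\ref{Hilbert series and Schur functions}) and is not suppressed by the bound on the number of parts.

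I expect the delicate part to be the passage between the two module structures: checking that the $GL_{p'}(K)\times GL_{q'}(K)$-action is well defined on $F_{p',q'}({\mathfrak V}_{\text{\rm sup}})$ and that the relevant modules are completely reducible (this is where characteristic $0$ is genuinely used, together with the reduction of a variety of superalgebras to its multilinear super-identities), and verifying carefully that the Young-symmetrizer image in $P_{p,q}({\mathfrak V}_{\text{\rm sup}})$ is canonically identified with the corresponding highest weight space in $F_{p',q'}^{(p,q)}({\mathfrak V}_{\text{\rm sup}})$, so that the two multiplicities are literally the same integer. Everything else is the formal dictionary between Schur functions and polynomial $GL$-characters together with Schur--Weyl duality, already recalled in this section.
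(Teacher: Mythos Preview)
The paper does not give a proof of this theorem at all: it is stated as a reformulation, in the language of super-identities, of Giambruno's result \cite{Giambruno} for algebras with involution (itself modelled on Berele \cite{Berele} and Drensky \cite{Drensky1}), and is simply quoted. So there is no ``paper's own proof'' to compare against; your proposal supplies an argument where the paper relies on the literature.

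That said, your outline is the right one and is essentially how the cited results are proved. One point deserves tightening. You write that the image of $e_{T_\lambda}\otimes e_{T_\mu}$ on $P_{p,q}({\mathfrak V}_{\text{\rm sup}})$ ``is the space of $GL_{p'}(K)\times GL_{q'}(K)$ highest weight vectors of weight $(\lambda,\mu)$''. These two spaces are not literally the same subspace of $F_{p',q'}^{(p,q)}({\mathfrak V}_{\text{\rm sup}})$; what is true is that their dimensions coincide. The cleanest way to see it is the one you almost state: since $p'\geq p$ and $q'\geq q$, Schur--Weyl duality identifies the weight-$((1^p),(1^q))$ space of the irreducible $GL_{p'}\times GL_{q'}$-module indexed by $(\lambda,\mu)$ with the Specht module $S^\lambda\otimes S^\mu$, so passing to the multilinear weight space converts the $GL_{p'}\times GL_{q'}$-decomposition $\bigoplus_{\lambda,\mu} m'(\lambda,\mu)\,W_\lambda\boxtimes W_\mu$ into the $S_p\times S_q$-decomposition $\bigoplus_{\lambda,\mu} m'(\lambda,\mu)\,S^\lambda\otimes S^\mu$, whence $m'(\lambda,\mu)=m(\lambda,\mu)$. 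The Young-symmetrizer step and the highest-weight-vector step then each compute this common multiplicity, but via different subspaces; you correctly flag this as the place where care is needed.
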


Now we are ready to compute the cocharacter sequence of the variety ${\mathfrak B}_{\text{\rm sup}}$.

\begin{theorem}\label{bicommutative supercocharacters}
The multiplicities $m(\lambda,\mu)$ in the cocharacter sequence of ${\mathfrak B}_{\text{\rm sup}}$ are
\[
m(\lambda,\mu)=\begin{cases}
(\lambda_1-\lambda_2+1)(\mu_1'-\mu_2'+1),\text{ if }\lambda=(\lambda_1,\lambda_2),\mu'=(\mu_1',\mu_2'),\lambda_2+\mu_2'>0,\\
(\lambda_1+1)(\mu_1'+1)-2,\text{ if }\vert\lambda\vert+\vert\mu\vert\geq 2,\lambda_2=\mu_2'=0,\\
1,\text{ if }\vert\lambda\vert+\vert\mu\vert=1,\\
0\text{ otherwise.}
\end{cases}
\]
\end{theorem}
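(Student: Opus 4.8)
The plan is to deduce the cocharacter from the Hilbert series using Theorem \ref{theorem of Giambruno}. Fix partitions $\lambda$ and $\mu$ and choose $p'\geq|\lambda|$, $q'\geq|\mu|$; then by Theorem \ref{theorem of Giambruno} the multiplicity $m(\lambda,\mu)$ in (\ref{cocharacters}) equals the multiplicity $m'(\lambda,\mu)$ of $s_\lambda(U_{p'})s_\mu(V_{q'})$ in the Schur expansion (\ref{Hilbert series and Schur functions}) of the Hilbert series (\ref{Hilbert multigraded}) of Theorem \ref{Hilbert series}. So the task reduces to writing (\ref{Hilbert multigraded}) as a nonnegative integer combination of products $s_\lambda(U_{p'})s_\mu(V_{q'})$, with $p',q'$ taken large enough that the truncation rule $s_\nu(T_d)=0$ for $\ell(\nu)>d$ never interferes.

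First I would expand the two factors inside the square of (\ref{Hilbert multigraded}) using the standard generating-function identities for one-row and one-column Schur functions: $\prod_{i=1}^{p'}(1-u_i)^{-1}=\sum_{a\geq0}s_{(a)}(U_{p'})$ and $\prod_{j=1}^{q'}(1+v_j)=\sum_{b\geq0}s_{(1^b)}(V_{q'})$, so that
\[
\prod_{i=1}^{p'}\frac{1}{1-u_i}\prod_{j=1}^{q'}(1+v_j)-1=\sum_{(a,b)\neq(0,0)}s_{(a)}(U_{p'})s_{(1^b)}(V_{q'}).
\]
Squaring and collecting terms, the contribution of the square to the coefficient of $s_\lambda(U_{p'})s_\mu(V_{q'})$ equals the number of ordered quadruples $(a_1,a_2,b_1,b_2)$ with $(a_i,b_i)\neq(0,0)$ for $i=1,2$ such that $s_\lambda$ occurs in $s_{(a_1)}s_{(a_2)}$ and $s_\mu$ occurs in $s_{(1^{b_1})}s_{(1^{b_2})}$ (both products are multiplicity free by Corollary \ref{Young rule-special case}). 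Separately, the linear summand $\sum_iu_i+\sum_jv_j=s_{(1)}(U_{p'})+s_{(1)}(V_{q'})$ of (\ref{Hilbert multigraded}) contributes $1$ precisely when $|\lambda|+|\mu|=1$ and nothing otherwise.

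Next I would apply the Young rule via Corollary \ref{Young rule-special case}: $s_\lambda$ occurs in $s_{(a_1)}s_{(a_2)}$ if and only if $\lambda=(\lambda_1,\lambda_2)$ has at most two parts, $a_1+a_2=\lambda_1+\lambda_2$ and $\lambda_2\leq a_1,a_2$, and there are exactly $\lambda_1-\lambda_2+1$ such ordered pairs $(a_1,a_2)$. Passing to conjugate partitions through (\ref{conjugate partitions}), $s_\mu$ occurs in $s_{(1^{b_1})}s_{(1^{b_2})}$ if and only if $\mu'=(\mu_1',\mu_2')$ has at most two parts, $b_1+b_2=\mu_1'+\mu_2'$ and $\mu_2'\leq b_1,b_2$, with $\mu_1'-\mu_2'+1$ such ordered pairs. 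In particular $m(\lambda,\mu)=0$ unless $\lambda$ has at most two parts and $\mu$ at most two columns (the ``otherwise'' case), and if we ignored the constraint $(a_i,b_i)\neq(0,0)$ the quadruple count would simply be $(\lambda_1-\lambda_2+1)(\mu_1'-\mu_2'+1)$.

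Finally I would account for the degenerate quadruples. If $\lambda_2>0$ then every admissible $a_i\geq\lambda_2>0$, and if $\mu_2'>0$ then every admissible $b_i\geq\mu_2'>0$; hence when $\lambda_2+\mu_2'>0$ no quadruple is excluded, and since then $|\lambda|+|\mu|\geq2$ the linear term adds nothing, giving $m(\lambda,\mu)=(\lambda_1-\lambda_2+1)(\mu_1'-\mu_2'+1)$. If $\lambda_2=\mu_2'=0$ and $|\lambda|+|\mu|\geq2$, the only excluded quadruples are $(0,\lambda_1,0,\mu_1')$ and $(\lambda_1,0,\mu_1',0)$, which are distinct, so the square contributes $(\lambda_1+1)(\mu_1'+1)-2$ and again the linear term contributes nothing. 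If $|\lambda|+|\mu|=1$ the two quadruples produced by the square are both excluded, so the square gives $0$ and the linear term gives $1$. Assembling the three cases (together with the vanishing case) yields the asserted formula. The main obstacle is precisely this last bookkeeping of the excluded quadruples and the interplay with the linear term; everything else is a direct substitution into Theorems \ref{Hilbert series}, \ref{theorem of Giambruno} and Corollary \ref{Young rule-special case}.
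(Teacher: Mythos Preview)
Your proposal is correct and follows essentially the same route as the paper: both identify the factors $\prod(1-u_i)^{-1}$ and $\prod(1+v_j)$ in (\ref{Hilbert multigraded}) with $\sum_a s_{(a)}$ and $\sum_b s_{(1^b)}$, then decompose the square via Corollary \ref{Young rule-special case} and invoke Theorem \ref{theorem of Giambruno}. The only cosmetic difference is that the paper expands $(A-1)^2=A^2-2A+1$ algebraically and reads off the $-2$ correction from the $-2A$ term, whereas you obtain the same correction by directly counting the excluded quadruples $(a_i,b_i)=(0,0)$; the two viewpoints are equivalent.
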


\begin{proof}
The Schur functions $s_{(m)}(U_p)$ and $s_{(1^n)}(V_q)$ are
\[
s_{(m)}(U_p)=\sum u_{i_1}\cdots u_{i_m},\,1\leq i_1\leq\cdots\leq i_m\leq p,
\]
\[
s_{(1^n)}(V_q)=s_{(n)'}(V_q)=\sum v_{j_1}\cdots v_{j_n},\,1\leq j_1<\cdots<j_n\leq q.
\]
Hence (\ref{Hilbert multigraded}) in Theorem \ref{Hilbert series} becomes
\[
H(F_{p,q}({\mathfrak B}_{\text{\rm sup}}),U_p,V_q)=s_{(1)}(U_p)+s_{(1)}(V_q)+\left(\sum_{m\geq 0}s_{(m)}(U_p)\sum_{n=0}^{q}s_{(n)'}(V_q)-1\right)^2.
\]
Let
\[
\left(\sum_{m\geq 0}s_{(m)}(U_p)\sum_{n=0}^qs_{(n)'}(V_q)\right)^2=\sum_{\lambda,\mu}m_1(\lambda,\mu)s_{\lambda}(U_p)s_{\mu}(V_q).
\]
By (\ref{Young rule-special case-1}) and (\ref{Young rule-special case-2}) if $m_1(\lambda,\mu)\not=0$ then
$\lambda=(\lambda_1,\lambda_2)$ and $\mu'=(\mu_1',\mu_2')$ and if $\vert\lambda\vert+\vert\mu\vert\geq 2$, then
\[
m(\lambda,\mu)=\begin{cases}
m_1(\lambda,\mu),\text{ if }\lambda_2>0\text{ or }\mu_2'>0,\\
m_1(\lambda,\mu)-2,\text{ if }\lambda_2=\mu_2'=0.
\end{cases}
\]
By (\ref{Young rule-special case-1}) the Schur function $s_{(\lambda_1,\lambda_2)}(U_p)$ participates in the decomposition of the product
$s_{(m)}(U_p)s_{(n)}(U_p)$ for $m+n=\lambda_1+\lambda_2$ and $m=\lambda_2,\lambda_2+1,\ldots,\lambda_1$, i.e. in $\lambda_1-\lambda_2+1$ products.
Similarly, by (\ref{Young rule-special case-2}) and for sufficiently large $q'$
the Schur function $s_{(\mu_1,\mu_2)'}(V_{q'})$ participates in the decomposition of the product $s_{(m)'}(V_{q'})s_{(n)'}(V_{q'})$
for $\mu\vdash m+n$ and $m=\mu_2',\mu_2+1',\ldots,\mu_1'$, i.e. in $\mu_1'-\mu_2'+1$ products.
Hence
\[
m_1(\lambda,\mu)=(\lambda_1-\lambda_2+1)(\mu_1'-\mu_2'+1)\text{ for }\lambda=(\lambda_1,\lambda_2),\mu'=(\mu_1',\mu_2'),
\]
and this gives the result for $\lambda_2+\mu_2'>0$. For $\lambda=(\lambda_1)$ and
$\mu'=(\mu_1')$ we obtain
\[
m(\lambda,\mu)=(\lambda_1+1)(\mu_1'+1)-2
\]
and this completes the proof.
\end{proof}

For a symmetric function
\[
f(T_d)=\sum_{\lambda}m(\lambda)s_{\lambda}(T_d)
\]
Drensky and Genov \cite{Drensky-Genov} introduced {\it the multiplicity series}
\[
M(f,T_d)=\sum_{\lambda}m(\lambda)T_d^{\lambda}=\sum_{\lambda}m(\lambda)t_1^{\lambda_1}\cdots t_d^{\lambda_d}.
\]
See also \cite{Benanti et al} for applications of the multiplicity series to invariant theory and PI-algebras.
Berele \cite{Berele} considered double Hilbert series instead of Hilbert series
when the number of parts in the partitions with nonzero multiplicities is not bounded.
Combining his idea with the approach in \cite{Drensky-Genov} and \cite{Benanti et al}
we introduce the {\it double multiplicity series} $M({\mathfrak V}_{\text{\rm sup}},U_p,V_q)$
of the variety ${\mathfrak V}_{\text{\rm sup}}$ of superalgebras.
If the multiplicities $m(\lambda,\mu)$ of the cocharacter sequence of ${\mathfrak V}_{\text{\rm sup}}$
are different from 0 for $\lambda=(\lambda_1,\ldots,\lambda_p)$ and
$\mu'=(\mu_1',\ldots,\mu_q')$ only, then
\[
M({\mathfrak V}_{\text{\rm sup}},U_p,V_q)=\sum_{\lambda,\mu}m(\lambda,\mu)u_1^{\lambda_1}\cdots u_p^{\lambda_p}v_1^{\mu_1'}\cdots v_q^{\mu_q'}.
\]

Now we can restate Theorem \ref{bicommutative supercocharacters} in a more compact way.

\begin{corollary}
The double multiplicity series of the variety of bicommutative superalgebras is
\[
\begin{split}
M({\mathfrak B}_{\text{\rm sup}},U_2,V_2)=u_1+v_1&+\frac{1}{(1-u_1)^2(1-u_1u_2)(1-v_1)^2(1-v_1v_2)}\\
&-\frac{2}{(1-u_1)(1-v_1)}+1.
\end{split}
\]
\end{corollary}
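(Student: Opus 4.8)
The plan is to substitute the explicit multiplicities of Theorem~\ref{bicommutative supercocharacters} into the definition of the double multiplicity series and to evaluate the resulting sum of geometric-type series. By that theorem the multiplicities $m(\lambda,\mu)$ are nonzero only when $\lambda=(\lambda_1,\lambda_2)$ has at most two parts and the conjugate partition $\mu'=(\mu_1',\mu_2')$ has at most two parts, so $M({\mathfrak B}_{\text{\rm sup}},U_2,V_2)$ is well defined and equals
\[
M({\mathfrak B}_{\text{\rm sup}},U_2,V_2)=\sum_{\lambda_1\geq\lambda_2\geq0}\ \sum_{\mu_1'\geq\mu_2'\geq0}m(\lambda,\mu)\,u_1^{\lambda_1}u_2^{\lambda_2}v_1^{\mu_1'}v_2^{\mu_2'}.
\]

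First I would compute the \emph{generic} contribution obtained by replacing $m(\lambda,\mu)$ everywhere by the value $(\lambda_1-\lambda_2+1)(\mu_1'-\mu_2'+1)$ of the first case of Theorem~\ref{bicommutative supercocharacters}. This contribution factors as a product of two sums of the same shape, and each one is evaluated from the elementary identity
\[
\sum_{a\geq b\geq0}(a-b+1)x^ay^b=\sum_{b\geq0}(xy)^b\sum_{k\geq0}(k+1)x^k=\frac{1}{(1-x)^2(1-xy)}
\]
(obtained by setting $a=b+k$). Hence the generic contribution is
\[
\frac{1}{(1-u_1)^2(1-u_1u_2)}\cdot\frac{1}{(1-v_1)^2(1-v_1v_2)}.
\]

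Next I would add the correction, i.e.\ the sum of $m(\lambda,\mu)$ minus the generic value. By Theorem~\ref{bicommutative supercocharacters} the two values coincide as soon as $\lambda_2+\mu_2'>0$, so the correction is supported on the monomials $u_1^{\lambda_1}v_1^{\mu_1'}$ coming from $\lambda_2=\mu_2'=0$. On such a term the generic value is $(\lambda_1+1)(\mu_1'+1)$, while the true multiplicity is $(\lambda_1+1)(\mu_1'+1)-2$ for $\lambda_1+\mu_1'\geq2$, equals $1$ for $\lambda_1+\mu_1'=1$, and equals $0$ for $\lambda_1=\mu_1'=0$. Thus in each such case the true value is the generic value diminished by $2$, with a further $+1$ restored exactly on the three monomials $1$, $u_1$, $v_1$. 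Since $\sum_{\lambda_1,\mu_1'\geq0}u_1^{\lambda_1}v_1^{\mu_1'}=\frac{1}{(1-u_1)(1-v_1)}$, the correction equals $-\frac{2}{(1-u_1)(1-v_1)}+(1+u_1+v_1)$, and adding it to the generic contribution yields precisely the claimed expression for $M({\mathfrak B}_{\text{\rm sup}},U_2,V_2)$.

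The only point requiring care is the bookkeeping in the last step: one must split the locus $\lambda_2=\mu_2'=0$ according to whether $\lambda_1+\mu_1'$ is $\geq2$, equal to $1$, or $0$, and verify that the discrepancy from the generic term is uniformly $-2$ except on the three small monomials, where it is $-1$. Everything else reduces to summing standard geometric series.
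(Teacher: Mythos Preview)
Your proof is correct and follows essentially the same approach as the paper: both identify $\displaystyle\frac{1}{(1-u_1)^2(1-u_1u_2)}=\sum_{\lambda_1\geq\lambda_2\geq0}(\lambda_1-\lambda_2+1)u_1^{\lambda_1}u_2^{\lambda_2}$ (and the analogous $v$-identity) and then apply Theorem~\ref{bicommutative supercocharacters}. Your treatment of the correction terms on the locus $\lambda_2=\mu_2'=0$ is more explicit than the paper's, but the argument is the same.
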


\begin{proof}
Since
\[
\frac{1}{(1-u_1)^2(1-u_1u_2)}=\sum_{(\lambda_1,\lambda_2)}(\lambda_1-\lambda_2+1)u_1^{\lambda_1}u_2^{\lambda_2}
\]
we obtain that this is the multiplicity series of the symmetric function
\[
\sum_{(\lambda_1,\lambda_2)}(\lambda_1-\lambda_2+1)s_{(\lambda_1,\lambda_2)}(u_1,u_2).
\]
With similar arguments the expression
\[
\frac{1}{(1-v_1)^2(1-v_1v_2)}=\sum_{(\mu_1',\mu_2')}(\mu_1'-\mu_2'+1)v_1^{\mu_1'}v_2^{\mu_2'}
\]
corresponds to the multiplicity of the $S_q$-character $\chi_{\mu}$ for $\mu'=(\mu_1',\mu_2')\vdash q$.
We complete the proof applying Theorem \ref{bicommutative supercocharacters}.
\end{proof}


\begin{thebibliography}{50}

\bibitem{Atiyah-Macdonald}
M.F. Atiyah, I.G. Macdonald,
{\em Introduction to Commutative Algebra},
Addison-Wesley, Reading, Mass. 1969.

\bibitem{Bai-Chen-Zhang2022}
Y. Bai, Y. Chen, Z. Zhang,
{\em Gelfand-Kirillov dimension of bicommutative algebras},
Linear Multilinear Algebra 70(22) (2022), 7623-7649.

\bibitem{Benanti et al}
F. Benanti, S. Boumova, V. Drensky, G.K. Genov, P. Koev,
{\em Computing with rational symmetric functions and applications to invariant theory and PI-algebras},
Serdica Math. J. 38(1-3) (2012), 137-188.

\bibitem{Berele}
A. Berele,
{\em Properties of hook Schur functions with applications to p.i. algebras},
Adv. Appl. Math. 41(1) (2008), 52-75.

\bibitem{Bergman}
G.M. Bergman,
{\em The diamond lemma for ring theory},
Adv. Math. 29 (1978), 178-218.

\bibitem{Bokut1999}
L.A. Bokut, S.-J. Kang, K.-H. Lee, P. Malcolmson,
{\em Gr\"{o}bner-Shirshov bases for Lie superalgebras and their universal enveloping algebras},
J. Algebra 217(2) (1999), 461-495. 

\bibitem{Bokut-Kolesnikov}
L.A. Bokut, P.S. Kolesnikov,
{\it Gr\"obner-Shirshov bases: from their inception to the present time} (Russian),
Zap. Nauchn. Semin. POMI 272 (2000), 26-67.
English translation: J. Math. Sci., New York 116(1) (2003), 2894-2916.

\bibitem{Bryant-Vaughan-Lee}
R.M. Bryant, M.R. Vaughan-Lee,
{\em Soluble varieties of Lie algebras},
Quart. J. Math., Oxford Ser. (2) 23 (1972), 107-112.

\bibitem{Buchberger}
B. Buchberger,
{\em Ein Algorithmus zum Auffinden der Basiselemente des Restklassenringes nach einem nulldimensionalen Polynomideal},
Univ. Innsbruck, Mathematisches Institut (Diss.), 1965.

\bibitem{Burde1}
D. Burde, K. Dekimpe, S. Deschamps,
{\em Affine actions on nilpotent Lie groups},
Forum Math. 21(5) (2009), 921-934.

\bibitem{Burde2}
D. Burde, K. Dekimpe, S. Deschamps,
{\em  LR-algebras},
in ``New Developments in Lie Theory and Geometry'',
Amer. Math. Soc., Providence, RI, Contemporary Mathematics 491 (2009), 125-140.

\bibitem{Burde3}
D. Burde, K. Dekimpe, K. Vercammen,
{\em Complete LR-algebras on solvable Lie algebras},
J. Group Theory 13(5) (2010), 703-719.

\bibitem{Cohen}
D.E. Cohen,
{\em On the laws of a metabelian variety},
J. Algebra 5 (1967), 267-273.

\bibitem{Dickson}
L.E. Dickson,
{\em Finiteness of the odd perfect and primitive abundant numbers with n distinct prime factors},
Amer. J. Math. 35 (1913), 413-422.

\bibitem{Drensky1}
V. Drensky,
{\em Representations of the symmetric group and varieties of linear algebras} (Russian),
Mat. Sb. 115(1) (1981), 98-115.
English translation: Math. USSR Sb. 43(1) (1981), 85-101.

\bibitem{Drensky2}
V. Drensky,
{\em Varieties of bicommutative algebras},
Serdica Math. J. 45(2) (2019), 167-188.

\bibitem{Drensky-Genov}
V. Drensky, G.K. Genov,
{\em Multiplicities of Schur functions in invariants of two $3\times 3$ matrices},
J. Algebra 264(2) (2003), 496-519.

\bibitem{Drensky-La Scala}
V. Drensky, R. La Scala,
{\em Gr\"obner bases of ideals invariant under endomorphisms},
J. Symb. Comput. 41(7) (2006), 835-846.

\bibitem{Drensky-Zhakhaev}
V. Drensky, B.K. Zhakhayev,
{\em Noetherianity and Specht problem for varieties of bicommutative algebras},
J. Algebra 499 (2018), 570-582. 

\bibitem{Dzhumadil'daev-Ismailov2018}
A.S. Dzhumadil'daev, N.A. Ismailov,
{\em  Polynomial identities of bicommutative algebras, Lie and Jordan elements},
Commun. Algebra 46(12) (2018), 5242-5252.

\bibitem{Dzh-Ism-Tul}
A.S. Dzhumadil'daev, N.A. Ismailov, K.M. Tulenbaev,
{\em  Free bicommutative algebras},
Serdica Math. J. 37(1) (2011), 25-44.

\bibitem{Dzhumadil'daev-Tulenbaev}
A.S. Dzhumadil'daev, K.M. Tulenbaev,
{\em Bicommutative algebras} (Russian),
Usp. Mat. Nauk 58(6) (2003), 149-150.
English translation: Russ. Math. Surv. 58(6) (2003), 1196-1197. 

\bibitem{Giambruno}
A. Giambruno,
{\em $\text{\rm GL}\times\text{\rm GL}$-representations and $\ast$-polynomial identities},
Commun. Algebra 14(5) (1986), 787-796.

\bibitem{Gunter-1}
N.M. G\"unter,
{\em \"Uber einige Zusammenh\"ange zwischen den homogenen Gleichungen} (Russian),
Sbornik Inst. Inzh. Putej Soobshct. (Samml. des Inst. der Verkehrwege) 84 (1913), 1-20.

\bibitem{Gunter-2}
N. Gunther,
{\em Sur les modules des formes alg\'ebriques},
Travaux de L'Institut Math. de Tbilissi 9 (1941), 97-206.

\bibitem{Higman}
G. Higman,
{\em Ordering by divisibility in abstract algebras},
Proc. Lond. Math. Soc., III. Ser. 2 (1952), 326-336.

\bibitem{Hironaka}
H. Hironaka,
{\em Resolution of singularities of an algebraic variety over a field of characteristic zero, I, II},
Ann. Math. (Second Ser.) 79 (1964), 109-203, 205-326.

\bibitem{Kaygorodov-Voronin2020}
I. Kaygorodov, P. P\'aez-Guill\'an, V. Voronin,
{\em The algebraic and geometric classification of nilpotent bicommutative algebras},
Algebr. Represent. Theory 23(6) (2020), 2331-2347. 

\bibitem{Kaygorodov-Voronin2021}
I. Kaygorodov, P. P\'aez-Guill\'an, V. Voronin,
{\em One-generated nilpotent bicommutative algebras},
Algebra Colloq.  29(3) (2022), 453-474. 

\bibitem{Kaygorodov-Volkov2019}
I. Kaygorodov, Yu. Volkov,
{\em The variety of two-dimensional algebras over an algebraically closed field},
Can. J. Math. 71(4) (2019), 819-842.

\bibitem{Krause-Lenagan}
G.R. Krause, T.H. Lenagan,
{\em Growth of Algebras and Gelfand-Kirillov Dimension. Revised ed.},
Graduate Studies in Mathematics 22, Amer. Math. Soc., Providence, RI, 2000.

\bibitem{Macdonald}
I.G. Macdonald,
{\em Symmetric Functions and Hall Polynomials}, 2nd ed.
Clarendon Press, Oxford, 1995.

\bibitem{Ra1}
Yu.P. Razmyslov,
{\em Finite basing of the identities of a matrix algebra of second order over a field of characteristic zero} (Russian),
Algebra i Logika 12 (1973), 83-113.
English translation: Algebra and Logic 12 (1973), 47-63.

\bibitem{Ra2}
Yu.P. Razmyslov,
{\em On a problem of Kaplansky} (Russian),
Izv. Akad. Nauk SSSR, Ser. Mat. 37 (1973), 483-501.
English translation: Math. USSR, Izv. 7 (1973), 479-496.

\bibitem{Shestakov2003}
I.P. Shestakov,
{\em Free Malcev superalgebra on one odd generator},
J. Algebra Appl. 2(4) (2003), 451-461. 

\bibitem{Shestakov-Zhang2021}
I. Shestakov, Z. Zhang,
{\em Automorphisms of finitely generated relatively free bicommutative algebras},
J. Pure Appl. Algebra 225(8) (2021), 106636: 19 pp. 

\bibitem{Shestakov-Zhukavets2007}
I. Shestakov, N. Zhukavets,
{\em The free alternative superalgebra on one odd generator},
Int. J. Algebra Comput. 17(5-6) (2007), 1215-1247. 

\bibitem{Shirshov}
A.I. Shirshov,
{\em Some algorithm problems for Lie algebras} (Russian),
Sibirsk. Mat. Zh. 3 (1962), 292-296.

\bibitem{Shtern1986}
A.S. Shtern,
{\em Free Lie superalgebras},
Sib. Mat. Zh. 27(1) (1986), 170-174.
English translation: Sib. Math. J. 27(1) (1986), 136-140.

\bibitem{Stanley}
R.P. Stanley,
{\em Enumerative Combinatorics}, Volume 2,
Cambridge Studies in Advanced Mathematics 62, Cambridge University Press, Cambridge, 2001.

\bibitem{Ufnarovskij}
V.A. Ufnarovskij,
{\em Combinatorial and asymptotic methods in algebra} (Russian),
Itogi Nauki Tekh., Ser. Sovrem. Probl. Mat., Fundam. Napravleniya 57 (1990), 5-177.
English translation: Algebra VI. Encycl. Math. Sci. 57 (1995), 1-196.

\bibitem{Varadarajan}
V.S. Varadarajan,
{\em Supersymmetry for Mathematicians: An Introduction},
Courant Lecture Notes in Mathematics 11. Amer. Math. Soc., Providence, RI, 2004.

\bibitem{Vasilieva-Mikhalev1996}
E.A. Vasilieva, A.A. Mikhalev,
{\em Free left-symmetric superalgebras} (Russian),
Fundam. Prikl. Mat. 2(2) (1996), 611-613.

\bibitem{Zhang-Chen-Bokut2019}
Z. Zhang, Y. Chen, L.A. Bokut,
{\em Free Gelfand-Dorfman-Novikov superalgebras and a Poincar\'e-Birkhoff-Witt type theorem},
Int. J. Algebra Comput. 29(3) (2019), 481-505. 

\end{thebibliography}
\end{document}